\def\cal{\mathcal}
\newcommand{\RN}[1]{%
	\textup{\uppercase\expandafter{\romannumeral#1}}%
}
\newtheorem{Theorem}{Theorem}[section]
\newtheorem{Proposition}{Proposition}[section]
\newtheorem{Lemma}{Lemma}[section]
\newtheorem{Corollary}{Corollary}[section]
\theoremstyle{definition}
\newtheorem{Definition}{Definition}[section]
\newtheorem{Remark}{Remark}
\def\R{{\mathbb{R}}}
\def\cU{{\mathcal{U}}}
\def\t\cU{{\widetilde{{\mathcal{U}}}}}
\def\Ho{{H^2_{a,0}(0,1)}}
\def\H2{{H^2_{a}(0,1)}}
\newcommand\norm[1]{\left\lVert#1\right\rVert}
\def\ds{\displaystyle}
\title {New results on controllability and stability for degenerate Euler-Bernoulli type equations}
\author{{{\sc Alessandro Camasta}\thanks{The author is a member of the  {\it Gruppo Nazionale per l'Analisi Ma\-te\-matica, la Probabilit\`a e le loro Applicazioni (GNAMPA)} of the Istituto Nazionale di Alta Matematica (INdAM), a member of {\it UMI ``Modellistica Socio-Epidemiologica (MSE)''} and he is partially supported  by  the GNAMPA project 2023 {\it Modelli differenziali per l'evoluzione del clima e i suoi impatti}. He is also a member of the project {\it Mathematical models for interacting dynamics on networks (MAT-DYN-NET)
				CA18232.}}}\\
	Department of Mathematics\\ University of Bari Aldo Moro\\
	Via
	E. Orabona 4\\ 70125 Bari - Italy\\ e-mail: alessandro.camasta@uniba.it\\
	{\sc Genni Fragnelli}\thanks{The author is a member of the  {\it Gruppo Nazionale per l'Analisi Ma\-te\-matica, la Probabilit\`a e le loro Applicazioni (GNAMPA)} of the Istituto Nazionale di Alta Matematica (INdAM), a member of {\it UMI ``Modellistica Socio-Epidemiologica (MSE)''} and is supported by FFABR {\it Fondo per il finanziamento delle attivit\`a base di ricerca} 2017 and by  the GNAMPA project 2023 {\it Modelli differenziali per l'evoluzione del clima e i suoi impatti}.}\\
	Department of Ecology and Biology\\ Tuscia University\\
	Largo dell'Universit\`a, 01100 Viterbo - Italy\\ e-mail: genni.fragnelli@unitus.it}
\date{}
\begin{document}
	
	\maketitle

	\vspace{0.3cm}
	
	\centerline{ {\it  }}
	
	\begin{abstract}
	In this paper we study the controllability and the stability for a degenerate beam equation in divergence form via the energy method. The equation is  clamped at the left end
and controlled by applying a shearing
force  or a damping at the right end. 
	\end{abstract}
	
	\noindent Keywords: degenerate beam equation, fourth order operator, boundary observability,
null controllability, stabilization, exponential decay.
	
	\noindent 2000AMS Subject Classification: 35L10, 35L80, 93B05, 93B07, 93D23, 93D15

	\section{Introduction}
	Starting from a paper of G. Leugering \cite{leugering1990}, we consider the following viscoelastic beam of length $1$:
	\begin{equation}\label{E1}
	y_{tt}(t,x)+(a(x)y_{xx}(t,x))_{xx}=0, \quad (t,x) \in (0, +\infty) \times (0,1)
	\end{equation}
	which is clamped at the left end, $x=0$,
and controlled by applying a shearing
force  or a damping at the right end, $x = 1$. In particular, at $x=1$, we consider the following conditions
\begin{equation}\label{E2}
y(t,1)=0 \quad \text{and} \quad y_x(t,1)= f(t), 
\end{equation} where $f\in L^2_{loc}[0,+\infty)$, or
\begin{equation}\label{E3}\begin{cases}
	\beta y(t,1)-(ay_{xx})_x(t,1)+y_t(t,1)=0, \\
		\gamma y_x(t,1)+(ay_{xx})(t,1)+y_{tx}(t,1)=0, \\
	\end{cases}
	\end{equation}
	for all $t\in (0,T)$, $T>0$, where $\beta$ and $\gamma$ are non negative constants.
The novelty of this paper is that the function $a:[0,1]\to\mathbb{R}$ degenerates at the extreme point $x=0$; in particular, we consider two cases: the weakly degenerate case and the strongly degenerate one. The degeneracy of our problem is quantified by a suitable real parameter that belongs to the interval $(0,2)$ for technical reasons. More precisely, we have the following definitions:
	\begin{Definition}\label{Def1}
		A function $a$ is {\it weakly degenerate at $0$}, $(WD)$ for short, if $a\in\mathcal{C}[0,1]\cap\mathcal{C}^1(0,1]$, $a(0)=0$, $a>0$ on $(0,1]$ and  if
		\begin{equation}\label{sup}
			K:=\sup_{x\in (0,1]}\frac{x|a'(x)|}{a(x)},
		\end{equation}
		then $K\in (0,1)$.
	\end{Definition}
	\begin{Definition}\label{Def2}
		A function $a$ is {\it strongly degenerate at $0$}, $(SD)$ for short,  if $a\in\mathcal{C}^1[0,1]$, $a(0)=0$, $a>0$ on $(0,1]$ and in \eqref{sup} we have $K\in [1,2)$.
	\end{Definition} 
Due to the degeneracy at $x=0$, the natural boundary conditions at $x=0$ are
	 \begin{equation}\label{E4}
	 y(t,0)=0\quad  \text{ and } \quad \begin{cases}
			y_x(t,0)=0, &\text{ if } a \text{ is (WD)},\\
			(ay_{xx})(t,0)=0, &\text{ if } a \text{ is (SD)},\\ 
			\end{cases}
	 \end{equation}
for all $t \in (0, T),$ $T>0$. Hence, in the present paper we study two problems:
the first one is the controllability, i.e. we want to find  conditions such that there exists a control $f\in L^2(0,T)$, where $T$ is sufficiently large, so that the solution of \eqref{E1}, \eqref{E2}, \eqref{E4} with initial conditions 
\begin{equation}\label{IC}
y(0,x)=y_0(x)\quad \text{ and } \quad y_t(0,x)=y_1(x),
\end{equation}
$x \in (0,1)$, satisfies
		\[
			y(T,x)=y_t(T,x)=0\,\,\,\,\,\,\forall\; x\in (0,1).
		\]
		The second problem is the stability, i.e. we want to find conditions such that the solution of \eqref{E1}, \eqref{E3}, \eqref{E4} and \eqref{IC}
		satisfies
		\[
		\lim_{t \rightarrow +\infty}E_y(t) =0,
		\]
		where $E_y$ is the energy associated to the problem.

Problems similar to \eqref{E1} are considered in several papers and are studied from a lot of point of views, see, for example, \cite{shubov}. However, in a lot of cases the equation is in non divergence form (see, for example, \cite{behn}, \cite{biselli}, \cite{chen1}, \cite{chen2}, \cite{chen}, \cite{coleman}, \cite{sandilo}).
In particular, in \cite{chen1} and \cite{chen} the following Euler-Bernoulli beam equation is considered
\[
my_{tt}(t,x) + EIy_{xxxx}(t,x)=0, \quad x \in (0,1), \;t>0,
\]
with clamped conditions  \eqref{E2} at the left end $x=0$ 
and with dissipative conditions at the right end
\begin{equation}\label{shear}
\begin{cases}
 -EIy_{xxx}(t,1)+\mu_1y_t(t,1)=0, & \mu_1 \ge0,\\
EIy_{xx}(t,1)+\mu_2y_{tx}(t,1)=0, & \mu_2 \ge0.
\end{cases}
\end{equation}
Here  $m$ is the mass density per unit length and
$EI$ is the flexural rigidity coefficient. Moreover,  the following variables have engineering meanings:
$y$ is the vertical displacement, $y_t$ is the velocity, $y_x$ is the rotation, $y_{tx}$ is the angular velocity, $-EIy_{xx}$ is the bending moment and $-EIy_{xxx}$ is the shear.  In particular, the boundary conditions \eqref{shear}  mean that the shear is proportional to the velocity and the bending moment is negatively proportional to the angular moment.

However, in all the previous papers the equation is {\it non degenerate}. The first results on controllability and stabilization for a {\it degenerate} beam equation but in {\it non divergence form} can be found in \cite{CF_Beam} and \cite{CF_Beam_Stability}, respectively.  Similar results for degenerate {\it wave equations}  can be found in \cite{ACL} (see also the arxiv version published in 2015), for the divergence case, and \cite{BFM} and \cite{Stability_Genni_Dimitri}, for the non divergence one.
To our knowledge, this is the first paper where the equation {\it degenerates} in a point of the space domain and it is in {\it divergence form}. Clearly,  controllability and stability results for a problem in divergence form can also be obtained from the ones of the problem in non divergence form, simply rewriting
\begin{equation}\label{divnondiv}
(a(x)y_{xx}(t,x))_{xx}=a''(x)y_{xx}(t,x)+ 2a'(x)y_{xxx}(t,x)+ a(x)y_{xxxx}(t,x).
\end{equation}
Thus, if we study well posedness and stability for \eqref{E1} rewritten using \eqref{divnondiv}, we have to require additional regularity on the function $a$ and on the solution $y$. Moreover,  the reference space for  \eqref{E1} is the natural $L^2(0,1)$; on the contrary, the reference space for the equation in
non divergence form \eqref{divnondiv}  is a suitably weighted $L^2$ - space. Thus, one can understand the importance of this paper: we prove controllability and stability results under basic assumptions on the degenerate function $a$ in the natural $L^2$ - space.

From the methodological point of view the strategy employed to analyze \eqref{E1} is based on an energy approach and direct methods such as multipliers. For the first problem we consider the energy of the associated homogeneous adjoint problem, for the second one we consider the energy of \eqref{E1}.  We underline again that in both cases we assume that the constant $K$ appearing in Definitions \ref{Def1} and \ref{Def2} is strictly less than $2$ for technical reasons. The case $K \ge 2$ is an open problem.

The paper is organized as follows. In Section \ref{section 2} we introduce notations, hypotheses and preliminary results used throughout the paper. In Section \ref{section 3} we consider \eqref{E1}, \eqref{E2}, \eqref{E4} and \eqref{IC} and  we prove that the energy associated to the solution of the adjoint problem is constant through time; then we establish some estimates from above and below for the energy and, thanks to these estimate and to the boundary observability, we prove that the original problem has a unique solution
by transposition, which is null controllable. In Section \ref{section 4}  we consider  \eqref{E1}, \eqref{E3}, \eqref{E4} and \eqref{IC} and we study the boundary stabilization problem proving its well posedness together with its exponential stability. However, the case $\gamma = 0$ and/or $\beta=0$ in the strongly degenerate case is still an open problem. Section \ref{appendice} is devoted to the proof of the generation theorems that are crucial to derive the well posedness of the considered problems.
	\section{Preliminary results}\label{section 2}
	In this section we will introduce some preliminary results that will be crucial for the following. As a first step, we consider the next spaces introduced in \cite{CF_Wentzell}.
	
	If the function \underline{$a$ is (WD)}, we consider:
	\[
		\begin{aligned}
			V^2_a(0,1):=\{u\in H^1(0,1):& \; u' \text{ is absolutely continuous in [0,1]},\\
			& \sqrt{a}u''\in L^2(0,1)\};
		\end{aligned}
	\]
on the other hand, if \underline{$a$ is (SD)}, then 
	\[
	\begin{aligned}
			V^2_a(0,1):=\{u\in H^1(0,1):& \; u' \text{ is locally absolutely continuous in } (0,1],\\
			& \sqrt{a}u''\in L^2(0,1)\}.
	\end{aligned}\]
	In both cases we consider on $V^2_a(0,1)$ the norm
	\begin{equation}\label{normadiv}
		\|u\|^2_{2,a}:= \|u\|^2_{L^2(0,1)}+ \|u'\|^2_{L^2(0,1)}+ \|\sqrt{a}u''\|^2_{L^2(0,1)} \quad \forall \; u \in V^2_a(0,1),
	\end{equation}
which is equivalent  to the following one
		\begin{equation}\label{normadiv1}
		\|u \|_{2}^2:= \|u\|^2_{L^2(0,1)}+ \|\sqrt{a}u''\|^2_{L^2(0,1)} \quad \forall \; u \in V^2_a(0,1)
	\end{equation}
(see \cite[Propositions 2.1 and 2.2]{CF_Wentzell}). 
	Using the previous spaces, it is possible to prove the following Gauss-Green formula: setting
\[
\mathcal Q(0,1):=\{u\in V^2_a(0,1): au''\in H^2(0,1)\},
\]
one has
\begin{equation}\label{GG0}
\int_0^1 (au'')''v\,dx =[(au'')'v]_{x=0}^{x=1}  - [au''v']_{x=0}^{x=1} + \int_0^1 au''v''dx
\end{equation}
for all   $(u,v) \in \mathcal Q(0,1) \times V^2_a(0,1)$ 
(see \cite[Lemmas 2.1 and 2.3]{CF_Neumann}). 

In $\mathcal Q(0,1)$ the next lemmas hold.
\begin{Lemma}\label{boundaryconditions_WD}
Assume $a$ (WD) and $y \in \mathcal Q(0,1)$. The following assertions are true:
\begin{enumerate}
\item If $y'(0)=0$ or $(ay'')'(0)=0$, then
$
		\lim_{\delta \rightarrow 0^+}y'(\delta)(ay'')'(\delta)= 0$.
		\item  If $y'(0)=0$ or $(ay'')(0)=0$, then 
		$\lim_{\delta \rightarrow 0^+} y'(\delta)(ay'')(\delta)=0$.
		\item $\lim_{\delta \rightarrow 0^+}a(\delta)(y')^2(\delta)=0.$
		\item $\lim_{\delta\to 0^+}\delta a(\delta)(y'')^2(\delta)=0$.
		\end{enumerate}
\end{Lemma}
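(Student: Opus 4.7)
The plan is to exploit the regularity packaged into $\mathcal Q(0,1)$ in the (WD) case. Because $y\in V^2_a(0,1)$ with $a$ weakly degenerate, $y'$ is absolutely continuous on $[0,1]$ and hence extends continuously to $x=0$; because $au''\in H^2(0,1)$, both $au''$ and $(au'')'$ belong to $H^1(0,1)\subset \mathcal C[0,1]$, so they also possess finite limits at $0$.

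For (1) and (2) this immediately reduces to a continuity argument: the products $y'(\delta)(ay'')'(\delta)$ and $y'(\delta)(ay'')(\delta)$ converge as $\delta\to 0^+$ to $y'(0)(ay'')'(0)$ and $y'(0)(ay'')(0)$, respectively, each of which vanishes under either of the stated hypotheses. Assertion (3) is equally direct: $y'$ is continuous at $0$, $a$ is continuous with $a(0)=0$, whence $a(\delta)(y'(\delta))^2\to 0$.

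The only genuine work is in (4), since $y''(\delta)$ need not stay bounded as $\delta\to 0^+$ and a pure continuity argument is unavailable. Here I would rewrite
\[
\delta\,a(\delta)\,(y''(\delta))^2 \;=\; \frac{\delta}{a(\delta)}\,\bigl(a(\delta)y''(\delta)\bigr)^2,
\]
and observe that the factor $(a(\delta)y''(\delta))^2$ converges to $((au'')(0))^2$, a finite number, since $au''\in H^2(0,1)\subset\mathcal C[0,1]$. It then suffices to show $\delta/a(\delta)\to 0$.

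This last step is where the restriction $K\in(0,1)$ from Definition \ref{Def1} is decisive, and I expect it to be the main conceptual obstacle. The pointwise bound $|a'(x)|/a(x)\le K/x$ from \eqref{sup} gives $|(\ln a)'(x)|\le K/x$; integrating from $\delta$ to $1$ yields $\ln a(1)-\ln a(\delta)\le K\ln(1/\delta)$, i.e.\ $a(\delta)\ge a(1)\,\delta^{K}$, and therefore
\[
\frac{\delta}{a(\delta)} \;\le\; \frac{\delta^{1-K}}{a(1)} \;\longrightarrow\; 0,
\]
as $\delta\to 0^+$, which closes the argument. Note that this is precisely the step that would fail for $K\ge 1$, foreshadowing why the (SD) case must replace the condition $y'(0)=0$ at $x=0$ with the flux-type condition $(ay'')(0)=0$ in \eqref{E4}.
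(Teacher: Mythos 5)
Your proposal is correct and follows essentially the same route as the paper: points (1)--(3) by continuity of $y'$ and $ay''\in H^2(0,1)\subset\mathcal C^1[0,1]$, and point (4) by writing $\delta a(\delta)(y'')^2(\delta)=\tfrac{\delta}{a(\delta)}(ay'')^2(\delta)$ and showing $\delta/a(\delta)\to 0$. Your integration of $|(\ln a)'|\le K/x$ to get $a(\delta)\ge a(1)\delta^{K}$ is just an explicit form of the paper's observation \eqref{crescente} that $x\mapsto x^{\gamma}/a(x)$ is non-decreasing for $\gamma\ge K$, so the two arguments coincide.
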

\begin{proof}  Since $a$ is (WD) and $y \in \mathcal Q(0,1)$, one has $ay'' \in \mathcal C^1[0,1]$ and $y'$ absolutely continuous in $[0,1]$. Thus, one has immediately
	\[
		\lim_{\delta \rightarrow 0^+}y'(\delta)(ay'')'(\delta)=\lim_{\delta \rightarrow 0^+} y'(\delta)(ay'')(\delta)=\lim_{\delta \rightarrow 0^+}a(\delta)(y')^2(\delta)=0.
		\] 
		Thus it remains to prove the last point.  To this aim observe that (\ref{sup}) implies that  the function
	\begin{equation}\label{crescente}
		x\mapsto\frac{x^\gamma}{a(x)}
	\end{equation}
is non decreasing in $(0,1]$ for all $\gamma\ge K$, in particular
		\begin{equation}\label{ipotesi limite}
		\lim_{x\to 0}\frac{x^\gamma}{a(x)}=0
	\end{equation}
	for all $\gamma >K$. Thus, using the fact that $ay'' \in \mathcal C^1[0,1]$ and  \eqref{ipotesi limite}, one has
		 	\begin{equation}\label{stimabt2}
			\lim_{\delta\to 0^+}\delta a(\delta)(y'')^2(\delta)=\lim_{\delta\to 0^+}\frac{\delta}{a(\delta)}(ay'')^2(\delta)=0.
		\end{equation}

\end{proof}

\begin{Lemma}\label{boundaryconditions_SD}
Assume $a$ (SD) and $y\in \mathcal Q(0,1)$. The following assertions are true:
\begin{enumerate}
\item $
		\lim_{\delta \rightarrow 0^+}\delta  y'(\delta)=0.
	$
\item If $(ay'')(0)=0$, then
$
		\lim_{\delta \rightarrow 0^+} y'(\delta)(ay'')(\delta)=0.
$
		\item  If $(ay'')(0)=0$, then
		$\lim_{\delta \rightarrow 0^+}a(\delta)(y')^2(\delta)=0.$

\item 	If $(ay'')(0)=0$, then
$
	\lim_{\delta\to 0^+}\delta a(\delta)(y'')^2(\delta)=0.
$
\end{enumerate}
\end{Lemma}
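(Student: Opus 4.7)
The plan is to establish parts (2), (3), (4) as consequences of part (1) combined with the observation that, since $ay''\in H^2(0,1)\hookrightarrow \mathcal{C}^1[0,1]$, the hypothesis $(ay'')(0)=0$ yields the quantitative bound $|(ay'')(\delta)|\le C\delta$ near the origin, with $C=\|(ay'')'\|_{L^\infty(0,1)}$. The common ingredient throughout is the quantitative lower bound $a(x)\ge a(1)\,x^K$ for every $x\in(0,1]$, which is built into \eqref{sup}: the inequality $|(\ln a)'(x)|\le K/x$ integrates over $[x,1]$ to $\ln(a(1)/a(x))\le K\ln(1/x)$. Equivalently, as noted already in the proof of Lemma \ref{boundaryconditions_WD}, the function $x^K/a(x)$ is non-decreasing and bounded above by $1/a(1)$.

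For (1), I would start from $y'(\delta)=y'(1)-\int_\delta^1 y''(s)\,ds$ and apply the Cauchy--Schwarz inequality with the weight $\sqrt{a}$:
\[
|y'(\delta)-y'(1)|^2 \le \Bigl(\int_\delta^1 \frac{ds}{a(s)}\Bigr) \|\sqrt{a}\,y''\|_{L^2(0,1)}^2.
\]
The lower bound on $a$ then controls the integral: $\int_\delta^1 s^{-K}\,ds$ grows like $\delta^{1-K}/(K-1)$ when $K\in(1,2)$ and like $-\ln\delta$ when $K=1$. Multiplying the resulting estimate for $|y'(\delta)|$ by $\delta$ gives $\delta|y'(\delta)|=O(\delta^{(3-K)/2})$ (respectively $O(\delta\sqrt{-\ln\delta})$), which tends to $0$ because $K<2$ ensures $(3-K)/2>1/2$.

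The other three parts follow quickly. Part (2) is immediate: $|y'(\delta)(ay'')(\delta)|\le C\,\delta|y'(\delta)|\to 0$ by (1). For (3), I would refine the reasoning used in (1) by exploiting $(ay'')(0)=0$: writing $y''(s)=(ay'')(s)/a(s)$, one gets $|y''(s)|\le Cs/a(s)\le (C/a(1))\,s^{1-K}$, which is integrable on $(0,1)$ precisely because $K<2$. Therefore $y'(\delta)$ admits a finite limit as $\delta\to 0^+$ and is in particular bounded, so $a(\delta)(y'(\delta))^2\to 0$ since $a(\delta)\to 0$. For (4) I would rewrite
\[
\delta\,a(\delta)(y''(\delta))^2=\frac{\delta\,(ay'')^2(\delta)}{a(\delta)}\le C^2\,\frac{\delta^3}{a(\delta)},
\]
and conclude via \eqref{ipotesi limite} with the exponent $\gamma=3$, which satisfies $\gamma>K$ because $K<2<3$.

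The main obstacle is part (1), since it cannot benefit from the boundary condition $(ay'')(0)=0$ and must be proved from the $L^2$-control of $\sqrt{a}\,y''$ alone. The estimate is essentially sharp: the singularity of $1/a$ at $0$ produces a rate precisely of order $\delta^{1-K}$, and the factor $\delta$ in front of $y'(\delta)$ just barely absorbs it, which is exactly where the standing hypothesis $K<2$ enters as a critical threshold.
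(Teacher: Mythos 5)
Your proof is correct, but it follows a genuinely different route from the paper's. The paper argues \emph{qualitatively} for parts (1)--(3): it writes each quantity as a boundary value minus the integral of an $L^1$ function, invokes the absolute continuity of the integral to deduce that the limit $L$ exists, and then rules out $L\neq 0$ by showing it would force $y'\notin L^2(0,1)$ (for part (3) this requires the additional fact, cited from \cite{BoCaFr}, that $1/a\notin L^1(0,1)$ in the strongly degenerate case); part (4) is handled separately via a Taylor expansion of $ay''$ at the origin combined with \eqref{ipotesi limite}. You instead work \emph{quantitatively} from the pointwise lower bound $a(x)\ge a(1)x^K$ (which is indeed equivalent to the monotonicity of $x^K/a(x)$ in \eqref{crescente}) and the Lipschitz bound $|(ay'')(\delta)|\le C\delta$ coming from $ay''\in\mathcal C^1[0,1]$ with $(ay'')(0)=0$. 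This buys you explicit decay rates — $\delta|y'(\delta)|=O(\delta^{(3-K)/2})$, with the logarithmic correction at $K=1$ correctly noted — makes parts (2) and (4) one-line consequences, and in part (3) actually yields the stronger conclusion that $y'$ is bounded near $0$ (since $|y''(s)|\le (C/a(1))s^{1-K}\in L^1(0,1)$ for $K<2$), whereas the paper only obtains $a(y')^2\to 0$ and needs the non-integrability of $1/a$ to do so. The paper's soft argument, for its part, avoids any explicit rate and exposes more directly where the mere $L^2$-integrability of $y'$ and $\sqrt a\,y''$ is the operative hypothesis. Both proofs use the threshold $K<2$ in essentially the same place (integrability of $s^{1-K}$, respectively of $t/a(t)$), and I see no gap in yours.
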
	
\begin{proof}
1. As a first step we will prove that $\exists \lim_{\delta \rightarrow 0^+} \delta y'(\delta)\in \R$. To this aim, we rewrite $\delta y'(\delta)$  as
		\[
		\delta y'( \delta) = y'(1) -\int_\delta^1 (x y')'(x)dx =y'(1) -\int_\delta^1  y'(x)dx - \int_\delta^1 x y''(x)dx.
		\]
		Since $y' \in L^2(0,1)$ and $| x y''|= \frac{x}{\sqrt a}|\sqrt {a}y''| \le \frac{1}{\sqrt{a(1)}}|\sqrt {a}y''| \in L^2(0,1)$, by the absolute continuity of the integral, one has that $\exists \lim_{\delta \rightarrow 0^+} \delta y'(\delta)=L\in \R$. Assume $L\neq0$, then there exists $C>0$ such that
		\[
		|y'(\delta)| \ge \frac{C}{\delta}
		\]
		for all $\delta$ in a neighbourhood of $0$. Since $\ds\frac{1}{\delta}\not\in L^1(0,1)$, one can conclude that $y' \not \in L^1(0,1)$, but this is not possible since $y' \in L^2(0,1)$. Hence $L=0$.
		
		2.	Clearly,
			\[
			y'(\delta)(ay'')(\delta)=y'(1)(ay'')(1) -\int_\delta^1 (y'ay'')'dx =y'(1)(ay'')(1) -\int_\delta^1 a(y'')^2dx -\int_\delta^1 y'(ay'')'dx.
			\]
			By the absolute continuity of the integral, using the fact that $a(y'')^2$ and  $y'(ay'')'$ belong to $L^1(0,1)$, one has that there exists $\lim_{\delta \rightarrow 0}y'(\delta)(ay'')(\delta)=L \in \R$. As before, if $L\neq 0$, then there exists $C>0$ such that
		\begin{equation}\label{stimatb}
		|y'(\delta)(ay'')(\delta)| \ge C
		\end{equation}
		for all $\delta$ in a neighbourhood of $0$.
		Moreover, 
			\begin{equation}\label{tondino}
		|(ay'')(\delta)|\le \int_0^\delta |(ay'')'(x)|dx \le \sqrt{\delta}\|(ay'')'\|_{L^2(0,1)}.
		\end{equation}
		Hence, by \eqref{stimatb} and \eqref{tondino}, we have
		\[
		C \le |y'(\delta)ay''(\delta)| \le |y'(\delta)|\sqrt{\delta}\|(ay'')'\|_{L^2(0,1)}.
		\]
		This implies
		\begin{equation}\label{tondino1}
		|y'(\delta)|\ge \frac{C}{\sqrt{\delta}\|(ay'')'\|_{L^2(0,1)}}
		\end{equation}
		for all  $\delta$ in a neighbourhood of $0$,
		in contrast to the fact that $y' \in L^2(0,1)$. Hence $L=0$.

				3. 
				To obtain the thesis we rewrite $a(\delta)(y')^2(\delta)$ in the following way
			\[
			\begin{aligned}
			a(\delta)(y')^2(\delta)&=a(1)(y')^2(1) - \int_\delta^1 (a(y')^2)'(s)ds = a(1)(y')^2(1) - \int_\delta^1 (a'(y')^2)(s)ds \\&
			-  2\int_\delta^1 (ay'y'')(s)ds.
			\end{aligned}
			\]
		Since $a'(y')^2$ and $ay'y''$ belong to $L^1(0,1)$, we have that $\exists \lim_{\delta \rightarrow 0^+} a(\delta)(y')^2(\delta)=L \in \R$. If $L\neq0$, then there exists $C>0$ such that
		\[
		|(y')^2(\delta)| \ge \frac{C}{a(\delta)}
		\]
		for all $\delta$ in a neighbourhood of $0$. Since $\ds\frac{1}{a}\not\in L^1(0,1)$ (see \cite{BoCaFr}), one can conclude that $y'\not \in L^2(0,1)$. Hence $L=0$.
	
	4.
				If $K=1$, then, using the fact that $\ds\frac{\delta}{a(\delta)} \le \frac{1}{a(1)}$ and proceeding as in \eqref{stimabt2}, one has immediately 
	the thesis since $ay'' \in \mathcal C^1[0,1]$ and $(ay'')(0)=0$.
	On the other hand, if $K \in (1,2)$, then, using the Taylor formula, we have
		\[
		a(\delta) y''(\delta)= (ay'')'(\xi_\delta) \delta + o(\delta),
		\]
		for $\xi_\delta \in (0, \delta)$;
		hence
		\begin{equation}\label{stimabt4}
		\begin{aligned}
			\delta a(\delta)(y'')^2(\delta) &= \delta y''(\delta)((ay'')'(\xi_\delta) \delta + o(\delta))\\
			& = (ay'')'(\xi_\delta) \delta^2 y''(\delta) + o(\delta^2)y''(\delta).
	\end{aligned}
		\end{equation}
		Now, 
$
		\lim_{\delta \rightarrow 0}	\delta ^2y''(\delta)= \lim_{\delta \rightarrow 0}	\frac{\delta ^2}{a(\delta)}(ay'')(\delta)=0
		$
and,		analogously, 
$
			\lim_{\delta \rightarrow 0}o(\delta^2)y''(\delta) =0;
$ moreover, using the fact that  $ay'' \in \mathcal C^1[0,1]$, one has  $	\lim_{\delta \rightarrow 0} \delta ^2y''(\delta)(ay'')'(\xi_\delta)= 0$. 
Thus, passing to the limit in \eqref{stimabt4}, we have the thesis.
			 
 (Observe that the previous proof holds also if $K=1$).	

\end{proof}

Finally, we recall the following Hardy-Poincaré inequality which will be essential in the next sections:
\begin{Proposition}[Proposition 2.1, \cite{BoCaFr}]\label{HP}
Assume that $a\, : \, [0,1] \rightarrow \mathbb{R}_+$ is in
${\cal C}[0,1]$, $a(0)=0$, $a>0$ on $(0,1]$.
If there exists
$\theta \in (0,1)$ such that the function
$$
x \mapsto \dfrac{a(x)}{x^{\theta}} \mbox { is
non increasing in } (0,1],
$$
\noindent then  for any
function $w$, locally absolutely continuous on $(0,1]$, continuous
at $0$ and satisfying

$$
w(0)=0 \,,\, \mbox{and } \int_0^1 a(x)|w^{\prime}(x)|^2 \,dx <
+\infty, \,
$$
\noindent the following inequality holds

\[
\int_0^1 \dfrac{a(x)}{x^2}w^2(x)\, dx \leq C_{HP}\, \int_0^1 a(x)
|w^{\prime}(x)|^2 \,dx, \,
\]
where $C_{HP}:= \ds\frac{4}{(1-\theta)^2}.$
\end{Proposition}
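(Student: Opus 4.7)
The plan is to exploit the identity $\frac{1}{x^{2-\theta}}=\frac{d}{dx}\!\left(-\frac{1}{(1-\theta)x^{1-\theta}}\right)$ and perform an integration by parts on a truncated interval $[\epsilon,1]$, then let $\epsilon\to 0^+$. Concretely, I would write $\frac{a(x)}{x^2}=\frac{a(x)}{x^\theta}\cdot\frac{1}{x^{2-\theta}}$ and integrate by parts, pulling the antiderivative of $\frac{1}{x^{2-\theta}}$ onto the factor $w^2$ while letting $\frac{a(x)}{x^\theta}$ differentiate. This produces four contributions: a boundary term at $x=1$ with a favourable sign, a boundary term at $x=\epsilon$ equal to $\frac{a(\epsilon)}{(1-\theta)\epsilon}w^2(\epsilon)$, a term involving the derivative of $\frac{a(x)}{x^\theta}$ paired with a negative antiderivative, and the ``good'' cross term $\frac{2}{1-\theta}\int_\epsilon^1 \frac{a(x)}{x}w(x)w'(x)\,dx$.

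The monotonicity hypothesis immediately disposes of two pieces: the boundary contribution at $x=1$ is non-positive and may be dropped, and the distributional derivative of $\frac{a(x)}{x^\theta}$ is a non-positive measure, which paired with the negative antiderivative yields another non-positive term to discard. For the remaining cross term I would apply Young's inequality $2|\alpha\beta|\le\lambda\alpha^2+\beta^2/\lambda$ with $\alpha=\sqrt{a}\,w'$ and $\beta=\sqrt{a}\,w/x$, obtaining
\[
\frac{2}{1-\theta}\int_\epsilon^1\frac{a}{x}\,ww'\,dx\le \frac{\lambda}{1-\theta}\int_\epsilon^1 a(w')^2\,dx+\frac{1}{(1-\theta)\lambda}\int_\epsilon^1 \frac{a}{x^2}w^2\,dx.
\]
The choice $\lambda=\frac{2}{1-\theta}$ absorbs half of the weighted $L^2$ norm back into the left-hand side and produces precisely the constant $C_{HP}=\frac{4}{(1-\theta)^2}$ in front of $\int_\epsilon^1 a(w')^2\,dx$.

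The main obstacle I anticipate is the passage to the limit $\epsilon\to 0^+$: one must check that the boundary term $\frac{a(\epsilon)}{\epsilon}w^2(\epsilon)$ vanishes, and a priori neither the integrability of $\frac{a}{x^2}w^2$ nor the boundedness of $\frac{a(\epsilon)}{\epsilon^\theta}$ is available. I would handle this by converting the monotonicity hypothesis into the pointwise lower bound $a(x)\ge a(\epsilon)(x/\epsilon)^\theta$ for $0<x\le\epsilon$, which yields $\int_0^\epsilon \frac{dx}{a(x)}\le \frac{\epsilon}{(1-\theta)a(\epsilon)}$. Combined with Cauchy-Schwarz, $w^2(\epsilon)=\bigl(\int_0^\epsilon w'\,dx\bigr)^2\le \int_0^\epsilon \frac{dx}{a(x)}\cdot\int_0^\epsilon a(w')^2\,dx$, this delivers $\frac{a(\epsilon)}{\epsilon}w^2(\epsilon)\le \frac{1}{1-\theta}\int_0^\epsilon a(w')^2\,dx\to 0$ by absolute continuity of the integral, using the hypothesis $\int_0^1 a(w')^2\,dx<+\infty$. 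A monotone convergence argument on the left-hand side then gives the stated inequality on $[0,1]$.
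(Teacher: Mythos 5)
Your proof is correct and complete: the truncated integration by parts against the antiderivative of $x^{\theta-2}$, the sign analysis via the monotonicity of $x\mapsto a(x)/x^{\theta}$, the Young absorption with $\lambda=\tfrac{2}{1-\theta}$ yielding exactly $C_{HP}=\tfrac{4}{(1-\theta)^2}$, and the control of the boundary term through $\int_0^{\epsilon}a^{-1}\,dx\le\tfrac{\epsilon}{(1-\theta)a(\epsilon)}$ are all sound. The paper itself gives no proof of this proposition — it is imported verbatim from the cited reference \cite{BoCaFr} — and your argument is essentially the standard one used there, so there is nothing to reconcile.
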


	\section{The boundary controllability problem}\label{section 3}
	The first problem considered in this paper is a boundary controllability problem. In particular, we will consider the following system
	\begin{equation}\label{(P)}
		\begin{cases}
			u_{tt}(t,x)+(au_{xx})_{xx}(t,x)=0, &(t,x)\in Q_T,\\
			u(t,0)=0,\,\,&t\in (0,T),\\
			\begin{cases}
			u_x(t,0)=0, &\text{ if } a \text{ is (WD)},\\
			(au_{xx})(t,0)=0, &\text{ if } a \text{ is (SD)},\\ 
			\end{cases}&t\in (0,T),\\
			u(t,1)=0,\,\,u_x(t,1)=f(t), &t\in (0,T),\\
			u(0,x)=u_0(x),\,\,u_t(0,x)=u_1(x),&x\in(0,1),
		\end{cases}
	\end{equation}
	where $T>0$, $Q_T:=(0,T) \times (0,1)$,  $f$ is a control function that acts on the non degeneracy boundary point and it is used to drive the solution to equilibrium at a given time $T>0$. In this framework we look for conditions under which, given initial data $(u_0,u_1)$ in a suitable space, there exists a control $f$ such that
	\begin{equation*}
		u(T,x)=u_t(T,x)=0,\,\,\,\,\,\,\,\quad\forall\; x\in (0,1).
	\end{equation*}
	\subsection{Well posedness for the problem with homogeneous boundary conditions}\label{subsect2}
	In this subsection we study the well posedness of the following degenerate hyperbolic problem  with Dirichlet boundary conditions
	\begin{equation}\label{(P_1)}
		\begin{cases}
			y_{tt}(t,x)+(ay_{xx})_{xx}(t,x)=0, &(t,x)\in (0,+\infty) \times (0,1),\\
			y(t,0)=y(t,1)=0,&t\in (0,+\infty),\\
			y_x(t,1)=0,&t\in (0,+\infty),\\
			\begin{cases}
			y_x(t,0)=0, &\text{ if } a \text{ is (WD)},\\
		(ay_{xx})(t,0)=0, &\text{ if } a \text{ is (SD)},\\ 
			\end{cases}, &t\in (0, + \infty),\\
			y(0,x)=y^0_T(x),&x\in(0,1),\\
			y_t(0,x)=y^1_T(x),&x\in(0,1)
		\end{cases}
	\end{equation}
	and we give some preliminary results needed for the following.
	
We underline that the choice of denoting initial data with $T$-dependence is connected to the approach for null controllability used in Subsection \ref{section 3}.3.
	
In order to study the well posedness of (\ref{(P_1)}), thanks to the spaces introduced in Section \ref{section 2}, we consider the following Hilbert spaces, which are different according to the kind of degeneracy of $a$. 

In particular, if the function \underline{$a$ is (WD)}, then we consider:
	\[		\begin{aligned}
			H^2_a(0,1):&=\{u \in V^2_a(0,1): u(0)=u(1)=0\}\\
			&=\{u\in H^1_0(0,1): u' \text{ is absolutely continuous in [0,1]},\\
			& \quad \;\; \sqrt{a}u''\in L^2(0,1)\};
		\end{aligned}
	\]
on the other hand, if \underline{$a$ is (SD)}, then 
	\[
		\begin{aligned}
			H^2_a(0,1):&=\{u \in V^2_a(0,1): u(0)=u(1)=0\}\\
			&=\{u\in H^1_0(0,1): u' \text{ is locally absolutely continuous in } (0,1],\\
			& \quad \; \;\sqrt{a}u''\in L^2(0,1)\}.
	\end{aligned}\]
	In both cases we consider on $H^2_a(0,1)$ the norms
	$
		\|\cdot\|_{2,a}$ or
		$
		\|\cdot\|_{2}$ defined in \eqref{normadiv} and in \eqref{normadiv1}, respectively.
In every case, setting
\[
\mathcal Z(0,1):=\{u\in \H2: au''\in H^2(0,1)\},
\]
the Gauss-Green formula \eqref{GG0} becomes
\begin{equation}\label{GG}
\int_0^1 (au'')''v\,dx = - [au''v']_{x=0}^{x=1} + \int_0^1 au''v''dx
\end{equation}
for all   $(u,v) \in \mathcal Z(0,1) \times \H2$.
Now, define

\[
		\begin{aligned}
			H^2_{a,0}(0,1):&=\{u\in H^2_a(0,1): u'(0)=u'(1)=0\}\\
			&=\{u\in H^1_0(0,1): u' \text{ is absolutely continuous in [0,1]},\\
			& \quad \quad u'(0)=u'(1)=0, \sqrt{a}u''\in L^2(0,1)\}
		\end{aligned}
	\]
if \underline{$a$ is (WD)}, and
	\[
		\begin{aligned}
			H^2_{a,0}(0,1):&=\{u\in H^2_a(0,1): u'(1)=0\}\\
			&=\{u\in H^1_0(0,1): u' \text{ is locally absolutely continuous in (0,1]},\\
			& \quad \quad u'(1)=0, \sqrt{a}u''\in L^2(0,1)\}
	\end{aligned}\]
if \underline{$a$ is (SD)}. On $H^2_{a,0}(0,1)$ we consider the same norms of $H^2_a(0,1)$ and the following one 
\[
			\|u\|_{2, \sim }:= \|\sqrt{a}u''\|_{L^2(0,1)} \quad\forall\; u\in \Ho. 
			\]

			\begin{Proposition}\label{norms} Assume $a$ (WD) or (SD). Then the norms $\|\cdot\|_{2, a}$, $\| \cdot\|_2$ and
			$
			\|\cdot\|_{2, \sim}
			$
			are equivalent in $\Ho$. In particular, 
			\begin{equation}\label{stimau'}
			\|u\|^2_{L^2(0,1)} \le\|u'\|^2_{L^2(0,1)} \le  \frac{1}{a(1)(2-K)}\|\sqrt{a}u''\|^2_{L^2(0,1)}, 
			\end{equation}
			for all $u \in \Ho$.
		\end{Proposition}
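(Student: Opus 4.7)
The first inequality in \eqref{stimau'}, namely $\|u\|^2_{L^2(0,1)}\le \|u'\|^2_{L^2(0,1)}$, is just the Poincar\'e inequality on $H^1_0(0,1)$: from $u(0)=0$ one writes $u(x)=\int_0^x u'(s)\,ds$, and Cauchy--Schwarz followed by integration in $x$ yields in fact the stronger bound $\|u\|^2_{L^2(0,1)}\le \tfrac{1}{2}\|u'\|^2_{L^2(0,1)}$. The substance of the statement therefore lies in the second inequality of \eqref{stimau'}.

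The plan is to exploit the boundary condition $u'(1)=0$, which holds for every $u\in\Ho$ in both the (WD) and the (SD) regimes, to represent $u'$ as an integral of $u''$ starting from the non-degenerate endpoint. Since $u'$ is (at least locally) absolutely continuous on $(0,1]$, for every $x\in (0,1]$ one has
\[
u'(x)=-\int_x^1 u''(s)\,ds,
\]
and a weighted Cauchy--Schwarz with weight $a(s)$ gives
\[
(u'(x))^2\le \left(\int_x^1\frac{ds}{a(s)}\right)\int_x^1 a(s)(u''(s))^2\,ds \le \left(\int_x^1\frac{ds}{a(s)}\right)\|\sqrt{a}u''\|^2_{L^2(0,1)}.
\]
Integrating in $x$ over $(0,1)$ and applying Fubini, the problem reduces to estimating the one-dimensional quantity $\int_0^1 s/a(s)\,ds$.

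Bounding this integral is the only non-trivial step, and it is where the parameter $K$ enters in its sharp form. Since $x^\gamma/a(x)$ is non-decreasing on $(0,1]$ for every $\gamma\ge K$ (the property recorded just after \eqref{crescente}), the choice $\gamma=K$ gives $a(s)\ge a(1) s^K$ for $s\in(0,1]$; hence
\[
\int_0^1 \frac{s}{a(s)}\,ds \le \frac{1}{a(1)}\int_0^1 s^{1-K}\,ds = \frac{1}{a(1)(2-K)},
\]
where the standing hypothesis $K<2$ (valid both in (WD) and in (SD)) is exactly what makes $s^{1-K}$ integrable near $0$ and keeps $2-K$ positive. Combining the two displays produces precisely the constant $\frac{1}{a(1)(2-K)}$ announced in \eqref{stimau'}. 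I expect this sharp extraction of the factor $2-K$ to be the main obstacle: a cruder approach via the identity $(x(u')^2)'=(u')^2+2xu'u''$ together with the bound $x^2/a(x)\le 1/a(1)$ yields only the constant $4/a(1)$ and misses the dependence on $K$.

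Once \eqref{stimau'} is established, the equivalence of the three norms is automatic. The chain of inequalities in \eqref{stimau'} controls $\|u\|_{L^2(0,1)}$ and $\|u'\|_{L^2(0,1)}$ by a constant multiple of $\|\sqrt{a}u''\|_{L^2(0,1)}$, so $\|u\|_{2,a}$ and $\|u\|_{2}$ are each bounded by a constant (depending only on $a(1)$ and $K$) times $\|u\|_{2,\sim}$; the reverse chain $\|u\|_{2,\sim}\le \|u\|_{2}\le \|u\|_{2,a}$ is immediate from the definitions \eqref{normadiv} and \eqref{normadiv1}.
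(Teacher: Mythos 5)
Your proof is correct and follows essentially the same route as the paper: the representation $u'(x)=-\int_x^1 u''\,ds$ from $u'(1)=0$, weighted Cauchy--Schwarz, Fubini reducing matters to $\int_0^1 t/a(t)\,dt$, and the bound $a(t)\ge a(1)t^K$ (equivalently, the monotonicity of $t^K/a(t)$) giving the constant $\frac{1}{a(1)(2-K)}$. The paper's argument is identical in every essential step, so there is nothing to add.
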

		\begin{proof} Clearly $\|u\|_{2, \sim } \le \|u \|_{2}\le \|u\|_{2, a}$ for all $u \in H^2_a(0,1)$ and, in particular, for all $u \in \Ho$. Thus, it is sufficient to prove
		that there exists a positive constant $C$ such that
		\[
		\|u \|_{2} \le C\|u\|_{2, \sim } \quad \text{or } \quad \|u\|_{2, a} \le C\|u\|_{2, \sim },
		\]
for all $u \in \Ho$.		
		Thus, fix $u\in \Ho$; since $u(0)=0$, we have
\begin{equation}\label{star}
\begin{aligned}
	|u(x)|&= \Biggl |\int_0^xu'(s)ds\Biggr |\le  \int_0^x	|u'(s)|ds\le \sqrt{x} \|u'\|_{L^2(0,1)}\le\|u'\|_{L^2(0,1)},
	\end{aligned}
\end{equation}
for all $x \in (0,1]$, 
and
\begin{equation}\label{star1}
\|u\|^2_{L^2(0,1)}\le\|u'\|^2_{L^2(0,1)}.
\end{equation}
Thus, it remains to estimate $ \|u'\|^2_{L^2(0,1)}$. To this aim, using the fact that $u'(1)=0$,  we rewrite
		\begin{equation*}
			|u'(x)|=\Biggl |\int_x^1\frac{\sqrt{a(t)}u''(t)}{\sqrt{a(t)}}dt \Biggr |\le 	\|\sqrt{a}u''\|_{L^2(0,1)}\Biggl (\int_x^1\frac{1}{a(t)}dt\Biggr )^{\frac{1}{2}},
		\end{equation*}
		for every $x\in (0,1]$.
	Thus,
	\begin{equation}\label{virizion}
		\begin{aligned}
			\|u'\|^2_{L^2(0,1)}&\le \|\sqrt{a}u''\|^2_{L^2(0,1)}\int_0^1\int_x^1\frac{1}{a(t)}dt\,dx\\
			&=\|\sqrt{a}u''\|^2_{L^2(0,1)}\int_0^1\frac{1}{a(t)}\int_0^tdx\,dt=\|\sqrt{a}u''\|^2_{L^2(0,1)}\int_0^1\frac{t}{a(t)}dt.
		\end{aligned}
	\end{equation}
By \eqref{crescente}, one has
\begin{equation}\label{2.12'}
	\int_0^1\frac{t}{a(t)}dt= \int_0^1\frac{t^K}{a(t)}t^{1-K}dt\le \frac{1}{a(1)}\int_0^1t^{1-K}dt=  \frac{1}{a(1)(2-K)}.
\end{equation}
Consequently
\[
\|u\|^2_{L^2(0,1)} \le \|u'\|^2_{L^2(0,1)} \le  \frac{1}{a(1)(2-K)}\|\sqrt{a}u''\|^2_{L^2(0,1)}
\]
and the thesis follows.
		\end{proof}

To complete the description of the functional setting needed to treat problem (\ref{(P_1)}), another important Hilbert space related to its well posedness is given by the following one:
	\begin{equation*}
		\mathcal{H}_0:=H^2_{a,0}(0,1)\times L^2(0,1),
	\end{equation*}
	endowed with inner product and norm given by
	\begin{equation*}
		\langle (u,v),(\tilde{u},\tilde{v})\rangle_{\mathcal{H}_0}:=\int_{0}^{1}au''\tilde{u}''dx+\int_{0}^{1}v\tilde{v}\,dx
	\end{equation*}
 and 
	\begin{equation*}
		\|(u,v)\|^2_{\mathcal{H}_0}:=\int_{0}^{1}a(u'')^2dx+\int_{0}^{1}v^2dx
	\end{equation*}
		for every $(u,v), (\tilde{u},\tilde{v})\in\mathcal{H}_0$. Now, consider the operators
$A: D(A)\subset L^2(0,1) \rightarrow L^2(0,1)$ and   $\mathcal{A}:D(\mathcal{A})\subset\mathcal{H}_0\to \mathcal{H}_0$ given by  
	\[
		Au:=(au'')'',
\]
	for all $u \in D(A)$, where
	\[
	\begin{aligned}
D(A):= \{ u \in \mathcal Z (0,1): &\; u'(0)=u'(1)=0, \text{ if $a$ is (WD), or} \\
	& (au'')(0)=u'(1)=0, \text{  if $a$ is (SD)}\},
		\end{aligned}
	\]
	and 
	\[	\mathcal{A}:=\begin{pmatrix}
		0 & Id \\
		-A & 0
	\end{pmatrix},\quad 
	\begin{aligned}
	D(\mathcal{A}):=D(A)\times  \Ho.
	\end{aligned}
	\]
	Observe that   for all $(u,v)\in D(\mathcal A)$ 
	\begin{equation}\label{GF0}
			\int_{0}^{1}(au'')''v\,dx=\int_{0}^{1}au''v''dx.
		\end{equation}
		Indeed, thanks to \eqref{GG}, it is sufficient to prove that there exists $\lim_{\delta \rightarrow 0^+}(au''v')(\delta)=0$. This is clearly satisfied if $a$ is (WD), since in this case $v$ is absolutely continuous in $[0,1]$ and $v'(0)=0$. Now, assume that $a$ is (SD). Since $(au''v')(1)=0$, one can write
		\[
		(au''v')(\delta)=-\int_\delta^1 (au''v')'(x)dx = -\int_\delta^1 (au'')'(x)v'(x)dx - \int_\delta^1 (au''v'')(x)dx,
		\]
		where $\delta \in (0,1)$ is fixed.
	Clearly, $au'' \in H^2(0,1)$, $\sqrt{a}u'', \sqrt{a}v'', v' \in L^2(0,1)$, thus $(au'')'v'$ and $au''v'' \in L^1(0,1)$ and, by the absolute continuity of the integral,  $\exists \; \lim_{\delta \rightarrow 0}(au''v')(\delta)=L \in \R.
$ Since $(u, v) \in D(\mathcal A)$, we know that $(au'')(0)=0$, hence \eqref{tondino} holds and
		\[
		|(au'')(x)|\le \|(au'')'\|_{L^2(0,1)}\sqrt{x}.
		\]
		This implies that, if $L \neq 0$, proceeding as in \eqref{stimatb} and \eqref{tondino1}, one has  $v' \not\in L^2(0,1)$; hence $L=0$.

	Thanks to \eqref{GF0} and using the semigroup technique, we can prove the following generation theorem whose proof is written in the Appendix.
\begin{Theorem}\label{th generazione}
	Assume $a$ (WD) or (SD). Then the operator $(\mathcal{A}, D(\mathcal{A}))$ is non positive with dense domain and generates a contraction semigroup $(T(t))_{t\ge 0}$.
\end{Theorem}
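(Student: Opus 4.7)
The plan is to invoke the Lumer--Phillips theorem, which reduces the claim to three verifications: (i) $D(\mathcal{A})$ is dense in $\mathcal{H}_0$, (ii) $\mathcal{A}$ is dissipative, and (iii) $I-\mathcal{A}$ is surjective onto $\mathcal{H}_0$. Density is straightforward: every $\varphi\in C^\infty_c(0,1)$ lies in $D(A)$ (compact support makes all boundary conditions in $D(A)$ trivially hold, whether $a$ is (WD) or (SD), and $a\varphi''$ is smooth with compact support, hence in $H^2(0,1)$) and also in $\Ho$; since $C^\infty_c(0,1)$ is dense in $L^2(0,1)$ and in $\Ho$ (by a standard mollification/cut-off argument), $D(\mathcal{A})=D(A)\times \Ho$ is dense in $\Ho\times L^2(0,1)=\mathcal{H}_0$. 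Dissipativity is immediate: for $(u,v)\in D(\mathcal{A})$,
$$\langle \mathcal{A}(u,v),(u,v)\rangle_{\mathcal{H}_0}=\int_0^1 au''v''\,dx-\int_0^1 (au'')''v\,dx=0$$
by the Gauss--Green identity \eqref{GF0}, so $\mathcal{A}$ is in fact skew-symmetric on $D(\mathcal{A})$, and in particular non-positive.

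For surjectivity, given $(f,g)\in\mathcal{H}_0$ the equation $(I-\mathcal{A})(u,v)=(f,g)$ rewrites as $v=u-f$ together with the scalar equation $u+(au'')''=h$, with $h:=f+g\in L^2(0,1)$. I would solve this by applying Lax--Milgram on $\Ho$ to the bilinear form
$$B(u,\phi):=\int_0^1 u\phi\,dx+\int_0^1 au''\phi''\,dx,$$
which is continuous and coercive by Proposition~\ref{norms} (since $B(u,u)$ dominates $\|\sqrt{a}u''\|_{L^2}^2$ and hence, up to equivalence of norms, the full $\|\cdot\|_{2,a}$-norm). This produces a unique weak solution $u\in \Ho$ with $B(u,\phi)=\int_0^1 h\phi\,dx$ for every $\phi\in\Ho$.

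The main obstacle is then lifting this weak solution into $D(A)$ with the correct pointwise boundary conditions. Testing against $\phi\in C^\infty_c(0,1)$ shows that $(au'')''=h-u$ in the distributional sense on $(0,1)$; combined with $au''\in L^2(0,1)$, a one-dimensional regularity argument places $au''\in H^2(0,1)$, so $u\in\mathcal{Z}(0,1)$ and in particular $au''\in C^1([0,1])$. Comparing the weak identity with the Gauss--Green formula \eqref{GG} then gives
$$[au''\phi']_0^1=0 \quad \text{for every } \phi\in\Ho.$$
The boundary term at $x=1$ vanishes since $\phi'(1)=0$ in both cases; in the (WD) case the contribution at $x=0$ also vanishes automatically because $\phi'(0)=0$, and no further condition is needed since $D(A)$ in this case only demands $u'(0)=u'(1)=0$, already built into $\Ho$. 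In the (SD) case, choosing $\phi\in\Ho\cap C^\infty([0,1])$ with $\phi'(0)\neq 0$ (e.g.\ $\phi(x)=x(1-x)^2$) forces $(au'')(0)=0$, which is exactly the remaining $D(A)$-condition. Hence $u\in D(A)$ and $(u,u-f)\in D(\mathcal{A})$ is the required preimage, and Lumer--Phillips yields the generation of a contraction semigroup.
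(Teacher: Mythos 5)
Your proposal follows essentially the same route as the paper: dissipativity via the Gauss--Green identity \eqref{GF0}, surjectivity of $\mathcal I-\mathcal A$ via Lax--Milgram applied to exactly the same bilinear form on $\Ho$, recovery of $(au'')''=f+g-u$ by testing against $C_c^\infty(0,1)$, and identification of the residual boundary condition $(au'')(0)=0$ in the (SD) case by comparing the weak formulation with \eqref{GG} (your explicit test function $x(1-x)^2$ is a concrete instance of the paper's ``for all $z\in\Ho$'' argument). The one place you diverge is the density of the domain: the paper does not prove it directly, but invokes \cite[Corollary 3.20]{nagel}, by which a dissipative operator with $\mathrm{ran}(\mathcal I-\mathcal A)=\mathcal H_0$ on a reflexive (here Hilbert) space is \emph{automatically} densely defined. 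Your substitute --- that $C_c^\infty(0,1)$ is dense in $\Ho$ ``by a standard mollification/cut-off argument'' --- is the only under-justified step: in this degenerate weighted space the cut-off near $x=0$ produces commutator terms of order $\epsilon^{-1}a^{1/2}u'$ and $\epsilon^{-2}a^{1/2}u$ on $[\epsilon,2\epsilon]$ whose smallness is not automatic (for $a(x)\sim x^K$ the naive bound is only $O(1)$), so this claim would need a genuine argument. Since it can be bypassed entirely by the reflexive-space version of Lumer--Phillips that the paper uses, I would either supply that refinement or simply drop the density step and cite the stronger generation theorem.
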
	
	Thanks to the operator $(\mathcal{A},D(\mathcal{A}))$ and setting

	\[	\mathcal{U}(t):=\begin{pmatrix}
		y(t) \\
		y_t(t)
	\end{pmatrix}\quad\,\,\,\,\text{and}\quad\,\,\,\,\mathcal{U}_0:=\begin{pmatrix}
		y^0_T \\
		y^1_T
	\end{pmatrix},
	\] (\ref{(P_1)}) can be formulated as the  Cauchy problem
	\begin{equation}\label{Cauchy problem}
		\begin{cases}
			\dot{\mathcal{U}}(t)=\mathcal{A}\,\mathcal{U}(t), &t\ge 0,\\
			\mathcal{U}(0)=\mathcal{U}_0.
		\end{cases}
	\end{equation}
Thus, by Theorem \ref{th generazione}, if $\mathcal{U}_0\in\mathcal{H}_0$, the mild solution of (\ref{Cauchy problem}) is given by $\mathcal{U}(t)=T(t)\mathcal{U}_0$; if $\mathcal{U}_0\in D(\mathcal{A})$, then the solution $\mathcal{U}$ is more regular and the equation in (\ref{(P_1)}) holds for all $t\ge 0$. In particular, as in \cite[Proposition 3.15]{daprato}, one has the following theorem.
	\begin{Theorem}\label{Theorem 2.6}
		Assume $a$ (WD) or (SD).
		If $(y^0_T,y^1_T)\in\mathcal{H}_0$, then there exists a unique mild solution
		\begin{equation*}
			y\in \mathcal{C}^1([0,+\infty);L^2(0,1))\cap \mathcal{C}([0,+\infty);\Ho)
		\end{equation*}
		of (\ref{(P_1)}) which depends continuously on the initial data $(y^0_T,y^1_T)\in \mathcal{H}_0$. Moreover, if $(y^0_T,y^1_T)\in D(\mathcal{A})$, then the solution $y$ is classical, in the sense that
		\begin{equation*}
			y\in \mathcal{C}^2([0,+\infty);L^2(0,1))\cap \mathcal{C}^1([0,+\infty);\Ho)\cap \mathcal{C}([0,+\infty);D(A))
		\end{equation*}
		and the equation of (\ref{(P_1)}) holds for all $t\ge 0$.
	\end{Theorem}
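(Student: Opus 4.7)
The plan is to exploit the abstract Cauchy problem formulation \eqref{Cauchy problem} together with Theorem \ref{th generazione}, which asserts that $(\mathcal{A}, D(\mathcal{A}))$ generates a $C_0$-semigroup of contractions $(T(t))_{t \ge 0}$ on $\mathcal{H}_0$. From classical semigroup theory (precisely the statement in \cite[Proposition 3.15]{daprato} cited in the theorem), one obtains two regularity classes for $\mathcal{U}(t) = T(t)\mathcal{U}_0$ according to the initial datum, and the task reduces to translating the vector-valued regularity into the scalar statements for $y$ via the product structure $\mathcal{H}_0 = \Ho \times L^2(0,1)$ and $D(\mathcal{A}) = D(A) \times \Ho$.

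For the first assertion, I would let $(y^0_T, y^1_T) \in \mathcal{H}_0$ and define the mild solution $\mathcal{U}(t) := T(t)\mathcal{U}_0$. Strong continuity yields $\mathcal{U} \in \mathcal{C}([0,+\infty); \mathcal{H}_0)$, which, read componentwise, gives $y \in \mathcal{C}([0,+\infty); \Ho)$ and $y_t \in \mathcal{C}([0,+\infty); L^2(0,1))$; the latter is exactly $y \in \mathcal{C}^1([0,+\infty); L^2(0,1))$. Continuous dependence on initial data follows at once from contractivity, since
\[
\|(y(t), y_t(t))\|_{\mathcal{H}_0} \le \|(y^0_T, y^1_T)\|_{\mathcal{H}_0}
\]
for all $t \ge 0$.

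For the second assertion, I would assume $(y^0_T, y^1_T) \in D(\mathcal{A})$. The standard theory then provides
\[
\mathcal{U} \in \mathcal{C}^1([0,+\infty); \mathcal{H}_0) \cap \mathcal{C}([0,+\infty); D(\mathcal{A})),
\]
with the identity $\dot{\mathcal{U}}(t) = \mathcal{A}\mathcal{U}(t)$ holding for every $t \ge 0$. The inclusion $\mathcal{U} \in \mathcal{C}([0,+\infty); D(\mathcal{A}))$ yields both $y \in \mathcal{C}([0,+\infty); D(A))$ and $y_t \in \mathcal{C}([0,+\infty); \Ho)$, while $\mathcal{U} \in \mathcal{C}^1([0,+\infty); \mathcal{H}_0)$ gives $y_{tt} \in \mathcal{C}([0,+\infty); L^2(0,1))$, i.e. $y \in \mathcal{C}^2([0,+\infty); L^2(0,1))$. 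Finally, reading $\dot{\mathcal{U}} = \mathcal{A}\mathcal{U}$ in its second component produces $y_{tt} = -Ay = -(ay_{xx})_{xx}$ pointwise in $t$, so the PDE in \eqref{(P_1)} is satisfied classically for all $t \ge 0$. Since Theorem \ref{th generazione} is already in hand, there is no genuine obstacle; the only care required is in tracking how the product structure of $\mathcal{H}_0$ and $D(\mathcal{A})$ distributes regularity between $y$ and $y_t$, the substantive analytic work having been absorbed into the generation result proved in the appendix.
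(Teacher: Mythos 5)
Your proposal is correct and is essentially the argument the paper intends: the paper gives no separate proof of this theorem, simply invoking Theorem \ref{th generazione} together with the standard semigroup result \cite[Proposition 3.15]{daprato}, which is exactly the componentwise reading of the regularity of $T(t)\mathcal{U}_0$ that you carry out. The only point you pass over lightly is the identification of the second component of the mild solution with $y_t$ (via $y(t)=y^0_T+\int_0^t v(s)\,ds$), but this is standard for the first-order reduction and does not affect correctness.
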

	\begin{Remark}\label{remark revers}
		Due to the reversibility (in time) of the differential equation, solutions exist with the same regularity also for $t<0$.
	\end{Remark}

	\subsection{Energy estimates}
	In this subsection we prove some estimates from below and from above of the energy associated to the solution of \eqref{(P_1)} in order to obtain an observability inequality. First of all we give the  definition of the energy.
\begin{Definition}
		Let $y$ be a mild solution of (\ref{(P_1)}) and consider its energy given by the continuous function defined as
	\[
		E_y(t):=\frac{1}{2}\int_0^1 \Bigl (y^2_t(t,x)+a(x)y^2_{xx}(t,x) \Bigr )dx\quad\,\,\,\,\,\,\forall\;t\ge 0.
	\]
\end{Definition}
	The definition above is natural and guarantees that the classical conservation of the energy still holds true also in this degenerate situation.
	
	\begin{Theorem}\label{teorema energia costante}
		Assume $a$ (WD) or (SD) and let $y$ be a mild solution of (\ref{(P_1)}). Then
		\[
			E_y(t)=E_y(0)\quad\,\,\,\,\,\,\,\forall\;t\ge 0.
		\]
	\end{Theorem}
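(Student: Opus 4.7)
The strategy is the classical energy-differentiation argument, reducing the mild case to the classical case by density.

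I would begin by assuming $(y^0_T,y^1_T)\in D(\mathcal A)$, so that by Theorem \ref{Theorem 2.6} the solution is classical and $(y(t),y_t(t))\in D(\mathcal A)$ for all $t\ge 0$. In particular, $y(t)\in D(A)$ and $y_t(t)\in H^2_{a,0}(0,1)$, and all the time derivatives below are well defined and can be interchanged with the spatial integrals. Differentiating directly under the integral sign,
\begin{equation*}
\frac{d}{dt}E_y(t)=\int_0^1\bigl(y_t y_{tt}+a y_{xx} y_{txx}\bigr)dx.
\end{equation*}
Using the equation $y_{tt}=-(ay_{xx})_{xx}$ in the first term,
\begin{equation*}
\frac{d}{dt}E_y(t)=-\int_0^1(ay_{xx})_{xx}\,y_t\,dx+\int_0^1 ay_{xx}\,y_{txx}\,dx.
\end{equation*}
Since $(y(t),y_t(t))\in D(\mathcal A)$, formula \eqref{GF0} applies with $u=y(t)$ and $v=y_t(t)$, giving
\begin{equation*}
\int_0^1(ay_{xx})_{xx}\,y_t\,dx=\int_0^1 ay_{xx}\,y_{txx}\,dx.
\end{equation*}
Thus $\frac{d}{dt}E_y(t)\equiv 0$ and $E_y(t)=E_y(0)$ for every $t\ge 0$.

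To extend the conclusion to a general mild solution with $(y^0_T,y^1_T)\in\mathcal H_0$, I would use the density of $D(\mathcal A)$ in $\mathcal H_0$ (guaranteed by Theorem \ref{th generazione}) and the continuous dependence on the initial datum provided by the contraction semigroup $(T(t))_{t\ge 0}$. Choose a sequence $(y^{0,n}_T,y^{1,n}_T)\in D(\mathcal A)$ converging to $(y^0_T,y^1_T)$ in $\mathcal H_0$, and let $y^n$ be the associated classical solutions; by the previous step, $E_{y^n}(t)=E_{y^n}(0)$ for all $t\ge 0$ and all $n$. Since the map $(y^0_T,y^1_T)\mapsto(y(t),y_t(t))$ is continuous from $\mathcal H_0$ into itself (being a contraction), one has $\sqrt{a}\,y^n_{xx}(t,\cdot)\to\sqrt{a}\,y_{xx}(t,\cdot)$ in $L^2(0,1)$ and $y^n_t(t,\cdot)\to y_t(t,\cdot)$ in $L^2(0,1)$, so that $E_{y^n}(t)\to E_y(t)$ for each $t\ge 0$. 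Passing to the limit in $E_{y^n}(t)=E_{y^n}(0)$ yields the conclusion.

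The only subtle step is verifying the integration by parts identity free of boundary contributions, but this is precisely the content of \eqref{GF0}, which was established exactly for pairs in $D(\mathcal A)$; the delicate vanishing of the boundary trace $\lim_{\delta\to 0^+}(au''v')(\delta)$ in the strongly degenerate case is already built into that formula. Hence the main obstacle has been handled upstream, and the proof reduces to the differentiation-and-density argument sketched above.
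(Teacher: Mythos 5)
Your proof is correct and follows essentially the same route as the paper: for classical solutions the paper multiplies the equation by $y_t$, integrates over $(0,1)$ and applies \eqref{GF0} to get $\frac{d}{dt}E_y(t)=0$, which is the same computation as your differentiation of $E_y$; for mild solutions the paper simply invokes the density/continuity argument (citing \cite{CF_Beam}) that you spell out explicitly. No gaps.
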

	\begin{proof}
		First of all assume that $y$ is a classical solution. Then multiplying the equation
		\begin{equation*}
			y_{tt}+(ay_{xx})_{xx}=0
		\end{equation*}
		by $y_t$, integrating over $(0,1)$ and using the formula of integration by parts (\ref{GF0}), one has
		\begin{equation*}
			\begin{aligned}
				0&=\frac{1}{2}\int_0^1 \Bigl (y^2_t\Bigr )_tdx+\int_0^1 (ay_{xx})_{xx}\,y_tdx\\
				&=\frac{1}{2}\frac{d}{dt}\Biggl (\int_0^1\Bigl (y^2_t+ay^2_{xx}\Bigr )dx\Biggr )=\frac{d}{dt}E_y(t).
			\end{aligned}
		\end{equation*}
		Consequently the energy associated to $y$ is constant.	
If $y$ is a mild solution, we can proceed as in \cite{CF_Beam}.
	\end{proof}

	Now we prove an inequality for the energy which we will use in the next subsection to establish the controllability result. First of all, we start proving the following preliminary result.
	\begin{Theorem}\label{Teorema 1}
		Assume $a$ (WD) or (SD). If $y$ is a classical solution of (\ref{(P_1)}), then $y_{xx}(\cdot,1)\in L^2(0,T)$ for any $T>0$ and
		\begin{equation}\label{prima uguaglianza0}
			\begin{aligned}
				\frac{1}{2}a(1)\int_0^T y^2_{xx}(t,1)dt&=\int_0^1\Bigl [y_tx^2y_x \Bigr ]^{t=T}_{t=0}dx+\int_{Q_T}xy^2_tdx\,dt\\
				&+\int_{Q_T}\Bigl (3xa-\frac{x^2}{2}a'\Bigr )y^2_{xx}dx\,dt-\int_{Q_T}a'y^2_xdx\,dt.
			\end{aligned}
		\end{equation}
	\end{Theorem}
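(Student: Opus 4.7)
The plan is to carry out a direct multiplier identity with the multiplier $x^2 y_x$, which is designed to vanish at $x=0$ (with a good rate) while producing the factor $a(1)$ at $x=1$. I would multiply the equation $y_{tt}+(ay_{xx})_{xx}=0$ by $x^2 y_x$, integrate over $Q_T$, and treat the temporal and spatial contributions separately.

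For the temporal term $\int_{Q_T} y_{tt}\, x^2 y_x\, dx\, dt$, I would first integrate by parts in $t$ to produce $\int_0^1 [y_t\, x^2 y_x]_{t=0}^{t=T}\,dx - \int_{Q_T} y_t\, x^2 y_{tx}\, dx\, dt$, and then rewrite the remaining integrand via $y_t y_{tx}=\frac{1}{2}(y_t^2)_x$ and integrate by parts in $x$. The $x=1$ boundary term vanishes because the Dirichlet condition $y(t,1)=0$ forces $y_t(t,1)=0$, and the $x=0$ term vanishes trivially thanks to the factor $x^2$. This produces $\int_0^1 [y_t\, x^2 y_x]_{t=0}^{t=T}\,dx + \int_{Q_T} x y_t^2\, dx\, dt$.

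For the spatial term $\int_0^1 (ay_{xx})_{xx}\, x^2 y_x\, dx$, I would integrate by parts twice. After expanding $(x^2 y_x)_x = 2xy_x + x^2 y_{xx}$, the computation splits into two branches. In the first, the piece $-2\int (ay_{xx})_x x y_x\, dx$ becomes $2\int a y_{xx}(y_x + x y_{xx})\, dx$ after another IBP; the subterm $\int a y_{xx} y_x\, dx$ I would then rewrite as $-\frac{1}{2}\int a' y_x^2\, dx$ via one more IBP. In the second branch, $-\int (ay_{xx})_x x^2 y_{xx}\, dx$ I would split using $(ay_{xx})_x = a' y_{xx} + a y_{xxx}$ and handle the $y_{xxx}$ contribution as $-\frac{1}{2}\int a x^2 (y_{xx}^2)_x\, dx$; a final IBP then produces the key term $-\frac{1}{2} a(1) y_{xx}^2(t,1)$. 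Collecting all coefficients of $y_{xx}^2$ gives exactly $(3ax - \frac{x^2}{2} a') y_{xx}^2$. Combining the spatial and temporal pieces and solving for $\frac{1}{2} a(1) \int_0^T y_{xx}^2(t,1)\, dt$ yields the stated identity; the $L^2(0,T)$-regularity of $y_{xx}(\cdot,1)$ is then automatic, since every other term on the right-hand side is finite for a classical solution.

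The main obstacle is the rigorous justification of the boundary traces at $x=0$, especially in the (SD) case where $y_{xx}$ itself may be unbounded near $0$. The standard remedy is to perform each integration by parts on $(\delta,1)$ instead of $(0,1)$ and then send $\delta \to 0^+$, invoking the appropriate items of Lemmas \ref{boundaryconditions_WD} and \ref{boundaryconditions_SD}: item 1, combined with the factor $x^2$, kills $(ay_{xx})_x \, x^2 y_x$ at $0$; items 2 and 3 kill $a y_{xx}\, x y_x$ and $a y_x^2$ at $0$; and item 4, together with the extra factor of $\delta$, kills $\delta^2 a(\delta) y_{xx}^2(\delta)$. Note that in the (WD) case all these boundary traces are essentially straightforward thanks to $y_x(t,0)=0$ and $ay_{xx}\in \mathcal{C}^1[0,1]$, so the delicate work is really confined to the (SD) regime, where the conditions $(ay_{xx})(t,0)=0$ and $y(t,0)=0$ are what make each of the above limits vanish.
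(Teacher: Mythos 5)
Your proposal is, in substance, the paper's own proof: the same multiplier $x^2y_x$, the same integration by parts in $t$ followed by $y_ty_{tx}=\tfrac12(y_t^2)_x$, the same double integration by parts in $x$ on $(\delta,1)$ with the limit $\delta\to0^+$ justified by the relevant items of Lemmas \ref{boundaryconditions_WD} and \ref{boundaryconditions_SD}, and the same bookkeeping yielding the coefficient $3xa-\tfrac{x^2}{2}a'$ and the boundary term $-\tfrac12 a(1)y_{xx}^2(t,1)$. The only point you gloss over is the claim that ``every other term on the right-hand side is finite'': in the weakly degenerate case $a'$ need not be bounded near $x=0$ (only $a\in\mathcal C^1(0,1]$), so the convergence of $\int_{Q_T}a'y_x^2\,dx\,dt$ as $\delta\to0$ is not automatic; the paper establishes it by applying the Hardy--Poincar\'e inequality (Proposition \ref{HP}) to $y_x$, using $y_x(t,0)=0$, to get $\int_{Q_T}|a'|y_x^2\,dx\,dt\le \frac{4K}{(1-K)^2}\int_{Q_T}ay_{xx}^2\,dx\,dt$ (and, similarly, uses $x|a'|\le Ka$ to control $x^2a'y_{xx}^2$). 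Adding that one estimate makes your argument complete.
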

	\begin{proof}
		Multiplying the equation in (\ref{(P_1)}) by $x^2y_x$ and integrating over $Q_T=(0,T)\times (0,1)$, we obtain
		\[
			\begin{aligned}
				0&=\int_{Q_T}y_{tt}x^2y_xdx\,dt+\int_{Q_T}x^2y_x(ay_{xx})_{xx}dx\,dt\\
				&=\int_0^1\Bigl [y_tx^2y_x \Bigr ]^{t=T}_{t=0}dx-\frac{1}{2}\int_{Q_T}x^2(y^2_t)_xdx\,dt+\int_{Q_T}x^2y_x(ay_{xx})_{xx}dx\,dt\\
				&=\int_0^1\Bigl [y_tx^2y_x \Bigr ]^{t=T}_{t=0}dx-\frac{1}{2}\int_0^T\Bigl [x^2y^2_t \Bigr ]^{x=1}_{x=0}dt+\int_{Q_T}x y^2_tdx\,dt+\int_{Q_T}x^2y_x(ay_{xx})_{xx}dx\,dt\\
				&=\int_0^1\Bigl [y_tx^2y_x \Bigr ]^{t=T}_{t=0}dx+\int_{Q_T}x y^2_tdx\,dt+\int_{Q_T}x^2y_x(ay_{xx})_{xx}dx\,dt
			\end{aligned}
		\]
		since, thanks to  the boundary conditions of $y$, we have $y_t(t,0)=0= y_t(t,1)$.
Now, consider the term $\int_{Q_T}x^2y_x(ay_{xx})_{xx}dx\,dt$, which is clearly well defined, and let $\delta >0$. Then
		\begin{equation}\label{secondo integrale}
			\int_{Q_T}x^2y_x(ay_{xx})_{xx}dx\,dt=\int_0^T\int_0^\delta x^2y_x(ay_{xx})_{xx}dx\,dt+\int_0^T\int_\delta^1x^2y_x(ay_{xx})_{xx}dx\,dt.
		\end{equation}
	By the H\"older's inequality, $x^2y_x (ay_{xx})_{xx} \in L^1(0,1)$, hence
		\begin{equation*}
			\lim_{\delta\to 0}\int_0^T\int_0^\delta x^2y_x(ay_{xx})_{xx}dx\,dt= 0,
		\end{equation*}
		by the absolute continuity of the integral. In the next steps, we will estimate the second term of (\ref{secondo integrale}). Setting $I:= [\delta, 1]$, since $y \in D(A)$, we have $ay_{xx}\in H^2(I)$ and, in particular, $y_{xx} \in H^2(I)$ since $a(x) \neq 0$ for all $x \in I$. Hence, we can integrate by parts
		\begin{equation}\label{espressione con BT}
			\begin{aligned}
				&\int_0^T\int_\delta^1x^2y_x(ay_{xx})_{xx}dx\,dt=\int_0^T[x^2y_x(ay_{xx})_{x}]^{x=1}_{x=\delta}dt-\int_0^T\int_\delta^1(x^2y_x)_x(ay_{xx})_{x}dx\,dt\\
				&=\int_0^T[x^2y_x(ay_{xx})_{x}]^{x=1}_{x=\delta}dt-2\int_0^T\int_\delta^1xy_x(ay_{xx})_{x}dx\,dt-\int_0^T\int_\delta^1 x^2y_{xx}(ay_{xx})_{x}dx\,dt\\
				&=\int_0^T[x^2y_x(ay_{xx})_{x}]^{x=1}_{x=\delta}dt-2\int_0^T[xy_xay_{xx}]^{x=1}_{x=\delta}dt+2\int_0^T\int_\delta^1(xy_x)_xay_{xx}dx\,dt\\
				&-\int_0^T[x^2ay^2_{xx}]^{x=1}_{x=\delta}dt+\int_0^T\int_\delta^1ay_{xx}(x^2y_{xx})_xdx\,dt\\
				&=\int_0^T[x^2y_x(ay_{xx})_{x}]^{x=1}_{x=\delta}dt-2\int_0^T[xy_xay_{xx}]^{x=1}_{x=\delta}dt-\int_0^T[x^2ay^2_{xx}]^{x=1}_{x=\delta}dt\\
				&+2\int_0^T\int_\delta^1ay_xy_{xx}dx\,dt+2\int_0^T\int_\delta^1xay^2_{xx}dx\,dt+2\int_0^T\int_\delta^1xay^2_{xx}dx\,dt\\
				&+\int_0^T\int_\delta^1x^2ay_{xx}y_{xxx}dx\,dt\\
				&=\int_0^T[x^2y_x(ay_{xx})_{x}]^{x=1}_{x=\delta}dt-2\int_0^T[xy_xay_{xx}]^{x=1}_{x=\delta}dt-\int_0^T[x^2ay^2_{xx}]^{x=1}_{x=\delta}dt\\
				&+4\int_0^T\int_\delta^1xay^2_{xx}dx\,dt+\int_0^T\int_\delta^1a(y^2_x)_xdx\,dt+\frac{1}{2}\int_0^T\int_\delta^1x^2a(y^2_{xx})_xdx\,dt
\end{aligned}
		\end{equation}
\[
\begin{aligned}
				&=\int_0^T[x^2y_x(ay_{xx})_{x}]^{x=1}_{x=\delta}dt-2\int_0^T[xy_xay_{xx}]^{x=1}_{x=\delta}dt-\int_0^T[x^2ay^2_{xx}]^{x=1}_{x=\delta}dt\\
				&+4\int_0^T\int_\delta^1xay^2_{xx}dx\,dt
				+\int_0^T[ay^2_{x}]^{x=1}_{x=\delta}dt-\int_0^T\int_\delta^1a'y^2_xdx\,dt\\
				&+\frac{1}{2}\int_0^T[x^2ay^2_{xx}]^{x=1}_{x=\delta}dt-\frac{1}{2}\int_0^T\int_\delta^1(2xa+x^2a')y^2_{xx}dx\,dt\\
				&=\int_0^T[x^2y_x(ay_{xx})_{x}]^{x=1}_{x=\delta}dt-2\int_0^T[xy_xay_{xx}]^{x=1}_{x=\delta}dt-\frac{1}{2}\int_0^T[x^2ay^2_{xx}]^{x=1}_{x=\delta}dt\\
				&+\int_0^T[ay^2_{x}]^{x=1}_{x=\delta}dt
				-\int_0^T\int_\delta^1a'y^2_xdx\,dt+\int_0^T\int_\delta^1\Bigl (3xa-\frac{x^2}{2}a' \Bigr )y^2_{xx}dx\,dt.
			\end{aligned}
		\]
		Observe that in the strongly degenerate case  $\int_0^T\int_0^1a'y^2_xdx\,dt\in \R$, being $y_x \in L^2(0,1)$ and $a \in \mathcal C^1[0,1]$. In particular,
			by \eqref{stimau'}, we have
	\begin{equation}\label{quarto}
	\int_{Q_T}| a' y_x^2|dx\,dt \le \max_{x \in [0,1]}|a'(x)|\frac{1}{a(1)(2-K)} \int_{Q_T} ay_{xx}^2dx\,dt.
	\end{equation}
	In the weakly degenerate case, it results
	\[
	\int_{Q_T}| a' y_x^2|dx\,dt \le \int_{Q_T}\frac{1}{x^2}x^2 |a'| y_x^2dx\,dt  \le K\int_{Q_T} \frac{a}{x^2} y_x^2dx\,dt.
	\]
Thus, we can apply Proposition \ref{HP} (where $\theta =K \in (0,1)$) to $y_x$, obtaining
	\begin{equation}\label{quartoWD}
	\int_{Q_T}| a' y_x^2|dx\,dt  \le K\int_{Q_T} \frac{a}{x^2} y_x^2dx\,dt \le \frac{4K}{(1-K)^2} \int_{Q_T} ay_{xx}^2dx\,dt.
	\end{equation}
	In every case $\int_0^T\int_0^1a'y^2_xdx\,dt\in \R$ and
	\[
	\int_{Q_T}| a' y_x^2|dx\,dt  \le \max\left\{\frac{\max_{x \in [0,1]}|a'(x)|}{a(1)(2-K)}, \frac{4K}{(1-K)^2}\right\} \int_{Q_T} ay_{xx}^2dx\,dt.
	\]
			Now, consider the term $\ds\int_0^T\int_\delta^1\Bigl (3xa-\frac{x^2}{2}a' \Bigr )y^2_{xx}dx\,dt$.  Clearly, $xay^2_{xx}\in L^1(0,1)$; moreover 
		\[
	\left|	\frac{x^2}{2}a' y_{xx}^2\right| \le \frac{K}{2}ax  y_{xx}^2 \in L^1(0,1). 
		\]
	Thus, by the absolute continuity of the integral, 
	\[\displaystyle\lim_{\delta\to 0}\int_0^T\int_\delta ^1\Bigl (3xa-\frac{x^2}{2}a' \Bigr )y^2_{xx}dx\,dt= \int_0^T\int_0^1 \Bigl (3xa-\frac{x^2}{2}a' \Bigr )y^2_{xx}dx\,dt.\]
		It remains to estimate the boundary terms in (\ref{espressione con BT}). To this aim observe that, thanks to the boundary conditions of $y$,
		\begin{equation}\label{BT}
			\begin{aligned}
				&[x^2y_x(ay_{xx})_{x}]^{x=1}_{x=\delta}-2[xy_xay_{xx}]^{x=1}_{x=\delta}-\frac{1}{2}[x^2ay^2_{xx}]^{x=1}_{x=\delta}+[ay^2_{x}]^{x=1}_{x=\delta}\\
				&=-\delta^2y_x(t,\delta)(ay_{xx})_x(t,\delta)+2\delta y_x(t,\delta)(ay_{xx})(t,\delta)-\frac{1}{2}(ay^2_{xx})(t,1)\\&+\frac{1}{2}\delta^2(ay^2_{xx})(t,\delta)-a(\delta)y^2_x(t,\delta).
			\end{aligned}
		\end{equation}
	 Thanks to Lemma \ref{boundaryconditions_WD}, if $a$ is (WD), or Lemma \ref{boundaryconditions_SD}, if $a$ is (SD), one has that the boundary terms in \eqref{BT} reduce to $-\ds\frac{1}{2}(ay^2_{xx})(t,1)$.		
	Hence, by \eqref{espressione con BT}, we have
		\begin{equation*}
		\begin{aligned}
			\lim_{\delta\to 0}\int_0^T\int_\delta^1x^2y_x(ay_{xx})_{xx}dx\,dt&=-\frac{1}{2}a(1)\int_0^Ty^2_{xx}(t,1)dt-\int_0^T\int_0^1a'y^2_xdx\,dt\\
			& +\int_0^T\int_0^1\Bigl (3xa-\frac{x^2}{2}a' \Bigr )y^2_{xx}dx\,dt
			\end{aligned}
		\end{equation*}
		and (\ref{prima uguaglianza0}) follows.
	\end{proof}

Thanks to the previous equality and Proposition \ref{HP} one can prove an  estimate from below on the energy using the term $\displaystyle \int_0^T y^2_{xx}(t,1)dt$.

\begin{Theorem}
	Assume $a$ (WD) or (SD). If $y$ is a mild solution of (\ref{(P_1)}), then
	\[
			a(1)\int_0^Ty^2_{xx}(t,1)dt\le 4\left(T\max\left\{1, \frac{6+K}{2}+ \frac{4K}{(1-K)^2}\right\}+\max\Biggl \{
\frac{1}{a(1)(2-K)},1 \Biggr \}\right) E_y(0),
	\]
	if $a$ is (WD), and
	\[
			a(1)\int_0^Ty^2_{xx}(t,1)dt\le 4\left(T\max\left\{1, \frac{6+K}{2}+\frac{\max_{x \in [0,1]}|a'(x)| }{a(1)(2-K)} \right\}+\max\Biggl \{
\frac{1}{a(1)(2-K)},1 \Biggr \}\right) E_y(0),
	\]
	if $a$ is (SD).
\end{Theorem}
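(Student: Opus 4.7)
The plan is to start from the exact identity \eqref{prima uguaglianza0} of Theorem \ref{Teorema 1}, which expresses $\frac{1}{2}a(1)\int_0^T y_{xx}^2(t,1)\,dt$ as a sum of four space-time integrals, and to bound each of them in terms of $E_y(0)$ by systematically using the conservation of energy (Theorem \ref{teorema energia costante}) together with Proposition \ref{norms} and Proposition \ref{HP}. Since \eqref{prima uguaglianza0} is stated for classical solutions, I would first derive the estimate assuming $y$ is classical and then extend it to mild solutions by density, exploiting the continuous dependence on initial data from Theorem \ref{Theorem 2.6}.

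For the boundary term $\int_0^1 [y_t x^2 y_x]_{t=0}^{t=T}\,dx$, I would use $x^2\le 1$ and Young's inequality to reduce to $\int_0^1 (y_t^2+y_x^2)\,dx$. Inequality \eqref{stimau'} in Proposition \ref{norms} gives $\|y_x\|_{L^2}^2\le \frac{1}{a(1)(2-K)}\|\sqrt{a}\,y_{xx}\|_{L^2}^2$, so collecting the two $L^2$-norms with the factor $\max\bigl\{1,\tfrac{1}{a(1)(2-K)}\bigr\}$ produces a pointwise-in-$t$ bound of the form $2\max\bigl\{1,\tfrac{1}{a(1)(2-K)}\bigr\}E_y(t)$. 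By Theorem \ref{teorema energia costante} one has $E_y(t)=E_y(0)$, and the contributions at $t=0$ and $t=T$ add up to the second summand of the announced inequality.

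For the three remaining integrals over $Q_T$ I would group the estimates. On the $xy_t^2$-term use $x\le 1$; for the $(3xa-\tfrac{x^2}{2}a')y_{xx}^2$-term observe that $|a'|\le K\,a/x$ by definition \eqref{sup}, whence $|3xa-\tfrac{x^2}{2}a'|\le \tfrac{6+K}{2}\,xa\le \tfrac{6+K}{2}\,a$; and for $\int_{Q_T}a' y_x^2\,dx\,dt$ reuse the two bounds already established inside the proof of Theorem \ref{Teorema 1}, namely \eqref{quartoWD} (relying on Proposition \ref{HP} with $\theta=K$) in the (WD) case with constant $C=\frac{4K}{(1-K)^2}$, and \eqref{quarto} (relying on \eqref{stimau'}) in the (SD) case with constant $C=\frac{\max_{[0,1]}|a'|}{a(1)(2-K)}$. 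Summing these three estimates yields
\[
\int_{Q_T} y_t^2\,dx\,dt + \Bigl(\tfrac{6+K}{2}+C\Bigr)\int_{Q_T} a\,y_{xx}^2\,dx\,dt \le \max\Bigl\{1,\tfrac{6+K}{2}+C\Bigr\}\int_{Q_T}(y_t^2+a y_{xx}^2)\,dx\,dt,
\]
which by Theorem \ref{teorema energia costante} is $2T\max\{1,\tfrac{6+K}{2}+C\}E_y(0)$.

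Adding the two blocks and multiplying by $2$ to clear the $\tfrac{1}{2}$ in \eqref{prima uguaglianza0} gives exactly the two announced inequalities, once $C$ is specialised to each regime. The main subtlety will be the extension from classical to mild solutions, since $y_{xx}(\cdot,1)\in L^2(0,T)$ is not a priori meaningful for a generic mild solution. I would obtain it as a byproduct: approximating $(y^0_T,y^1_T)\in\mathcal H_0$ by a sequence $(y^{0,n}_T,y^{1,n}_T)\in D(\mathcal A)$, applying the estimate to the differences $y^n-y^m$ (whose energy is controlled by the $\mathcal H_0$-distance of the initial data), one gets that $y^n_{xx}(\cdot,1)$ is Cauchy in $L^2(0,T)$; its limit defines $y_{xx}(\cdot,1)$, and passing to the limit in the inequality closes the argument.
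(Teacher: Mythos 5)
Your proposal is correct and follows essentially the same route as the paper: it starts from the identity \eqref{prima uguaglianza0}, bounds the boundary term via Young's inequality, \eqref{stimau'} and energy conservation to get the $4\max\{1,\tfrac{1}{a(1)(2-K)}\}E_y(0)$ summand, and bounds the distributed terms using $x|a'|\le Ka$ together with \eqref{quarto} in the (SD) case and \eqref{quartoWD} (Hardy--Poincar\'e) in the (WD) case, exactly as in \eqref{secondo}, \eqref{terzo} and \eqref{primo}. The only difference is that you spell out the density/Cauchy-sequence argument for passing from classical to mild solutions, which the paper delegates to the reference \cite{CF_Beam}; your sketch of that step is the standard and correct one.
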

\begin{proof}
As a first step we estimate the distributed terms in \eqref{prima uguaglianza0}.
 Using the fact that $x|a'|\le Ka$, by (\ref{crescente}), we find
	\begin{equation}\label{secondo}
		\begin{aligned}
			\Biggl |\int_{Q_T}\Bigl (3xa-\frac{x^2a'}{2}\Bigr )y^2_{xx}dx\,dt \Biggr |&\le \frac{6+K}{2}\int_{Q_T}xay^2_{xx}dx\,dt\le \frac{6+K}{2}\int_{Q_T}ay^2_{xx}dx\,dt.
		\end{aligned}
	\end{equation}
	Clearly, 
	\begin{equation}\label{terzo}
		\int_{Q_T}xy^2_{t}dx\,dt\le \int_{Q_T}y^2_{t}dx\,dt.
	\end{equation}
	Now, we will estimate the boundary terms in \eqref{prima uguaglianza0}. By \eqref{stimau'}, one has
\[
		\begin{aligned}
\Biggl |\int_0^1x^2y_x(\tau,x)y_t(\tau,x)dx \Biggr |&\le\frac{1}{2}\int_0^1x^4y^2_x(\tau,x)dx+\frac{1}{2}\int_0^1y^2_t(\tau,x)dx\\
&\le \frac{1}{2}
\frac{1}{a(1)(2-K)} \int_0^1 a(x)y_{xx}^2(\tau,x)dx\,dt+\frac{1}{2}\int_0^1y^2_t(\tau,x)dx
\end{aligned}
	\]
	for all $\tau\in [0,T]$. 
	By Theorem \ref{teorema energia costante}, we get
	\begin{equation}\label{primo}
		\begin{aligned}
			\Biggl |\int_0^1\Bigl [x^2y_x(\tau,x)y_t(\tau,x) \Bigr ]^{\tau =T}_{\tau =0}dx\Biggr |&\le  \frac{1}{2}
\frac{1}{a(1)(2-K)}\int_0^1a(x)y^2_{xx}(T,x)dx+\frac{1}{2}\int_0^1y^2_t(T,x)dx\\
			&+ \frac{1}{2}
\frac{1}{a(1)(2-K)}\int_0^1a(x)y^2_{xx}(0,x)dx+\frac{1}{2}\int_0^1y^2_t(0,x)dx\\
			&\le 2\max\Biggl \{
\frac{1}{a(1)(2-K)},1 \Biggr \}E_y(0).
		\end{aligned}
	\end{equation}
	By \eqref{prima uguaglianza0}, \eqref{quarto}, \eqref{quartoWD}, (\ref{secondo}), (\ref{terzo}) and  (\ref{primo}), 
	we have
	\[
			\begin{aligned}
	\frac{1}{2}a(1)\int_0^Ty_{xx}^2(t,1)dt &\le \int_{Q_T}y_t^2dxdt + \left(\frac{6+K}{2}+ 
	\frac{4K}{(1-K)^2}\right)\int_{Q_T}ay_{xx}^2dx\,dt \\
	&+2\max\Biggl \{
\frac{1}{a(1)(2-K)},1 \Biggr \}E_y(0),
	\end{aligned}
	\]
	in the weakly degenerate case,
	and
		\[
			\begin{aligned}
	\frac{1}{2}a(1)\int_0^Ty_{xx}^2(t,1)dt &\le \int_{Q_T}y_t^2dxdt + \left(\frac{6+K}{2}+\frac{\max_{x \in [0,1]}|a'(x)| }{a(1)(2-K)}\right)\int_{Q_T}ay_{xx}^2dx\,dt \\
	&+2\max\Biggl \{
\frac{1}{a(1)(2-K)},1 \Biggr \}E_y(0),
	\end{aligned}
	\]
	in the strongly one. Thus, the thesis follows if $y$ is a classical solution; on the other hand, if $y$ is a mild solution, we can proceed as in \cite{CF_Beam}.
\end{proof}
	
In order to obtain an estimate from above on the energy, the next preliminary result is useful.
	\begin{Theorem}
		Assume $a$ (WD) or (SD). If $y$ is a classical solution of (\ref{(P_1)}), then $y_{xx}(\cdot,1)\in L^2(0,T)$ for any $T>0$ and
		\begin{equation}\label{seconda uguaglianza}
			\begin{aligned}
				\frac{1}{2}a(1)\int_0^T y^2_{xx}(t,1)dt&=\int_0^1[xy_ty_x ]^{t=T}_{t=0}dx+\frac{1}{2}\int_{Q_T}y^2_tdx\,dt+\frac{1}{2}\int_{Q_T}(3a-xa')y^2_{xx}dx\,dt.
			\end{aligned}
		\end{equation}
	\end{Theorem}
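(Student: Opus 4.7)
The strategy parallels the proof of Theorem 3.2, but with the multiplier $xy_x$ in place of $x^2y_x$. The plan is to multiply the equation $y_{tt}+(ay_{xx})_{xx}=0$ by $xy_x$, integrate over $Q_T$, and identify each piece with a term in the claimed identity.

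First, I would treat the time term. Integration by parts in $t$, together with $y_t(t,0)=y_t(t,1)=0$ (which follows from $y(t,0)=y(t,1)=0$), gives
\[
\int_{Q_T} y_{tt}\,xy_x\,dx\,dt=\int_0^1[xy_ty_x]_{t=0}^{t=T}dx-\frac{1}{2}\int_0^T[xy_t^2]_{x=0}^{x=1}dt+\frac{1}{2}\int_{Q_T}y_t^2\,dx\,dt,
\]
and the boundary term in $x$ vanishes.

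Next, the spatial term $\int_{Q_T}xy_x(ay_{xx})_{xx}\,dx\,dt$ must be regularized: split the $x$-integral as $\int_0^\delta+\int_\delta^1$, observe by H\"older that $xy_x(ay_{xx})_{xx}\in L^1$ so the first piece vanishes as $\delta\to 0^+$ by absolute continuity, and on $[\delta,1]$ integrate by parts twice (exactly as in \eqref{espressione con BT}) to obtain
\[
\begin{aligned}
\int_\delta^1 xy_x(ay_{xx})_{xx}dx&=[xy_x(ay_{xx})_x]_\delta^1-[y_xay_{xx}]_\delta^1-\frac{1}{2}[xay_{xx}^2]_\delta^1\\
&\quad +\frac{1}{2}\int_\delta^1(3a-xa')y_{xx}^2\,dx.
\end{aligned}
\]
At $x=1$, the conditions $y(t,1)=y_x(t,1)=0$ annihilate the first two boundary contributions and leave $-\frac12 a(1)y_{xx}^2(t,1)$ from the third. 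At $x=\delta$ the three boundary contributions are
\[
-\delta y_x(\delta)(ay_{xx})_x(\delta)+y_x(\delta)(ay_{xx})(\delta)+\tfrac{1}{2}\delta a(\delta)y_{xx}^2(\delta),
\]
and each tends to $0$ as $\delta\to 0^+$ by Lemma \ref{boundaryconditions_WD} in the (WD) case (using $y_x(t,0)=0$) and by Lemma \ref{boundaryconditions_SD} in the (SD) case (using $(ay_{xx})(t,0)=0$); specifically, parts (1)--(2)--(4) in (WD) and (1)--(2)--(4) in (SD) handle the three summands in order.

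It remains to pass to the limit in the distributed term $\frac12\int_\delta^1(3a-xa')y_{xx}^2\,dx$. Since $|xa'|\le Ka$ by \eqref{sup}, one has $|(3a-xa')y_{xx}^2|\le(3+K)ay_{xx}^2\in L^1(Q_T)$, so absolute continuity of the integral gives the limit $\frac12\int_{Q_T}(3a-xa')y_{xx}^2\,dx\,dt$. Summing the time and spatial contributions yields $0$, and rearranging produces \eqref{seconda uguaglianza}. The square-integrability $y_{xx}(\cdot,1)\in L^2(0,T)$ follows since all other terms are finite. The main delicacy, as in Theorem 3.2, is controlling the three boundary terms at $x=\delta$ uniformly in both degeneracy regimes, but this is already packaged in Lemmas \ref{boundaryconditions_WD} and \ref{boundaryconditions_SD}.
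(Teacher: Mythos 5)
Your proposal is correct and follows essentially the same route as the paper: multiply by $xy_x$, integrate by parts in $t$ using $y_t(t,0)=y_t(t,1)=0$, split the spatial integral at $\delta$, integrate by parts twice on $[\delta,1]$ to reach exactly the decomposition in \eqref{Natale}, kill the boundary terms at $x=\delta$ via Lemmas \ref{boundaryconditions_WD} and \ref{boundaryconditions_SD}, and pass to the limit in the distributed term using $|xa'|\le Ka$ and absolute continuity. The sign bookkeeping and the identification of which lemma parts handle which boundary summands all check out against the paper's argument.
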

	\begin{proof}
		Multiplying the equation in (\ref{(P_1)}) by $xy_x$, integrating over $Q_T$ and using the boundary conditions of $y$, we obtain
		\begin{equation}\label{uguaglianza 2}
			\begin{aligned}
				0&=\int_0^1[xy_xy_t ]^{t=T}_{t=0}dx-\frac{1}{2}\int_{Q_T}x(y^2_t)_xdx\,dt+\int_{Q_T}xy_x(ay_{xx})_{xx}dx\,dt\\
						&		= \int_0^1[xy_xy_t ]^{t=T}_{t=0}dx+\frac{1}{2}\int_{Q_T} y^2_tdx\,dt+\int_{Q_T}xy_x(ay_{xx})_{xx}dx\,dt.
			\end{aligned}
		\end{equation}
Now, let $\delta >0$ and rewrite the well defined integral $\int_{Q_T}xy_x(ay_{xx})_{xx}dxdt$ as
		\begin{equation*}
			\int_{Q_T}xy_x(ay_{xx})_{xx}dx\,dt=\int_0^T\int_0^\delta xy_x(ay_{xx})_{xx}dx\,dt+\int_0^T\int_\delta^1 xy_x(ay_{xx})_{xx}dx\,dt.
		\end{equation*}
	Due to the absolute continuity of the integral we get
		\begin{equation*}
			\lim_{\delta\to 0}\int_0^T\int_0^\delta xy_x(ay_{xx})_{xx}dx\,dt=0.
		\end{equation*}
		Moreover, it is possible to integrate by parts the second term:
		\begin{equation}\label{Natale}
			\begin{aligned}
				\int_0^T\int_\delta^1 xy_x(ay_{xx})_{xx}dx\,dt&=\int_0^T\Bigl [xy_x(ay_{xx})_{x}\Bigr ]^{x=1}_{x=\delta }dt-\int_0^T\int_\delta^1(xy_x)_x(ay_{xx})_{x}dx\,dt\\
				&=-\int_0^T\delta y_x(t,\delta)(ay_{xx})_{x}(t,\delta)dt-\int_0^T\int_\delta^1y_x(ay_{xx})_{x}dx\,dt\\
				&-\int_0^T\int_\delta^1xy_{xx}(ay_{xx})_{x}dx\,dt\\
				&=-\int_0^T\delta y_x(t,\delta)(ay_{xx})_{x}(t,\delta)dt-\int_0^T[ay_xy_{xx}]^{x=1}_{x=\delta }dt\\
				&+\int_0^T\int_\delta^1ay^2_{xx}dx\,dt-\int_0^T[axy^2_{xx}]^{x=1}_{x=\delta }dt
				+\int_0^T\int_\delta^1(xy_{xx})_{x}ay_{xx}dx\,dt\\
				&=-\int_0^T\delta y_x(t,\delta)(ay_{xx})_{x}(t,\delta)dt+\int_0^Ta(\delta)y_x(t, \delta)y_{xx}(t, \delta)dt\\
				&-\int_0^T[axy^2_{xx}]^{x=1}_{x=\delta }dt+\int_0^T\int_\delta^1ay^2_{xx}dx\,dt
				+\int_0^T\int_\delta^1ay^2_{xx}dx\,dt\\
				&+\frac{1}{2}\int_0^T\int_\delta^1xa(y^2_{xx})_xdx\,dt\\
				&=-\int_0^T\delta y_x(t,\delta)(ay_{xx})_{x}(t,\delta)dt+\int_0^Ta(\delta)y_x(t, \delta)y_{xx}(t, \delta)dt\\
				&-\frac{1}{2}\int_0^T[axy^2_{xx}]^{x=1}_{x=\delta }dt+\frac{1}{2}\int_0^T\int_\delta^1(3a-xa')y^2_{xx}dx\,dt.
			\end{aligned}
		\end{equation}
	As in the proof of Theorem \ref{Teorema 1}, one has
	\begin{equation*}
\lim_{\delta\to 0}\int_0^T\int_\delta^1(3a-xa')y^2_{xx}dx\,dt=\int_{Q_T}(3a-xa')y^2_{xx}dx\,dt
	\end{equation*}
	and, thanks to Lemma \ref{boundaryconditions_WD} or \ref{boundaryconditions_SD}, the boundary terms in \eqref{Natale} reduce to $- \ds \frac{1}{2}(ay^2_{xx})(t,1)$.
	
	Hence by the previous limit, \eqref{uguaglianza 2} and (\ref{Natale}) we obtain the thesis.
	\end{proof}
	
	Thanks to the equality stated in the previous theorem, one can prove an estimate from above on the energy. This estimate will be crucial to obtain the observability of \eqref{(P_1)}.
	\begin{Theorem}\label{Theorem OI}
			Assume $a$ (WD) or (SD). If $y$ is a mild solution of (\ref{(P_1)}), then
		\[
			a(1)\int_0^T y^2_{xx}(t,1)dt\ge \Biggl (T\min\left\{6-3K,K+2\right\}- (K+4)\max\Biggl \{
\frac{1}{a(1)(2-K)},1 \Biggr \}\Biggr )E_y(0)
		\]
		for any $T>0$.
	\end{Theorem}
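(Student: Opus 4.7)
The plan is to exploit the multiplier identity \eqref{seconda uguaglianza}, which already rewrites $\tfrac{1}{2}a(1)\int_0^T y_{xx}^2(t,1)\,dt$ as the sum of the time-boundary term $\int_0^1 [xy_ty_x]_0^T\,dx$ and the two distributed contributions $\tfrac{1}{2}\int_{Q_T} y_t^2\,dx\,dt$ and $\tfrac{1}{2}\int_{Q_T}(3a-xa')y_{xx}^2\,dx\,dt$. By the same density argument employed at the end of the companion upper estimate (and in \cite{CF_Beam}), it is enough to establish the bound for a classical solution and then pass to the limit for a mild one.

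To bound the distributed terms from below I use the degeneracy inequality $x|a'|\le Ka$, which gives $3a-xa'\ge (3-K)a$ and hence
\[
\tfrac{1}{2}\int_{Q_T} y_t^2\,dx\,dt+\tfrac{1}{2}\int_{Q_T}(3a-xa')y_{xx}^2\,dx\,dt\ge \tfrac{1}{2}\mathcal I_1+\tfrac{3-K}{2}\mathcal I_2,
\]
with $\mathcal I_1:=\int_{Q_T} y_t^2\,dx\,dt$ and $\mathcal I_2:=\int_{Q_T} ay_{xx}^2\,dx\,dt$. Theorem \ref{teorema energia costante} yields $\mathcal I_1+\mathcal I_2=2TE_y(0)$. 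To reach the $K$-dependent coefficient $\min\{6-3K,K+2\}$ I would in addition multiply the equation by $y$ and integrate on $Q_T$: using \eqref{GF0} together with the boundary conditions of \eqref{(P_1)} one gets the equipartition-type identity $\bigl[\int_0^1 yy_t\,dx\bigr]_0^T=\mathcal I_1-\mathcal I_2$, while Young's inequality, Proposition \ref{norms} and energy conservation provide the pointwise bound $|\int_0^1 yy_t\,dx|\le M E_y(0)$, where $M:=\max\{1/(a(1)(2-K)),1\}$. This gives the refined lower bound $\mathcal I_2\ge TE_y(0)-ME_y(0)$ which, combined with the trivial $\mathcal I_2\ge 0$ and inserted into the identity $\mathcal I_1+(3-K)\mathcal I_2=2TE_y(0)+(2-K)\mathcal I_2$, produces (after a weighting that depends on whether $K\le 1$ or $K>1$) the bound $\mathcal I_1+(3-K)\mathcal I_2\ge \min\{6-3K,K+2\}\,TE_y(0)-K\,ME_y(0)$.

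The time-boundary term is controlled by Young's inequality together with $x\le 1$, \eqref{stimau'} and energy conservation: at every $t$,
\[
\Bigl|\int_0^1 xy_ty_x\,dx\Bigr|\le \tfrac{1}{2}\int_0^1 y_t^2\,dx+\tfrac{1}{2}\int_0^1 y_x^2\,dx\le \tfrac{1}{2}\max\Bigl\{1,\tfrac{1}{a(1)(2-K)}\Bigr\}\cdot 2E_y(0)=M\,E_y(0),
\]
so $\bigl|\bigl[\int_0^1 xy_ty_x\,dx\bigr]_0^T\bigr|\le 2ME_y(0)$ and, after doubling \eqref{seconda uguaglianza}, this contributes $-4ME_y(0)$ on the right; added to the $-KME_y(0)$ produced in the distributed step it gives the announced constant $K+4$. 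The main obstacle I foresee is precisely this balancing: one must choose the weights in the lower bound on $\mathcal I_2$ so that the $T$- and $M$-contributions simultaneously assemble into $\min\{6-3K,K+2\}$ and $K+4$, while keeping the sign of $2-K>0$ under control so that the argument remains uniform in $K\in(0,2)$. The passage from classical to mild solutions follows the density procedure already invoked in the preceding theorems.
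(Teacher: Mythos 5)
Your proposal is correct and follows essentially the same route as the paper: both rest on the multiplier identity \eqref{seconda uguaglianza}, on the identity obtained by multiplying the equation by $y$ (the paper uses $-Ky/2$ directly), on the bound $x|a'|\le Ka$, on conservation of energy and on \eqref{stimau'}, and both arrive at exactly the same constants. The only difference is bookkeeping: the paper fixes the weight $-K/2$ on the $y$-identity once and for all and then bounds the two distributed coefficients by $\min\{6-3K,K+2\}$, whereas you redistribute between $\mathcal{I}_1$ and $\mathcal{I}_2$ via the equipartition relation with a case-dependent weight --- just note that the choice between the trivial bound $\mathcal{I}_2\ge 0$ and the refined bound $\mathcal{I}_2\ge (T-M)E_y(0)$ must be made according to the sign of $T-M$ as well, not only according to $K\lessgtr 1$, although both options are available to you and each case does close.
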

	\begin{proof}
As a first step, assume that $y$ is a classical solution of (\ref{(P_1)}). Then, multiplying the equation in (\ref{(P_1)}) by $\displaystyle\frac{-Ky}{2}$, integrating by parts over $Q_T$ and applying (\ref{GF0}), we have
		\begin{equation*}
			\begin{aligned}
				0&=-\frac{K}{2}\int_{Q_T}y_{tt}y\,dx\,dt-\frac{K}{2}\int_{Q_T}y(ay_{xx})_{xx}dx\,dt\\
				&=-\frac{K}{2}\int_0^1[yy_t ]^{t=T}_{t=0}dx+\frac{K}{2}\int_{Q_T}y^2_tdx\,dt-\frac{K}{2}\int_{Q_T}ay^2_{xx}dx\,dt.
			\end{aligned}
		\end{equation*}
		Summing the previous equality to (\ref{seconda uguaglianza}) multiplied by $2$ and using  (\ref{sup}), we have
		\begin{equation*}
			\begin{aligned}
				a(1)\int_0^Ty^2_{xx}(t,1)dt&=2\int_0^1[xy_ty_x ]^{t=T}_{t=0}dx-\frac{K}{2}\int_0^1 [yy_t ]^{t=T}_{t=0}dx+\Bigl (\frac{K}{2}+1\Bigr )\int_{Q_T}y^2_tdx\,dt\\
				&+\int_{Q_T}\Bigl (3-\frac{xa'}{a}-\frac{K}{2}\Bigr )ay^2_{xx}dx\,dt\\
				& \ge2\int_0^1[xy_ty_x ]^{t=T}_{t=0}dx-\frac{K}{2}\int_0^1 [yy_t ]^{t=T}_{t=0}dx+\Bigl (\frac{K}{2}+1\Bigr )\int_{Q_T}y^2_tdx\,dt\\
				&+\int_{Q_T}\Bigl (3-\frac{3K}{2}\Bigr )ay^2_{xx}dx\,dt\\
				& \ge2\int_0^1[xy_ty_x ]^{t=T}_{t=0}dx-\frac{K}{2}\int_0^1 [yy_t ]^{t=T}_{t=0}dx\\
				&+\min\left\{6-3K,K+2\right\}\frac{1}{2}\int_{Q_T}(y^2_t+ ay^2_{xx})dx\,dt.
			\end{aligned}
		\end{equation*}
	Now, we analyze the boundary terms that appear in the previous relation. As in \eqref{primo}, one has 
		
		\[
		\begin{aligned}
		2	\Biggl |\int_0^1\Bigl [xy_x(\tau,x)y_t(\tau,x) \Bigr ]^{\tau =T}_{\tau =0}dx\Biggr |
			&\le 4\max\Biggl \{
\frac{1}{a(1)(2-K)},1 \Biggr \}E_y(0).
		\end{aligned}
	\]
		Furthermore, by Proposition \ref{norms}, Theorem \ref{teorema energia costante}  and \eqref{stimau'},   we obtain
		\begin{equation*}
			\begin{aligned}
				\Biggl |\int_0^1y(\tau,x)y_t(\tau,x)dx \Biggr |
				&\le \frac{1}{2}\int_0^1y^2_t(\tau,x)dx+
				\frac{1}{2a(1)(2-K)}\int_0^1a(x)y_{xx}^2(\tau,x)dx\\
				&\le \max\Biggl \{1,\frac{1}{a(1)(2-K)}\Biggr \}E_y(\tau)=\max\Biggl \{1,\frac{1}{a(1)(2-K)}\Biggr \}E_y(0)
			\end{aligned}
		\end{equation*}
		for all $\tau \in [0,T]$; hence
		\[
			\begin{aligned}
				\frac{K}{2}\left|\ds \int_0^1[yy_t ]^{\tau =T}_{\tau =0}dx \right| &\le K\max\Biggl \{1,\frac{1}{a(1)(2-K)}\Biggr \}E_y(0)
			\end{aligned}
		\]
		and the thesis follows if $y$ is a classical solution. If $y$ is a mild solution we proceed as in \cite{CF_Beam}.
	\end{proof}

	\subsection{Boundary observability and null controllability}
	As in \cite{CF_Beam} we give the following definition.
	\begin{Definition}
		Problem (\ref{(P_1)}) is said to be observable in time $T>0$ via the second derivative at $x=1$ if there exists a constant $C>0$ such that for any $(y^0_T,y^1_T)\in \mathcal{H}_0$ the classical solution $y$ of (\ref{(P_1)}) satisfies
		\begin{equation}\label{3.21}
			CE_y(0)\le \int_0^Ty^2_{xx}(t,1)dt.
		\end{equation}
		Moreover, any constant satisfying (\ref{3.21}) is called observability constant for (\ref{(P_1)}) in time $T$.
	\end{Definition}
	Setting 
	\begin{equation*}
		C_T:=\sup\bigl \{C>0: \text{$C$ satisfies (\ref{3.21})}\bigr \},
	\end{equation*}
	we have that problem (\ref{(P_1)}) is observable if and only if 
	\begin{equation*}
		C_T=\inf_{(y^0_T,y^1_T)\neq (0,0)}\frac{\int_0^Ty^2_{xx}(t,1)dt}{E_y(0)}>0.
	\end{equation*}
	The inverse of $C_T$, i.e. $c_T:=\frac{1}{C_T}$, is called the {\it cost of observability} (or the {\it cost of control}) in time $T$.
	
	Thanks to Theorem \ref{Theorem OI} one can estimate the constant $C_T$, obtaining the observability of \eqref{(P_1)}.
	\begin{Corollary}\label{cor 3.12}
		Assume $a$  (WD)  or (SD). If 
		\begin{equation*}\label{ipo 3.22}
			T> \frac{K+4}{\min \{ 6-3K, K+2\}} \max\left\{1, \frac{1}{a(1)(2-K)}\right\} ,
		\end{equation*}
		then (\ref{(P_1)}) is observable in time $T$. Moreover
		\begin{equation*}
		C_T \ge T \min \{ 6-3K, K+2\} - (K+4) \max\left\{1, \frac{1}{a(1)(2-K)}\right\}.
		\end{equation*}
	\end{Corollary}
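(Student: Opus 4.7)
The plan is to derive this statement as an immediate corollary of Theorem \ref{Theorem OI}, with no additional machinery. Dividing the estimate there by $a(1)$, I would obtain, for every mild solution $y$ of (\ref{(P_1)}) with initial data $(y_T^0,y_T^1)\in\mathcal{H}_0$, the bound
$$\int_0^T y_{xx}^2(t,1)\,dt \;\ge\; \frac{1}{a(1)}\Bigl(T\min\{6-3K,\,K+2\} - (K+4)\max\bigl\{1,\tfrac{1}{a(1)(2-K)}\bigr\}\Bigr)E_y(0).$$

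Setting $m:=\min\{6-3K,K+2\}$ and $M:=(K+4)\max\{1,\tfrac{1}{a(1)(2-K)}\}$, I would then check that $m>0$ on the admissible range $K\in(0,2)$: indeed both $6-3K$ and $K+2$ are strictly positive for $K\in(0,2)$, so the quotient $M/m$ is well defined and the hypothesis on $T$ is exactly $T>M/m$, i.e., $Tm - M>0$. Consequently, the coefficient in the displayed inequality is strictly positive, which means that (\ref{(P_1)}) is observable in time $T$ in the sense of (\ref{3.21}), with an admissible observability constant equal to $(Tm-M)/a(1)$. Passing to the supremum of admissible constants in the definition of $C_T$ then yields the claimed lower bound.

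I do not foresee any real obstacle. Everything substantial has already been carried out in Theorem \ref{Theorem OI}, including the extension of the boundary estimate from classical to mild solutions; the corollary reduces to an elementary positivity check of an affine function of $T$. The only point worth flagging is the routine verification that $\min\{6-3K,K+2\}$ does not vanish on $(0,2)$, so that the threshold $M/m$ is finite and the implication $T>M/m\ \Rightarrow\ Tm-M>0$ is valid.
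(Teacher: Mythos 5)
Your approach is exactly the paper's: Corollary \ref{cor 3.12} is presented there with no separate argument beyond the remark that it follows from Theorem \ref{Theorem OI}, and your positivity check (both $6-3K$ and $K+2$ are strictly positive for $K\in(0,2)$, so $T>M/m$ is equivalent to $Tm-M>0$) together with the passage from classical to mild solutions already handled in that theorem is all that is needed to conclude observability.

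The one point where your write-up is not airtight is the final sentence. What you actually derive from Theorem \ref{Theorem OI} is that $(Tm-M)/a(1)$ is an admissible observability constant, hence $C_T \ge (Tm-M)/a(1)$. The corollary as stated claims $C_T \ge Tm-M$, i.e.\ without the factor $1/a(1)$; since the definition of $C_T$ in the paper is $C_T=\inf \int_0^T y_{xx}^2(t,1)\,dt / E_y(0)$ with no weight $a(1)$ on the left-hand side, your bound implies the stated one only when $a(1)\le 1$. So "passing to the supremum of admissible constants then yields the claimed lower bound" is a non sequitur for $a(1)>1$. This looks like a slip in the corollary's statement (the natural conclusion of Theorem \ref{Theorem OI} is $C_T\ge (Tm-M)/a(1)$, or equivalently $a(1)C_T\ge Tm-M$) rather than a flaw in your reasoning, but you should either record the bound with the $1/a(1)$ factor or explicitly flag the discrepancy instead of asserting that the literal inequality in the corollary follows.
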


	Now, we give the definition of solution by transposition for the problem \eqref{(P)}.
	
	\begin{Definition}
		Let $f\in L^2_{loc}[0,+\infty)$ and $(u_0,u_1)\in L^2(0, 1)\times \left(H^{2}_{a,0}(0,1)\right)^*$. We say that $u$ is a solution by transposition of (\ref{(P)}) if
		\begin{equation*}
			u\in \mathcal{C}^1\left([0,+\infty);\left(H^{2}_{a,0}(0,1)\right)^*\right)\cap \mathcal{C}([0,+\infty);L^2(0,1))
		\end{equation*}
		and for all $T>0$
		\[
			\begin{aligned}
				\left\langle u_t(T),v^0_T\right\rangle_{\left(H^{2}_{a,0}(0,1)\right)^*,H^{2}_{a,0}(0, 1)}&-\int_0^1u(T,x)v^1_T\,dx=\left\langle u_1,v(0)\right\rangle_{\left(H^{2}_{a,0}(0,1)\right)^*,H^{2}_{a,0}(0, 1)}\\
				&-\int_0^1u_0(x)v_t(0,x)\,dx-\int_0^Ta(1)v_{xx}(t,1)f(t)dt
			\end{aligned}
		\]
for all $(v^0_T,v^1_T)\in \mathcal H_0$, where $v$ solves the backward problem
		\begin{equation}\label{backward problem}
			\begin{cases}
				v_{tt}(t,x)+(av_{xx})_{xx}(t,x)=0, &(t,x)\in (0,+\infty) \times (0,1),\\
				v(t,0)=0,\,\,v(t,1)=0,&t>0,\\
				\begin{cases}
					v_x(t,0)=0, &\text{ if } a \text{ is (WD)},\\
					(av_{xx})(t,0)=0, &\text{ if } a \text{ is (SD)},\\ 
				\end{cases}&t>0,\\
			 v_x(t,1)=0, &t>0,\\
				v(T,x)=v^0_T(x),\,\,v_t(T,x)=v^1_T(x),&x\in(0,1).
			\end{cases}
		\end{equation}
	\end{Definition}
	
	In order to study the null controllability for \eqref{(P)}, we consider the bilinear form $\Lambda:\mathcal{H}_0\times\mathcal{H}_0\to\mathbb{R}$ defined as
	\begin{equation*}
		\Lambda(V_T,W_T):=\int_0^Ta(1)v_{xx}(t,1)w_{xx}(t,1)dt,
	\end{equation*}
	where $v$ and $w$ are the solutions of (\ref{backward problem}) associated to the data $V_T:= (v^0_T,v^1_T)$ and $W_T:= (w^0_T,w^1_T)$, respectively. As in \cite{CF_Beam}, assuming $a$ (WD)  or (SD), one has that $\Lambda$ is continuous and coercive and, using these properties, one can show the null controllability for  (\ref{(P)}). Indeed, setting $T_0$ the lower bound found in Corollary \ref{cor 3.12}, i.e. 
	\[
	T_0:= \frac{K+4}{\min \{ 6-3K, K+2\}}  \max\left\{1, \frac{1}{a(1)(2-K)}\right\},
	\]
	one has the next result.
	\begin{Theorem}
	Assume $a$ (WD)  or (SD). Then, for all $T>T_0$ and for every $(u_0,u_1)\in L^2(0, 1)\times \left(H^{2}_{a,0}(0,1)\right)^*$, there exists a control $f\in L^2(0,T)$ such that the solution of (\ref{(P)}) satisfies
		\[
			u(T,x)=u_t(T,x)=0\,\,\,\,\,\,\forall\; x\in (0,1).
		\]
	\end{Theorem}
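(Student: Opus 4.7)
The plan is to apply the Hilbert Uniqueness Method (HUM). Since the paper has already indicated that the bilinear form $\Lambda$ is continuous and coercive on $\mathcal{H}_0\times\mathcal{H}_0$ for $T>T_0$ (continuity comes from the upper bound on the energy established before Theorem \ref{Theorem OI}, coercivity from the observability inequality in Corollary \ref{cor 3.12}), the first step is to fix $(u_0,u_1)\in L^2(0,1)\times (H^2_{a,0}(0,1))^*$ and define the linear functional $L:\mathcal{H}_0\to\mathbb{R}$ by
\[
L(W_T):=\langle u_1,w(0)\rangle_{(H^2_{a,0}(0,1))^*,H^2_{a,0}(0,1)}-\int_0^1 u_0(x)w_t(0,x)\,dx,
\]
where $w$ is the solution of the backward problem \eqref{backward problem} with data $W_T=(w^0_T,w^1_T)$. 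The continuity of $L$ follows from Theorem \ref{Theorem 2.6} (with Remark \ref{remark revers} on time-reversibility), which guarantees that $W_T\mapsto (w(0),w_t(0))$ is continuous from $\mathcal{H}_0$ into itself, so that $|L(W_T)|\le (\|u_0\|_{L^2(0,1)}+\|u_1\|_{(H^2_{a,0}(0,1))^*})\,C\|W_T\|_{\mathcal{H}_0}$.

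Next, I apply the Lax--Milgram theorem to obtain a unique $\widetilde V_T\in\mathcal{H}_0$ with $\Lambda(\widetilde V_T,W_T)=L(W_T)$ for all $W_T\in\mathcal{H}_0$. Let $\widetilde v$ be the solution of \eqref{backward problem} with data $\widetilde V_T$, and define the candidate control by
\[
f(t):=\widetilde v_{xx}(t,1),\qquad t\in(0,T).
\]
The upper energy estimate (the theorem right after Theorem \ref{Teorema 1}) applied to $\widetilde v$ ensures $f\in L^2(0,T)$, and the variational equality rewrites as
\[
\int_0^T a(1)\widetilde v_{xx}(t,1)w_{xx}(t,1)\,dt = \langle u_1,w(0)\rangle_{(H^2_{a,0}(0,1))^*,H^2_{a,0}(0,1)}-\int_0^1 u_0(x)w_t(0,x)\,dx
\]
for every $w$ arising from $W_T\in\mathcal{H}_0$.

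Finally, let $u$ be the solution by transposition of \eqref{(P)} with initial data $(u_0,u_1)$ and control $f$; its existence and uniqueness in the required class follow from well-posedness of the backward problem and the estimate on $f$. Writing the transposition identity for arbitrary $(v^0_T,v^1_T)\in\mathcal{H}_0$ and comparing with the Lax--Milgram identity above, I obtain
\[
\langle u_t(T),v^0_T\rangle_{(H^2_{a,0}(0,1))^*,H^2_{a,0}(0,1)}-\int_0^1 u(T,x)v^1_T\,dx = 0
\qquad\forall\,(v^0_T,v^1_T)\in\mathcal{H}_0.
\]
Choosing $(v^0_T,v^1_T)$ to range over a dense subset (e.g. $D(A)\times H^2_{a,0}(0,1)$) one concludes $u(T,\cdot)=u_t(T,\cdot)=0$.

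The main obstacle I expect is not the algebraic manipulation but the justification of the continuity of $L$ in the dual norm on $(H^2_{a,0}(0,1))^*$: the duality pairing has to be matched carefully with the regularity of $w(0)$ coming out of \eqref{backward problem}, and one has to rely on the time-reversibility together with Theorem \ref{Theorem 2.6} to ensure that the map $W_T\mapsto w(0)$ takes values in $H^2_{a,0}(0,1)$ with the proper continuity. Everything else (application of Lax--Milgram, definition of $f$, passage through the transposition formula) is standard once continuity and coercivity of $\Lambda$, already recorded by the authors, are in place.
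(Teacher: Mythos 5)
Your proposal is correct and follows exactly the route the paper intends: the paper omits the proof, referring to \cite[Theorem 4.1]{CF_Beam}, but the surrounding text makes clear that the argument is precisely the HUM scheme you carry out, namely Lax--Milgram applied to the continuous and coercive form $\Lambda$ (coercivity from Corollary \ref{cor 3.12}, continuity from the upper energy bound), the control $f(t)=\widetilde v_{xx}(t,1)$, and cancellation in the transposition identity. No discrepancies to report.
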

	We omit the proof since it can be easily obtained by the one of \cite[Theorem 4.1]{CF_Beam} with suitable changes.

	\section{The linear stabilization}\label{section 4}
	In this section we consider the second problem of the paper, i.e. the linear stabilization of the following degenerate beam equation
\begin{equation}\label{(P_feed)}
	\begin{cases}
		y_{tt}(t,x)+(ay_{xx})_{xx}(t,x)=0, &(t,x)\in Q_T,\\
			y(t,0)=0,&t\in (0,T),\\
				\begin{cases}
			y_x(t,0)=0, &\text{ if } a \text{ is (WD)},\\
		(ay_{xx})(t,0)=0, &\text{ if } a \text{ is (SD)},\\ 
			\end{cases}, &t\in (0, T),\\
		\beta y(t,1)-(ay_{xx})_x(t,1)+y_t(t,1)=0, &t \in (0,T),\\
		\gamma y_x(t,1)+(ay_{xx})(t,1)+y_{tx}(t,1)=0, &t \in (0,T),\\
		y(0,x)=y_0(x),\,\,y_t(0,x)=y_1(x),&x\in(0,1),
	\end{cases}
\end{equation}
where, as usual, $Q_T:=(0,T) \times (0,1)$, $T>0$ and $\beta,\gamma \ge0$.

\subsection{The well posedness}

In order to study the well posedness of (\ref{(P_feed)}), we introduce the following Hilbert spaces: if the function \underline{$a$ is (WD)},  we consider
	\[
		\begin{aligned}
			K^2_a(0,1):&=\{u \in V^2_a(0,1): u(0)=0\}\\
			&=\{u\in H^1(0,1): u' \text{ is absolutely continuous in [0,1]}, u(0)=0,\\
			& \quad \;\; \sqrt{a}u''\in L^2(0,1)\};
		\end{aligned}
	\]
on the other hand, if \underline{$a$ is (SD)}, then 
	\[
		\begin{aligned}
			K^2_a(0,1):&=\{u \in V^2_a(0,1): u(0)=0\}\\
			&=\{u\in H^1(0,1): u' \text{ is locally absolutely continuous in } (0,1], u(0)=0,\\
			& \quad \; \;\sqrt{a}u''\in L^2(0,1)\}.
	\end{aligned}\]
	On $K^2_a(0,1)$ we consider the norms
	$
		\|\cdot\|_{2,a}$ or
		$
		\|\cdot\|_{2}$ defined in \eqref{normadiv} and in \eqref{normadiv1}, respectively, and the following one
\[
\|u\|_{2, \circ}^2 :=  |u'(1)|^2+\|\sqrt{a}u''\|^2_{L^2(0,1)}
\]
for all $u \in K^2_{a}(0, 1)$.
	\begin{Proposition}\label{normeequivalenti} Assume $a$ (WD) or (SD). Then the three norms are equivalent in $K^2_{a}(0, 1)$. In particular,  for all $u\in K^2_a(0, 1)$
			\begin{equation}\label{stimaL2}
			\|u\|^2_{L^2(0,1)}\le\|u'\|^2_{L^2(0,1)}\le 2\left(|u'(1)|^2+\frac{\|\sqrt{a}u''\|^2_{L^2(0,1)}}{a(1)(2-K)}	\right)	\le 2\max\left\{1, \frac{1}{a(1)(2-K)}\right\}	\|u\|_{2, \circ}^2, \end{equation}
			\[
	\|u\|^2_2 \le \max\left\{2, 1+ \frac{2}{a(1)(2-K) }\right\}\|u\|^2_{2,\circ}		
			\]
			and
			\[
			\|u\|^2_{2,a} \le \max\left\{4, \frac{4}{a(1)(2-K)}+ 1\right\} \|u\|_{2, \circ}^2
			\]
	\end{Proposition}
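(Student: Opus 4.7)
The proof is a direct extension of that of Proposition \ref{norms}: once one has a sharp bound for $\|u'\|_{L^2}^2$ in terms of $|u'(1)|^2$ and $\|\sqrt{a}u''\|_{L^2}^2$, every stated inequality follows by elementary combinations. The only new ingredient compared with Proposition \ref{norms} is that now $u'(1)$ need not vanish, so the endpoint $x=1$ contributes an extra boundary term that gets absorbed by $\|u\|_{2,\circ}$.

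First I would reproduce the opening step of Proposition \ref{norms}: since $u(0)=0$, for every $x\in(0,1]$
\[
|u(x)|=\Bigl|\int_0^x u'(s)\,ds\Bigr|\le \sqrt{x}\,\|u'\|_{L^2(0,1)},
\]
so $\|u\|_{L^2(0,1)}^2\le \|u'\|_{L^2(0,1)}^2$. Next, instead of starting the fundamental-theorem-of-calculus argument from $u'(1)=0$, I would write
\[
u'(x)=u'(1)-\int_x^1 \frac{\sqrt{a(t)}\,u''(t)}{\sqrt{a(t)}}\,dt,
\]
apply Cauchy--Schwarz to the integral, and use $(p+q)^2\le 2p^2+2q^2$ to get
\[
|u'(x)|^2\le 2|u'(1)|^2+2\|\sqrt{a}u''\|_{L^2(0,1)}^2\int_x^1\frac{1}{a(t)}\,dt.
\]
Integrating over $x\in(0,1)$, swapping the order as in \eqref{virizion}, and invoking the bound $\int_0^1 t/a(t)\,dt\le 1/(a(1)(2-K))$ from \eqref{2.12'} yields
\[
\|u'\|_{L^2(0,1)}^2\le 2|u'(1)|^2+\frac{2}{a(1)(2-K)}\|\sqrt{a}u''\|_{L^2(0,1)}^2,
\]
which is the key estimate. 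Combined with $\|u\|_{L^2}^2\le\|u'\|_{L^2}^2$, this immediately gives the entire chain \eqref{stimaL2} after factoring out $\max\{1,1/(a(1)(2-K))\}$.

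Finally I would assemble the other two estimates. Adding $\|\sqrt{a}u''\|_{L^2}^2$ to both sides yields
\[
\|u\|_2^2\le 2|u'(1)|^2+\Bigl(1+\tfrac{2}{a(1)(2-K)}\Bigr)\|\sqrt{a}u''\|_{L^2}^2\le \max\Bigl\{2,1+\tfrac{2}{a(1)(2-K)}\Bigr\}\|u\|_{2,\circ}^2,
\]
and bounding $\|u\|_{2,a}^2\le 2\|u'\|_{L^2}^2+\|\sqrt{a}u''\|_{L^2}^2$ before applying the key estimate gives the last inequality, with constants matching exactly those in the statement. The trivial bounds $\|u\|_2\le \|u\|_{2,a}$ and $\|\sqrt{a}u''\|_{L^2}^2\le\|u\|_2^2\le\|u\|_{2,a}^2$ together with the displayed inequalities yield the equivalence of the three norms; the reverse direction $\|u\|_{2,\circ}\le C\|u\|_{2,a}$ follows from a standard $H^1\hookrightarrow C^0$ trace on $[1/2,1]$, where $a$ is bounded below away from zero, so $|u'(1)|^2\le C(\|u'\|_{L^2}^2+\|\sqrt{a}u''\|_{L^2}^2)$. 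I do not expect any real obstacle: the only delicate point is treating the nonzero boundary term $u'(1)$, and this is handled cleanly by the $(p+q)^2\le 2p^2+2q^2$ split above.
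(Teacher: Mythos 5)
Your proposal is correct and follows essentially the same route as the paper: the same $(p+q)^2\le 2p^2+2q^2$ split of $u'(x)=u'(1)-\int_x^1 u''$, the same Fubini step reducing to $\int_0^1 t/a(t)\,dt\le 1/(a(1)(2-K))$, and the same assembly of the three displayed inequalities with matching constants. The only cosmetic difference is the reverse bound $\|u\|_{2,\circ}\le C\|u\|_{2,a}$, which the paper obtains explicitly via $|u'(1)|^2\le 2\int_0^1|u'(1)-u'(x)|^2dx+2\|u'\|^2_{L^2(0,1)}$ (reusing the same Cauchy--Schwarz computation) rather than your softer trace argument on $[1/2,1]$; both work.
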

	\begin{proof} 
	Fix $u\in \mathcal \mathcal K^2_a(0,1)$ and take $x \in (0,1]$. 
		Since $u(0)=0$, we have that \eqref{star} and \eqref{star1} still hold.
Moreover,
		\begin{equation*}
			|u'(1)- u'(x)|=\left|\int_x^1\frac{\sqrt{a(t)}u''(t)}{\sqrt{a(t)}}dt\right| \le 	\|\sqrt{a}u''\|_{L^2(0,1)}\Biggl (\int_x^1\frac{1}{a(t)}dt\Biggr )^{\frac{1}{2}},
		\end{equation*}
		for all $x \in (0,1)$.
		Consequently, proceeding as in \eqref{virizion} and \eqref{2.12'},
		\begin{equation}\label{virizion2}
			\begin{aligned}
				\int_0^1|u'(x)-u'(1)|^2dx&\le \|\sqrt{a}u''\|^2_{L^2(0,1)}\int_0^1dx\int_x^1\frac{1}{a(t)}dt\\
				&\le \frac{\|\sqrt{a}u''\|^2_{L^2(0,1)}}{a(1)(2-K)}.
			\end{aligned}
		\end{equation}
		Now, we estimate  $|u'(1)|^2$. 
By \eqref{virizion2}
\begin{equation}\label{stimau'(1)}
\begin{aligned}
|u'(1)|^2&= \left|\int_0^1 (u'(1))^2dx \right| \le 2\int_0^1 (u'(1)-u'(x))^2 dx + 2 \int_0^1 |u'(x)|^2dx\\
& \le2\left( \frac{\|\sqrt{a}u''\|^2_{L^2(0,1)}}{a(1)(2-K)}+ \|u'\|^2_{L^2(0,1)}\right).
\end{aligned}
\end{equation}
Hence, for all $u \in K^2_a(0,1)$, it results
\[
\begin{aligned}
\|u\|_{2, \circ}^2 &=  |u'(1)|^2+\|\sqrt{a}u''\|^2_{L^2(0,1)} \le \left(\frac{2}{a(1)(2-K)}+ 1\right)\|\sqrt{a}u''\|^2_{L^2(0,1)}+ 2 \|u'\|^2_{L^2(0,1)} \\
&\le \max\left\{ \frac{2}{a(1)(2-K)}+ 1, 2\right\}\|u\|_{2,a}^2.
\end{aligned}
\]
Furthermore, by \eqref{star1} and \eqref{virizion2}
	\[
	\begin{aligned}
\|u\|_{L^2(0,1)}^2&\le	\|u'\|_{L^2(0,1)}^2=	\int_0^1|u'(x)|^2dx \le\int
_0^1(|u'(x)-u'(1)|+|u'(1)|)^2 dx\\
&
\le 2|u'(1)|^2+2\int_0^1|u'(x)-u'(1)|^2dx\\
		&\le 2|u'(1)|^2+2\frac{\|\sqrt{a}u''\|^2_{L^2(0,1)}}{a(1)(2-K)}.
	\end{aligned}
\]
Thus, \eqref{stimaL2} holds and  for all $u \in K^2_a(0,1)$, it results
\[
\begin{aligned}
\|u\|_2^2 &=	\|u\|^2_{L^2(0,1)}+ \|\sqrt{a}u''\|^2_{L^2(0,1)}\le 2|u'(1)|^2 + \left(\frac{2}{a(1)(2-K)}+1\right) \|\sqrt{a}u''\|^2_{L^2(0,1)} \\
& \le \max\left\{2, \frac{2}{a(1)(2-K)}+ 1\right\} \|u\|_{2, \circ}^2.
\end{aligned}
\]
Hence
\[
\begin{aligned}
\|u\|^2_{2,a}&= \|u\|_2^2+ \|u'\|^2_{L^2(0,1)}\le 
4|u'(1)|^2+\left(\frac{4}{a(1)(2-K)}+1 \right)
 \|\sqrt{a}u''\|^2_{L^2(0,1)}\\
 &\le
 \max\left\{4, \frac{4}{a(1)(2-K)}+ 1\right\} \|u\|_{2, \circ}^2.
\end{aligned}
\]
	\end{proof}

To complete the description of the functional setting needed to treat problem (\ref{(P_feed)}), we consider the space
\[
\mathcal W(0,1):=\{u\in K^2_a(0,1): au''\in H^2(0,1)\},
\]
where \eqref{GG0} becomes
\begin{equation}\label{GGnew}
\int_0^1 (au'')''v\,dx = [(au'')'v](1)- [au''v']_{x=0}^{x=1} + \int_0^1 au''v''dx
\end{equation}
for all   $(u,v) \in \mathcal W(0,1) \times K^2_a(0,1)$.
Now, define
\[
K_{a,0}^2(0,1):=\{ u \in K^2_a(0,1): u'(0)=0, \text{ when $a$ is (WD)}\}
\]
and
 \[
		 \begin{aligned}
		\mathcal W_0 (0,1):=\{ u \in \mathcal W (0,1): &\; u'(0)=0 \text{ if $a$ is (WD), or}\\&
 \;(au'')(0)=0, \text{  if $a$ is (SD)}\}. 
		 \end{aligned}\]
	Observe that, if $a$ is (SD), then $K^2_{a,0}(0,1)\equiv K^2_a(0,1)$.
		Clearly,   for all $(u,v)\in \mathcal W_0 (0,1) \times K^2_{a,0}(0,1)$ \eqref{GGnew} becomes
	\begin{equation}\label{GF0new}
			\int_{0}^{1}(au'')''v\,dx=[(au'')'v](1) - [au''v'](1)+\int_{0}^{1}au''v''dx.
		\end{equation}
		Indeed,  it is sufficient to prove that, if $a$ is (SD), then there exists $\lim_{\delta \rightarrow 0^+}(au''v')(\delta)=0$. To this aim, one can write
		\[
		\begin{aligned}
		(au''v')(\delta)&=(au''v')(1)-\int_\delta^1 (au''v')'(x)dx \\
		&=(au''v')(1) -\int_\delta^1 (au'')'(x)v'(x)dx - \int_\delta^1 (au''v'')(x)dx,
	\end{aligned}	\]
		where $\delta \in (0,1)$ is fixed, and, proceeding as in Subsection \ref{section 3}.1, one has the thesis.

The last space needed to study the well posedness of \eqref{(P_feed)} is  the following one:
	\begin{equation*}
		\mathcal{K}_0:=K^2_{a,0}(0,1)\times L^2(0,1),
	\end{equation*}
	endowed with inner product and norm given by
	\[
		\langle (u,v),(\tilde{u},\tilde{v})\rangle_{\mathcal{K}_0}:=\langle (u,v),(\tilde{u},\tilde{v})\rangle_{\mathcal{H}_0}+\beta u(1)\tilde{u}(1)+\gamma u'(1)\tilde{u}'(1)
\]
 and 
\[
		\|(u,v)\|^2_{\mathcal{K}_0}:=	\|(u,v)\|^2_{\mathcal{H}_0}+\beta u^2(1)+\gamma (u')^2(1)
	\]
		for every $(u,v), (\tilde{u},\tilde{v})\in\mathcal{K}_0$.
		
		 Now, consider the operator $(B, D(B))$ defined as $By:= (ay_{xx})_{xx}$ for all 
		$
		y\in   D(B):=\mathcal W_0(0,1)$ and
define $\mathcal{B}:D(\mathcal{B})\subset\mathcal{K}_0\to \mathcal{K}_0$ as
	\[	\mathcal{B}:=\begin{pmatrix}
		0 & Id \\
		-B & 0
	\end{pmatrix},\]
	\[
	\begin{aligned}
	D(\mathcal{B}):=\{(u,v) \in D(B)\times  K^2_{a,0}(0,1): &\; \beta u(1)-(au'')'(1)+v(1)=0,\\ &\; \gamma u'(1)+(au'')(1)+v'(1)=0\}.
	\end{aligned}
	\]
\begin{Theorem}\label{th generazionenew}
	Assume $a$ (WD) or (SD). Then the operator $(\mathcal{B}, D(\mathcal{B}))$ is non positive with dense domain and generates a contraction semigroup $(R(t))_{t\ge 0}$.
\end{Theorem}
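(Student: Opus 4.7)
My strategy is to apply the Lumer--Phillips generation theorem to $(\mathcal{B},D(\mathcal{B}))$ on $\mathcal{K}_0$: it suffices to show that $D(\mathcal{B})$ is dense in $\mathcal{K}_0$, that $\mathcal{B}$ is dissipative, and that $I-\mathcal{B}$ is surjective onto $\mathcal{K}_0$. Density is obtained by the same approximation scheme as in Theorem \ref{th generazione}: given $(\phi,\psi)\in\mathcal{K}_0$, I first approximate $\phi$ in $K^2_{a,0}(0,1)$ by functions in $D(B)$ and then perturb $\psi$ in $L^2(0,1)$ near $x=1$ so that the two compatibility conditions at the boundary are met.

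\textbf{Dissipativity.} For $(u,v)\in D(\mathcal{B})$ I compute
\[
\langle \mathcal{B}(u,v),(u,v)\rangle_{\mathcal{K}_0}
=\int_0^1 au''v''\,dx-\int_0^1 (au'')''v\,dx+\beta u(1)v(1)+\gamma u'(1)v'(1).
\]
Rewriting the second integral via \eqref{GF0new} and substituting the two defining boundary conditions of $D(\mathcal{B})$, namely $(au'')'(1)=\beta u(1)+v(1)$ and $(au'')(1)=-\gamma u'(1)-v'(1)$, the $\beta$ and $\gamma$ contributions cancel and I am left with
\[
\langle \mathcal{B}(u,v),(u,v)\rangle_{\mathcal{K}_0}=-v(1)^2-v'(1)^2\le 0.
\]

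\textbf{Surjectivity of $I-\mathcal{B}$.} Given $(f,g)\in\mathcal{K}_0$ I look for $(u,v)\in D(\mathcal{B})$ with $u-v=f$ and $v+(au'')''=g$. Setting $v:=u-f\in K^2_{a,0}(0,1)$, the problem reduces to solving $u+(au'')''=g+f$ together with the boundary conditions at $x=0$ inherited from $D(B)$ and the modified conditions at $x=1$
\[
(au'')'(1)=(\beta+1)u(1)-f(1),\qquad (au'')(1)=-(\gamma+1)u'(1)+f'(1).
\]
I pose the associated variational problem in $K^2_{a,0}(0,1)$ through the bilinear form
\[
a_1(u,w):=\int_0^1 uw\,dx+\int_0^1 au''w''\,dx+(\beta+1)u(1)w(1)+(\gamma+1)u'(1)w'(1)
\]
and the functional $L(w):=\int_0^1(g+f)w\,dx+f(1)w(1)+f'(1)w'(1)$. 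Boundedness is immediate from the embedding $K^2_{a,0}(0,1)\hookrightarrow H^1(0,1)\hookrightarrow\mathcal{C}([0,1])$ (note that $f'(1)$ is well defined since $f\in K^2_{a,0}(0,1)$), while coercivity follows from $a_1(u,u)\ge u'(1)^2+\|\sqrt a\,u''\|_{L^2(0,1)}^2=\|u\|_{2,\circ}^2$ together with the norm equivalence of Proposition \ref{normeequivalenti}. Lax--Milgram then supplies a unique $u\in K^2_{a,0}(0,1)$ with $a_1(u,w)=L(w)$ for every $w$.

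\textbf{Main obstacle.} The technical core is to show that $(u,u-f)\in D(\mathcal{B})$. Testing first with $w\in C_c^\infty(0,1)$ yields $(au'')''=g+f-u\in L^2(0,1)$ distributionally, hence $au''\in H^2(0,1)$. Testing next with general $w\in K^2_{a,0}(0,1)$ and comparing with \eqref{GF0new} reproduces the two modified boundary conditions at $x=1$. The delicate point is the endpoint $x=0$: in the (SD) case $K^2_{a,0}(0,1)$ does \emph{not} encode $(au'')(0)=0$, so this missing condition has to be extracted from the variational identity by using test functions that do not vanish at $x=0$ and by controlling the arising boundary contributions with the vanishing limits of Lemmas \ref{boundaryconditions_WD} and \ref{boundaryconditions_SD}; once this is done, Lumer--Phillips concludes.
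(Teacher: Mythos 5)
Your proposal follows essentially the same route as the paper: Lumer--Phillips, dissipativity reduced via \eqref{GF0new} and the conditions in $D(\mathcal B)$ to $-v(1)^2-v'(1)^2\le 0$, and surjectivity of $\mathcal I-\mathcal B$ via Lax--Milgram with exactly the paper's bilinear form $\int_0^1 uz\,dx+\int_0^1 au''z''\,dx+(\beta+1)u(1)z(1)+(\gamma+1)u'(1)z'(1)$ and the functional carrying the $f(1)z(1)+f'(1)z'(1)$ corrections, followed by recovery of the boundary conditions (including $(au'')(0)=0$ in the (SD) case) from the variational identity. The only cosmetic differences are that the paper extracts the vanishing of $(au''z')(0)$ directly from that identity (rather than through Lemmas \ref{boundaryconditions_WD}--\ref{boundaryconditions_SD}, and with the estimate \eqref{stimau'(1)} rather than the $H^1\hookrightarrow \mathcal C[0,1]$ embedding to control the $u'(1)z'(1)$ term), and that it dispenses with your explicit density construction, density being automatic for a dissipative operator with $\operatorname{ran}(\mathcal I-\mathcal B)=\mathcal K_0$.
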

We postpone the proof of the previous result to the Appendix. Moreover, as in the Subsection \ref{subsect2}, one has the following existence theorem.
	\begin{Theorem}\label{Theorem 2.6new}
		Assume $a$ (WD) or (SD).
		If $(y^0_T,y^1_T)\in\mathcal{K}_0$, then there exists a unique mild solution
		\begin{equation*}
			y\in \mathcal{C}^1([0,+\infty);L^2(0,1))\cap \mathcal{C}([0,+\infty);K^2_{a,0}(0,1))
		\end{equation*}
		of (\ref{(P_feed)}) which depends continuously on the initial data $(y^0_T,y^1_T)\in \mathcal{K}_0$. Moreover, if $(y^0_T,y^1_T)\in D(\mathcal{B})$, then the solution $y$ is classical, in the sense that
		\begin{equation*}
			y\in \mathcal{C}^2([0,+\infty);L^2(0,1))\cap \mathcal{C}^1([0,+\infty);K^2_{a,0}(0,1))\cap \mathcal{C}([0,+\infty); D(B))
		\end{equation*}
		and the equation of (\ref{(P_feed)}) holds for all $t\ge 0$.
	\end{Theorem}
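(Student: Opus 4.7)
The plan is to derive this existence and regularity theorem as a direct consequence of the generation result in Theorem \ref{th generazionenew} together with standard semigroup theory, exactly parallel to the treatment of problem (\ref{(P_1)}) in Subsection \ref{subsect2}. First I would reformulate (\ref{(P_feed)}) as the abstract Cauchy problem
\begin{equation*}
\begin{cases}
\dot{\mathcal U}(t)=\mathcal B\,\mathcal U(t), & t\ge 0,\\
\mathcal U(0)=\mathcal U_0,
\end{cases}
\end{equation*}
in the Hilbert space $\mathcal K_0$, where $\mathcal U(t)=(y(t),y_t(t))^T$ and $\mathcal U_0=(y_0,y_1)^T$. The dissipative boundary couplings at $x=1$ are encoded directly in the definition of $D(\mathcal B)$, so that a $\mathcal U$ belonging to $D(\mathcal B)$ is exactly a pair satisfying both the degenerate boundary conditions at $x=0$ and the feedback conditions at $x=1$ appearing in (\ref{(P_feed)}).

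Having done this, I would invoke Theorem \ref{th generazionenew}: since $(\mathcal B,D(\mathcal B))$ is densely defined, dissipative and generates a contraction semigroup $(R(t))_{t\ge 0}$ on $\mathcal K_0$, the mild solution of the Cauchy problem is simply $\mathcal U(t)=R(t)\mathcal U_0$. For $\mathcal U_0\in\mathcal K_0$, the strong continuity of $(R(t))_{t\ge 0}$ yields $\mathcal U\in\mathcal C([0,+\infty);\mathcal K_0)$; reading off the two components, together with the embedding $K^2_{a,0}(0,1)\hookrightarrow L^2(0,1)$, gives the regularity $y\in\mathcal C^1([0,+\infty);L^2(0,1))\cap\mathcal C([0,+\infty);K^2_{a,0}(0,1))$. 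The contraction property $\|R(t)\|_{\mathcal L(\mathcal K_0)}\le 1$ immediately implies the continuous dependence of the solution on the initial data.

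For the classical part, if $\mathcal U_0\in D(\mathcal B)$, then by the standard theory of strongly continuous semigroups (see, e.g., \cite[Proposition 3.15]{daprato}, applied here in the same manner as in Theorem \ref{Theorem 2.6}) one has
\begin{equation*}
\mathcal U\in\mathcal C^1([0,+\infty);\mathcal K_0)\cap\mathcal C([0,+\infty);D(\mathcal B)),
\end{equation*}
where $D(\mathcal B)$ is endowed with the graph norm. Unpacking the definitions of $\mathcal B$ and $D(\mathcal B)$, this translates componentwise into $y\in\mathcal C^2([0,+\infty);L^2(0,1))\cap\mathcal C^1([0,+\infty);K^2_{a,0}(0,1))\cap\mathcal C([0,+\infty);D(B))$, and the identity $\dot{\mathcal U}(t)=\mathcal B\mathcal U(t)$ in $\mathcal K_0$ becomes the pointwise equation $y_{tt}+(ay_{xx})_{xx}=0$ together with the boundary conditions of (\ref{(P_feed)}) for every $t\ge 0$.

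Since all analytic effort — in particular the verification of dissipativity (which uses the Gauss--Green formula (\ref{GF0new}) and the boundary feedback to produce the correct nonpositive boundary contributions), density of $D(\mathcal B)$, and the range condition for some $\lambda>0$ — is already carried out in the proof of Theorem \ref{th generazionenew} in the Appendix, the only step that requires care here is the translation between the abstract formulation in $\mathcal K_0$ and the PDE formulation, and this is purely notational. The main obstacle, therefore, is not in the present theorem at all but in the postponed generation result; here the argument is essentially a transcription.
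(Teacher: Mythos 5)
Your proposal is correct and follows essentially the same route as the paper, which proves Theorem \ref{Theorem 2.6new} exactly by rewriting (\ref{(P_feed)}) as the abstract Cauchy problem for $(\mathcal B,D(\mathcal B))$ on $\mathcal K_0$, invoking the generation result of Theorem \ref{th generazionenew}, and applying standard semigroup theory as in \cite[Proposition 3.15]{daprato}, just as was done for Theorem \ref{Theorem 2.6}. No discrepancies to report.
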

	
	Remark \ref{remark revers} still holds in this case.

	In the last result of this section we study the degenerate elliptic equation associated to \eqref{(P_feed)}, whose analysis is crucial to obtain the stabilization of problem (\ref{(P_feed)}). In particular, we prove that the corresponding elliptic problem admits a unique solution satisfying suitable estimates.
	\begin{Proposition}\label{prob variazionale}
		Assume $a$ (WD) or (SD) and consider $\beta\ge0$ and $\gamma >0$. Define
		\begin{equation*}
			|||z|||^2:=\int_0^1a(z'')^2dx+\beta z^2(1)+\gamma (z'(1))^2
		\end{equation*}
		for all $z\in K^2_{a,0}(0,1)$. Then the norms $|||\cdot |||$, $\|\cdot\|_{2,a}$, $\|\cdot\|_2$ and $||\cdot ||_{2, \circ}$ are equivalent in $K^2_{a,0}(0,1)$. Moreover, for every $\lambda,\mu \in\mathbb{R}$, the variational problem
		\begin{equation*}
			\int_0^1az''\varphi''dx+\beta z(1)\varphi(1)+\gamma z'(1)\varphi'(1)=\lambda\varphi(1)+\mu\varphi'(1) \quad \forall \; \varphi \in K^2_{a,0}(0,1)
		\end{equation*}
		admits a unique solution $z\in K^2_{a,0}(0,1)$ which satisfies the estimates
		\begin{equation}\label{eroeferreo}
			\norm{z}^2_{L^2(0, 1)}\le C_1C_2C_{a,K, \lambda, \mu}^2\,\,\,\,\,\,\text{ and }\,\,\,\,\,\,	|||z|||^2\le C^2_{a,K, \lambda, \mu},
		\end{equation}
		where 
		\begin{equation}\label{C1}
C_1:=	 2\max\left\{1, \frac{1}{a(1)(2-K)}\right\}, \quad C_2 := \max\left\{1, \frac{1}{\gamma}\right\}\end{equation}and \[C_{a,K, \lambda, \mu}:= \sqrt{C_2}\left(|\lambda|\sqrt{C_1} +|\mu|\right).
\]
		In addition $z\in D(B)$ and solves
		\begin{equation}\label{falenaferrea}
			\begin{cases}
				Bz=0, \\
				\beta z(1)-(az'')'(1)=\lambda,\\
				\gamma z'(1)+(az'')(1)=\mu.
			\end{cases}
		\end{equation}
	\end{Proposition}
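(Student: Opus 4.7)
My plan is to split the argument into three steps: norm equivalence, well-posedness of the variational problem via Lax--Milgram, and elliptic regularity together with the recovery of the boundary conditions.

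For the first step, the equivalence of $\|\cdot\|_{2,a}$, $\|\cdot\|_2$ and $\|\cdot\|_{2,\circ}$ on $K^2_{a,0}(0,1)$ is already established in Proposition \ref{normeequivalenti}, so I only need to compare $|||\cdot|||$ with $\|\cdot\|_{2,\circ}$. Since $\gamma>0$, the inequality $|||z|||^2\ge\min\{1,\gamma\}\|z\|_{2,\circ}^2$ is immediate, and the reverse bound $|||z|||^2\le(\beta C_1+\gamma+1)\|z\|_{2,\circ}^2$ follows by observing that $z(0)=0$ gives $|z(1)|\le\|z'\|_{L^2(0,1)}$ and then invoking \eqref{stimaL2}.

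For existence and uniqueness I would define
\[
B(z,\varphi):=\int_0^1 az''\varphi''\,dx+\beta z(1)\varphi(1)+\gamma z'(1)\varphi'(1),\qquad L(\varphi):=\lambda\varphi(1)+\mu\varphi'(1)
\]
on the Hilbert space $(K^2_{a,0}(0,1),|||\cdot|||)$. The form $B$ is continuous by Cauchy--Schwarz and coercive, since $B(z,z)=|||z|||^2$. Continuity of $L$ reduces to the estimates $|\varphi'(1)|^2\le\|\varphi\|_{2,\circ}^2\le C_2|||\varphi|||^2$ and $|\varphi(1)|^2\le\|\varphi'\|_{L^2(0,1)}^2\le C_1\|\varphi\|_{2,\circ}^2\le C_1C_2|||\varphi|||^2$, which together yield $|L(\varphi)|\le C_{a,K,\lambda,\mu}|||\varphi|||$ with the exact constant appearing in the statement. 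Lax--Milgram then provides a unique $z\in K^2_{a,0}(0,1)$, and testing the identity with $\varphi=z$ gives $|||z|||\le C_{a,K,\lambda,\mu}$; the chain of inequalities above then produces $\|z\|_{L^2(0,1)}^2\le C_1C_2C_{a,K,\lambda,\mu}^2$, which is \eqref{eroeferreo}.

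For the elliptic regularity, testing against $\varphi\in\mathcal{C}_c^\infty(0,1)\subset K^2_{a,0}(0,1)$ yields $(az'')''=0$ in $\mathcal{D}'(0,1)$, so $az''$ is an affine function $Ax+C$, hence $az''\in H^2(0,1)$ and $z\in\mathcal{W}(0,1)$. In the (WD) case $z'(0)=0$ is built into $K^2_{a,0}(0,1)$, so $z\in D(B)$ automatically. The step I expect to be the main obstacle is the (SD) case, where $K^2_{a,0}(0,1)=K^2_a(0,1)$ carries no information at $x=0$ and I still need $(az'')(0)=0$. Here I would exploit the fact that in the (SD) regime $1/a\notin L^1(0,1)$ (see \cite{BoCaFr}): the regularity $\sqrt{a}z''\in L^2(0,1)$ forces $\int_0^1(Ax+C)^2/a(x)\,dx<+\infty$, which is impossible unless $C=0$, because otherwise the integrand behaves like $C^2/a(x)$ near $0$ and fails to be integrable; hence $z\in\mathcal{W}_0(0,1)=D(B)$. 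Finally, substituting \eqref{GF0new} into the variational identity and using $(az'')''=0$ reduces it to
\[
\bigl(\beta z(1)-(az'')'(1)-\lambda\bigr)\varphi(1)+\bigl(\gamma z'(1)+(az'')(1)-\mu\bigr)\varphi'(1)=0\quad\forall\,\varphi\in K^2_{a,0}(0,1),
\]
and selecting test functions with $(\varphi(1),\varphi'(1))$ equal to $(1,0)$ and $(0,1)$ recovers the two boundary conditions in \eqref{falenaferrea}, while $(az'')''=0$ is the interior equation.
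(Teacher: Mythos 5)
Your proof is correct and follows the same overall architecture as the paper's: reduce the norm equivalence to Proposition \ref{normeequivalenti}, apply Lax--Milgram to the coercive form on $(K^2_{a,0}(0,1),|||\cdot|||)$, get both estimates in \eqref{eroeferreo} by testing with $z$ itself and chaining $\|z\|^2_{L^2(0,1)}\le C_1\|z\|^2_{2,\circ}\le C_1C_2|||z|||^2$, and recover $(az'')''=0$ by testing against $\mathcal{C}_c^\infty(0,1)$; your constants match the statement exactly. The one place where you genuinely diverge is the derivation of $(az'')(0)=0$ in the strongly degenerate case: the paper obtains it from the residual boundary term $(az''\varphi')(0)$ that the Gauss--Green formula \eqref{GGnew} leaves in the variational identity (this term must vanish for every admissible $\varphi$, and a test function with $\varphi'(0)\neq 0$ forces the conclusion), whereas you deduce it structurally from the facts that $az''$ is affine, that $\sqrt{a}\,z''\in L^2(0,1)$ means $\int_0^1 (az'')^2/a\,dx<+\infty$, and that $1/a\notin L^1(0,1)$ when $a$ is (SD) --- a fact the paper itself invokes from \cite{BoCaFr} in the proof of Lemma \ref{boundaryconditions_SD}. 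Your route is slightly more self-contained, since you establish $z\in\mathcal{W}_0(0,1)$ before integrating by parts and can then use the cleaner formula \eqref{GF0new} with no boundary contribution at $x=0$; the paper's route instead reads the missing condition off the weak formulation, which is the argument that would survive if $az''$ were not explicitly known to be affine. Both are valid, and the remaining step (choosing test functions with $(\varphi(1),\varphi'(1))=(1,0)$ and $(0,1)$, e.g.\ polynomials vanishing to second order at $0$) is routine.
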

\begin{proof} 
Clearly, 
for all $z \in K^2_{a,0}(0,1)$,
$\ds\|z\|^2_{2, \circ}\le \max\left\{ 1, \frac{1}{\gamma}\right\}|||z|||^2$. Moreover, by \eqref{star}, 
\[
|||z|||^2 \le \max\{1, \gamma\}\|z\|_{2, \circ}^2+\beta  \|z\|^2_{2,a},
\]
 for all $z \in K^2_{a,0}(0,1)$.
The first part of the thesis follows by Proposition \ref{normeequivalenti}, which holds in $K^2_{a}(0,1)$ and, in particular, in $K^2_{a,0}(0,1)$.

Now, consider the bilinear and symmetric form $\Lambda: K^2_{a,0}(0,1) \times K^2_{a,0}(0,1) \rightarrow \R$ such that
\[
\Lambda (z, \varphi) := \int_0^1 az'' \varphi'' dx + \beta z(1)\varphi(1)+\gamma z'(1)\varphi'(1).
\]
Clearly, $\Lambda$ is coercive and continuous. Now, consider the linear functional
\[
\Gamma( \varphi):= \lambda \varphi(1)+\mu \varphi'(1),
\]
with $\varphi \in K^2_{a,0}(0,1)$ and $\lambda,\mu\in\mathbb{R}$. The functional $ \Gamma$ is continuous and linear; thus, by the Lax-Milgram Theorem, there exists a unique solution $z \in K^2_{a,0}(0,1)$ of
\begin{equation}\label{05}
\Lambda (z, \varphi)= \Gamma (\varphi)
\end{equation}
for all $\varphi \in K^2_{a,0}(0,1)$; in particular,
\begin{equation}\label{04}
\int_0^1 a(z'')^2dx+ \beta z^2(1)+\gamma (z'(1))^2 =  \ \lambda z(1)+\mu z'(1).
\end{equation}
Now, we will prove \eqref{eroeferreo}. To this aim observe that
\begin{equation}\label{termineux}
|z(x)|^2 \le \|z'\|^2_{L^2(0,1)}\le 2\max\left\{ 1, \frac{1}{a(1)(2-K)}\right\}\|z\|^2_{2, \circ}
\end{equation}
for all $x \in (0,1]$,
and, by the definition of $\|\cdot\|^2_{2, \circ}$,
\begin{equation}\label{stimaz'(1)}
|z'(1)|^2 \le \|z\|^2_{2, \circ}
\end{equation}
 for all $z \in K^2_{a,0}(0,1)$.
By \eqref{04}, \eqref{termineux} and \eqref{stimaz'(1)} we have 
\[
\begin{aligned}
|||z|||^2 &= \lambda z(1)+\mu z'(1) \le 
|\lambda|\sqrt{2\max\left\{ 1, \frac{1}{a(1)(2-K)}\right\}}\|z\|_{2, \circ}+ |\mu| \|z\|_{2, \circ}\\
& \le C_{a,K, \lambda, \mu}|||z|||,
\end{aligned}
\]
thus
\[
|||z|||\le C_{a,K, \lambda, \mu}.
\]
Moreover, by \eqref{stimaL2},  we have

\[
\begin{aligned}
\|z\|^2_{L^2(0,1)} &\le 2\max\left\{1, \frac{1}{a(1)(2-K)}\right\}	\|z\|_{2, \circ}^2 \le   2\max\left\{1, \frac{1}{a(1)(2-K)}\right\}\max\left\{ 1, \frac{1}{\gamma}\right\}|||z|||^2
\\
&\le  2\max\left\{1, \frac{1}{a(1)(2-K)}\right\}\max\left\{ 1, \frac{1}{\gamma}\right\} C_{a,K, \lambda, \mu}^2.
\end{aligned}
\]

Now, we will prove that $z$ belongs to $D(B)$ and solves \eqref{falenaferrea}. To this aim,  consider again \eqref{05}; clearly, it holds for every $\varphi \in C_c^\infty(0,1)$, so that 
$
\int_0^1 a z''\varphi''dx=0 \mbox{ for all }\varphi \in C_c^\infty(0,1).
$
Thus
$az''$ is constant a.e. in $(0,1)$ (see, e.g., \cite[Lemma 1.2.1]{JLJ}) and so 
$
(az'')'' =0$ a.e.  in  $(0,1),
$
in particular
$Az= (az'')'' \in L^2(0,1)$ (this implies $az'' \in H^2(0,1)$, by \cite[Lemma 2.1]{CF}) and $z'(0)=0$ in the weakly degenerate case, since $z \in K^2_{a,0}(0,1)$.

Now, coming back to \eqref{05} and using \eqref{GGnew} or \eqref{GF0new}, for all $\varphi \in K^2_{a,0}(0,1)$ we have
\begin{equation*}
	\begin{aligned}
		&\int_0^1 az''\varphi'' dx+ \beta z(1)\varphi(1)+\gamma z'(1)\varphi'(1)=\lambda \varphi(1)+\mu \varphi'(1)\\ &\Longleftrightarrow -(az'')'(1)\varphi(1)+(az''\varphi')(1)+\beta z(1)\varphi(1)+\gamma (z'\varphi')(1) = \lambda \varphi(1)+\mu \varphi'(1)
	\end{aligned}
\end{equation*}
in the weakly degenerate case and 
\begin{equation*}
	\begin{aligned}
		&\int_0^1 az''\varphi'' dx+ \beta z(1)\varphi(1)+\gamma z'(1)\varphi'(1)=\lambda \varphi(1)+\mu \varphi'(1)\\ &\Longleftrightarrow -(az'')'(1)\varphi(1)+(az''\varphi')(1) - (az''\varphi')(0)+\beta z(1)\varphi(1)+\gamma (z'\varphi')(1) = \lambda \varphi(1)+\mu \varphi'(1),
	\end{aligned}
\end{equation*}
in the strongly one. Thus,
$
-(az'')'(1) + \beta z(1)=\lambda$ and $\gamma z'(1)+(az'')(1)=\mu,
$ and, in the strongly degenerate case $(az'')(0)=0$;
that is  $z \in  D(B)$ and solves \eqref{falenaferrea}.
\end{proof}
		\subsection{Stabilization in the case $\beta,\gamma > 0$}\label{section 6}
	In this subsection we will study the stability of \eqref{(P_feed)} in the case $\beta,\gamma > 0$. To this aim we give the following definition. 	
	\begin{Definition}
		Let $y$ be a mild solution of (\ref{(P_feed)}) and define its energy  as
		\begin{equation*}
			E_y(t):=\frac{1}{2}\int_0^1 \Biggl (y^2_t(t,x)+a(x)y^2_{xx}(t,x) \Biggr )dx+\frac{\beta}{2}y^2(t,1)+\frac{\gamma}{2}y_x^2(t,1)\quad\,\,\,\,\,\,\forall\;t\ge 0.
		\end{equation*}
	\end{Definition}
Thus, if $y$ is a mild solution, then
	\begin{equation}\label{stimapun}
		y^2(t,1) \le \frac{2}{\beta}E_y(t),
	\end{equation}
	\begin{equation}\label{stimapun0}
		y_x^2(t,1) \le \frac{2}{\gamma}E_y(t).
	\end{equation}

	In particular, it is possible to prove that the energy is a non increasing function.
	
	\begin{Theorem}\label{teorema energia decr}
		Assume $a$ (WD) or (SD) and let $y$ be a  classical solution of (\ref{(P)}). Then the energy is non increasing. In particular,
		\[
			\frac{dE_y(t)}{dt}=-y^2_t(t,1)-y^2_{tx}(t,1),\quad\,\,\,\,\,\,\,\forall\;t\ge 0.
		\]
	\end{Theorem}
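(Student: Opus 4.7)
The plan is a direct energy differentiation: compute $\frac{d}{dt}E_y(t)$, eliminate the fourth-order operator via the Gauss-Green formula \eqref{GF0new}, and then collapse the boundary terms at $x=1$ using the dissipative conditions \eqref{E3}. Since $y$ is a classical solution, $y(t,\cdot)\in D(B)\subset \mathcal W_0(0,1)$ and $y_t(t,\cdot)\in K^2_{a,0}(0,1)$, so all required regularity is available and the boundary conditions at $x=0$ are absorbed automatically by \eqref{GF0new}.

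First, I would differentiate under the integral sign, obtaining
\[
\frac{dE_y(t)}{dt}=\int_0^1\bigl(y_t y_{tt}+a y_{xx}y_{txx}\bigr)dx+\beta y(t,1)y_t(t,1)+\gamma y_x(t,1)y_{tx}(t,1).
\]
Substituting $y_{tt}=-(ay_{xx})_{xx}$ from the equation and applying \eqref{GF0new} with the pair $(y(t,\cdot),y_t(t,\cdot))\in \mathcal W_0(0,1)\times K^2_{a,0}(0,1)$, the two integrals $\int_0^1 y_t(ay_{xx})_{xx}dx$ and $\int_0^1 ay_{xx}y_{txx}dx$ cancel up to the boundary terms at $x=1$, yielding
\[
\frac{dE_y(t)}{dt}=-(ay_{xx})_x(t,1)y_t(t,1)+(ay_{xx})(t,1)y_{tx}(t,1)+\beta y(t,1)y_t(t,1)+\gamma y_x(t,1)y_{tx}(t,1).
\]

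Next I would group the factors of $y_t(t,1)$ and $y_{tx}(t,1)$ and use the two boundary relations in \eqref{E3}, namely $\beta y(t,1)-(ay_{xx})_x(t,1)=-y_t(t,1)$ and $\gamma y_x(t,1)+(ay_{xx})(t,1)=-y_{tx}(t,1)$. This gives
\[
\frac{dE_y(t)}{dt}=y_t(t,1)\bigl[\beta y(t,1)-(ay_{xx})_x(t,1)\bigr]+y_{tx}(t,1)\bigl[\gamma y_x(t,1)+(ay_{xx})(t,1)\bigr]=-y_t^2(t,1)-y_{tx}^2(t,1),
\]
which is the claimed identity and shows $E_y$ is non increasing.

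The only delicate point is justifying that no boundary contribution survives at $x=0$: this is precisely the role of the space $\mathcal W_0(0,1)$ and of the vanishing limit $\lim_{\delta\to 0^+}(ay_{xx}y_{tx})(\delta)=0$ built into \eqref{GF0new} (separately handled in Subsection \ref{section 3}.1 for the strongly degenerate case via the bound $|(ay_{xx})(\delta)|\le\sqrt{\delta}\,\|(ay_{xx})'\|_{L^2(0,1)}$). No extra argument is needed here for the classical-solution statement; for mild solutions a density/approximation argument as in \cite{CF_Beam} would extend the result, but the statement under consideration only asserts it for classical solutions.
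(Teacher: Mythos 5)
Your proposal is correct and follows essentially the same route as the paper: both multiply by (equivalently, pair the equation with) $y_t$, apply the Gauss--Green formula \eqref{GF0new} so that only the $x=1$ boundary terms survive, and then use the dissipative conditions \eqref{E3} to obtain $\frac{d}{dt}E_y(t)=-y_t^2(t,1)-y_{tx}^2(t,1)$. The only difference is bookkeeping — you differentiate $E_y$ directly and substitute the equation, while the paper starts from $0=\int_0^1(y_{tt}+(ay_{xx})_{xx})y_t\,dx$ and recognizes the full time derivative of the energy; the algebra is identical.
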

	\begin{proof}
		Multiplying the equation $y_{tt}+Ay=0$ by $y_t$ and integrating over $(0,1)$, we have
		\[
		0=\frac{1}{2}\int_0^1 \Bigl (y^2_t\Bigr )_tdx+\int_0^1 (ay_{xx})_{xx}\,y_tdx.
		\]
		Using the boundary conditions and \eqref{GF0new}, one has 
		\begin{equation*}
			\begin{aligned}
				0&=\frac{1}{2}\int_0^1 \Bigl (y^2_t\Bigr )_tdx+(ay_{xx})_x(t,1)y_t(t,1)-y_{tx}(t,1)a(1)y_{xx}(t,1) +\int_0^1ay_{xx}y_{txx}dx\\
				&=\frac{1}{2}\frac{d}{dt}\Biggl (\int_0^1\Bigl (y^2_t+ay^2_{xx}\Bigr )dx+\beta y^2(t,1)+\gamma y_x^2(t,1)\Biggr )+y^2_t(t,1)+y_{tx}^2(t,1)\\ &=\frac{d}{dt}E_y(t)+y^2_t(t,1)+y_{tx}^2(t,1).
			\end{aligned}
		\end{equation*}
		Hence
		\begin{equation*}
			\frac{d}{dt}E_y(t)=-y^2_t(t,1)-y^2_{tx}(t,1)\le 0,\,\,\,\,\,\,\,\quad\forall\; t\ge 0
		\end{equation*}
		and, consequently, the energy $E_y$ associated to $y$ is non increasing.
	\end{proof}

	\begin{Proposition}\label{Prop 3.1}
		Assume  $a$  (WD)  or (SD). If $y$ is a classical solution of (\ref{(P_feed)}), then 
		\begin{equation}\label{prima uguaglianza}
			\begin{aligned}
				0&=2\int_0^1\Bigl [xy_ty_x \Bigr ]^{t=T}_{t=s}dx-\int_s^Ty^2_t(t,1)dt+\int_{Q_s}y^2_tdx\,dt\\
				&+\int_{Q_s} (3a-xa')y_{xx}^2dx\,dt+2\beta\int_s^Ty_x(t,1)y(t,1)dt+2\int_s^Ty_x(t,1)y_t(t,1)dt\\
				&+2\gamma\int_s^Ty_x^2(t,1)dt+2\int_s^Ty_x(t,1)y_{tx}(t,1)dt  -  \int_s^T a(1)y_{xx}^2(t,1)dt
			\end{aligned}
		\end{equation}
		for every $0<s<T$. Here $Q_s:=(s,T)\times (0,1)$.
	\end{Proposition}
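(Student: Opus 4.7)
The approach is a standard multiplier identity: multiply the equation $y_{tt}+(ay_{xx})_{xx}=0$ by $2xy_x$ and integrate over $Q_s=(s,T)\times(0,1)$. This will produce (\ref{prima uguaglianza}) once one integrates by parts, exploits the boundary conditions at $x=1$, and controls the limits at the degenerate endpoint $x=0$. The computation runs in parallel to the proof of the ``seconda uguaglianza'' (Theorem 3.4), with two modifications: the time interval is $(s,T)$ instead of $(0,T)$, and the clamped condition at $x=1$ is replaced by the dissipative ones of \eqref{(P_feed)}.

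First I would handle the temporal term. Integration by parts in $t$ gives
\[
2\int_{Q_s} y_{tt}\,xy_x\,dx\,dt = 2\int_0^1[xy_ty_x]_{t=s}^{t=T}dx - \int_{Q_s}x(y_t^2)_x\,dx\,dt,
\]
and a further integration by parts in $x$, combined with $y_t(t,0)=0$ (since $y(t,0)=0$), converts the last integral into $-\int_s^T y_t^2(t,1)\,dt+\int_{Q_s}y_t^2\,dx\,dt$.

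Next I would treat the spatial term by reproducing the chain of integrations by parts on $(\delta,1)$ that already appears in equation \eqref{Natale}: the outcome, before passing to the limit, is
\[
\int_s^T\!\!\int_\delta^1 xy_x(ay_{xx})_{xx}dx\,dt = \int_s^T\!\bigl[xy_x(ay_{xx})_x-ay_xy_{xx}-\tfrac12 xay_{xx}^2\bigr]_{x=\delta}^{x=1}dt + \tfrac12\int_s^T\!\!\int_\delta^1(3a-xa')y_{xx}^2\,dx\,dt.
\]
Letting $\delta\to 0^+$, I would invoke Lemma \ref{boundaryconditions_WD} in the weakly degenerate case and Lemma \ref{boundaryconditions_SD} in the strongly degenerate case: each of the three lower boundary terms is precisely of the form handled there (the factor $\delta$ kills the $(ay'')_x$-term via point 1 of Lemma \ref{boundaryconditions_SD}; the middle term vanishes by item 2; the weight $\delta a(\delta)y_{xx}^2(\delta)$ by item 4).

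For the $x=1$ contributions I would substitute the feedback conditions $(ay_{xx})_x(t,1)=\beta y(t,1)+y_t(t,1)$ and $(ay_{xx})(t,1)=-\gamma y_x(t,1)-y_{tx}(t,1)$ to turn the bracketed boundary terms into exactly
\[
y_x(t,1)\bigl[\beta y(t,1)+y_t(t,1)\bigr] + \gamma y_x^2(t,1)+y_x(t,1)y_{tx}(t,1) - \tfrac12 a(1)y_{xx}^2(t,1),
\]
and then multiply the resulting identity by $2$. Summing the temporal and spatial pieces and using $y_{tt}+(ay_{xx})_{xx}=0$ gives (\ref{prima uguaglianza}). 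The only delicate step is the passage to the limit at $x=0$, but as just outlined it is already encoded in Lemmas \ref{boundaryconditions_WD}--\ref{boundaryconditions_SD}; the boundary conditions at $x=0$ in (\ref{(P_feed)}) coincide with those used there, so no new analysis is needed.
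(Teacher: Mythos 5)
Your proposal is correct and follows essentially the same route as the paper: multiply by $xy_x$ (equivalently $2xy_x$), integrate by parts in $t$ using $y_t(t,0)=0$, reproduce the spatial integration-by-parts chain on $(\delta,1)$, kill the boundary terms at $x=\delta$ via Lemmas \ref{boundaryconditions_WD} and \ref{boundaryconditions_SD}, and substitute the feedback conditions at $x=1$ before multiplying by $2$. The collected $\delta$-boundary terms and the final $x=1$ contributions you write down coincide with those in \eqref{intparti3}--\eqref{intparti4}, so no further comment is needed.
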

	\begin{proof}
		Fix $s\in (0,T)$. Multiplying the equation in (\ref{(P_feed)}) by $xy_x$ and integrating over $Q_s$, we have 
		\begin{equation}\label{int parti}
			0=\int_{Q_s}y_{tt}xy_xdx\,dt+\int_{Q_s}(ay_{xx})_{xx}xy_xdx\,dt.
		\end{equation}
		Clearly the previous integrals are well defined and
		integrating by parts the first integral in (\ref{int parti}), we obtain
		\begin{equation}\label{intparti1}
			\begin{aligned}
				\int_{Q_s}y_{tt}xy_xdx\,dt=\int_0^1\Bigl [y_txy_x \Bigr ]^{t=T}_{t=s}dx-\frac{1}{2}\int_s^Ty_t^2(t,1)dt+\frac{1}{2}\int_{Q_s}y^2_tdx\,dt,
			\end{aligned}
		\end{equation}
		since $y_t(t,0)=0$.
		Now, fix $\delta \in (0, 1)$; then, 
		\begin{equation}\label{intparti3}
			\begin{aligned}
				\int_s^T\int_\delta^1 (ay_{xx})_{xx}xy_xdx\,dt&= \int_s^T [xy_x(ay_{xx})_x]_{x=\delta}^{x=1}dt - \int_s^T [(xy_x)_xay_{xx}]_{x=\delta}^{x=1}dt \\
				&+ \int_s^T\int_\delta^1 a(xy_x)_{xx}y_{xx}dx\,dt\\
						&=\int_s^T (y_x(ay_{xx})_x)(t,1)dt -  \int_s^T (xy_x(ay_{xx})_x)(t, \delta)dt\\
						&-\int_s^T(y_xay_{xx})(t,1)dt-  \int_s^T a(1)y_{xx}^2(t,1)dt \\
						&+ \int_s^T (y_xay_{xx})(t, \delta) dt + \int_s^T(xay_{xx}^2)(t, \delta)dt\\
				& + \int_s^T\int_\delta^1 a(xy_x)_{xx}y_{xx}dx\,dt.
			\end{aligned}
		\end{equation}
		Now, consider  the term
		$\int_\delta^1 a(xy_x)_{xx}y_{xx}dx$.
		Clearly,
		\[
		\begin{aligned}
			\int_\delta^1 a(xy_x)_{xx}y_{xx}dx &= 2\int_\delta^1 ay_{xx}^2dx + \frac{1}{2}\int_\delta^1 ax(y_{xx}^2)_xdx\\
			&=2\int_\delta^1 ay_{xx}^2dx  + \frac{1}{2} (ay_{xx}^2)(t,1) - \frac{1}{2} (xay_{xx}^2)(t,\delta)  -  \frac{1}{2}\int_\delta^1 ay_{xx}^2 dx\\
			&-\frac{1}{2}\int_\delta^1xa'y^2_{xx}dx.
		\end{aligned}
		\]
		By Lemmas \ref{boundaryconditions_WD} and \ref{boundaryconditions_SD}, it follows
		\[
		\lim_{\delta\to 0} (xay_{xx}^2)(t,\delta) =	\lim_{\delta\to 0}(xy_x(ay_{xx})_x)(t, \delta)=\lim_{\delta\to 0} (y_xay_{xx})(t, \delta) =0;
		\]
		moreover,  $xa'y^2_{xx}\in L^1(0,1)$. Hence, by the absolute continuity of the integral, one has
		\[
		\lim_{\delta\to 0}\int_\delta^1 a(xy_x)_{xx}y_{xx}dx = \frac{3}{2}\int_0^1 ay_{xx}^2dx  + \frac{1}{2}a(1) y_{xx}^2(t,1)-\frac{1}{2}\int_0^1xa'y^2_{xx}dx.
		\]
		Thus, by \eqref{intparti3}, one has the following equality
		\begin{equation}\label{intparti4}
			\begin{aligned}
				\int_{Q_s}(ay_{xx})_{xx} xy_xdx\,dt &=\int_s^T (y_x(ay_{xx})_x)(t,1)dt-\int_s^T(y_xay_{xx})(t,1)dt\\ &+\frac{3}{2}\int_{Q_s} ay_{xx}^2dx\,dt   -\frac{1}{2}   \int_s^T a(1)y_{xx}^2(t,1)dt-\frac{1}{2}\int_{Q_s}xa'y^2_{xx}dx\,dt.
			\end{aligned}
		\end{equation}
		Furthermore, by \eqref{int parti}, \eqref{intparti1} and \eqref{intparti4}, one has
		\[
			\begin{aligned}
				0&=\int_0^1\Bigl [y_txy_x \Bigr ]^{t=T}_{t=s}dx-\frac{1}{2}\int_s^Ty_t^2(t,1)dt+\frac{1}{2}\int_{Q_s}y^2_tdx\,dt\\
				&+\frac{3}{2}\int_{Q_s} ay_{xx}^2dx\,dt+\beta\int_s^Ty_x(t,1)y(t,1)dt+\int_s^Ty_x(t,1)y_t(t,1)dt\\
				&+\gamma\int_s^Ty_x^2(t,1)dt+\int_s^Ty_x(t,1)y_{tx}(t,1)dt  -\frac{1}{2}   \int_s^T a(1)y_{xx}^2(t,1)dt-\frac{1}{2}\int_{Q_s}xa'y^2_{xx}dx\,dt.
			\end{aligned}
		\]
		Multiplying the previous equality by $2$ we have the thesis.
	\end{proof}
	
	As a consequence of the previous equality, we have the next relation. 
	\begin{Proposition}\label{Prop 4.4}
		Assume  $a$  (WD)  or (SD). If $y$ is a classical solution of \eqref{(P_feed)}, then for all $0<s<T$ we have 
		\begin{equation}\label{equazione 2}
			\int_{Q_s}y^2_t\Bigl (\frac{K}{2}+1\Bigr )dx\,dt+\int_{Q_s}y^2_{xx}\Bigl (3a-xa'-\frac{K}{2}a\Bigr )dx\,dt
			=(B.T.),
		\end{equation}
		where 
		\[
		\begin{aligned}
			(B.T.)&= \frac{K}{2}\int_0^1\Bigl [yy_t \Bigr ]^{t=T}_{t=s}dx-2\int_0^1\Bigl [xy_ty_x \Bigr ]^{t=T}_{t=s}dx+\frac{K\beta}{2}\int_s^Ty^2(t,1)dt\\
			&+\frac{K}{2}\int_s^Ty(t,1)y_t(t,1)dt+\gamma\Bigl (\frac{K}{2}-2\Bigr )\int_s^Ty_x^2(t,1)dt\\ &+\Bigl (\frac{K}{2}-2\Bigr )\int_s^Ty_x(t,1)y_{tx}(t,1)dt+\int_s^Ty_t^2(t,1)dt-2\beta\int_s^Ty_x(t,1)y(t,1)dt\\
			&-2\int_s^Ty_x(t,1)y_t(t,1)dt+\int_s^Ta(1)y_{xx}^2(t,1)dt.
		\end{aligned}
		\]
	\end{Proposition}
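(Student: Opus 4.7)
The plan is to derive a second energy-type identity by multiplying the equation by a different multiplier ($\frac{K}{2}y$), and then combine it with \eqref{prima uguaglianza} from Proposition \ref{Prop 3.1} to form a linear combination in which the weighted integrals of $y_t^2$ and $ay_{xx}^2$ match the coefficients $\frac{K}{2}+1$ and $3a-xa'-\frac{K}{2}a$ appearing on the left-hand side of \eqref{equazione 2}.

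First I would multiply the PDE $y_{tt}+(ay_{xx})_{xx}=0$ by $\frac{K}{2}y$ and integrate over $Q_s$. Integration by parts in time (using the boundary terms at $t=s$ and $t=T$) gives
\[
\frac{K}{2}\int_{Q_s}y_{tt}y\,dx\,dt=\frac{K}{2}\int_0^1[yy_t]_{t=s}^{t=T}dx-\frac{K}{2}\int_{Q_s}y_t^2\,dx\,dt.
\]
For the spatial term, I would apply the Gauss-Green formula \eqref{GF0new}, valid since $y(t,\cdot)\in D(B)\subset\mathcal W_0(0,1)$: the boundary contributions at $x=0$ vanish because $y(t,0)=0$ together with either $y_x(t,0)=0$ in (WD) or $(ay_{xx})(t,0)=0$ in (SD). Hence
\[
\int_0^1(ay_{xx})_{xx}y\,dx=(ay_{xx})_x(t,1)y(t,1)-(ay_{xx})(t,1)y_x(t,1)+\int_0^1ay_{xx}^2\,dx,
\]
and I would substitute the two Robin-type conditions at $x=1$ from \eqref{(P_feed)}, namely $(ay_{xx})_x(t,1)=\beta y(t,1)+y_t(t,1)$ and $(ay_{xx})(t,1)=-\gamma y_x(t,1)-y_{tx}(t,1)$, to produce the boundary integrals involving $\beta y^2$, $y y_t$, $\gamma y_x^2$ and $y_x y_{tx}$ at $x=1$.

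Combining these computations yields the identity
\[
\frac{K}{2}\int_{Q_s}y_t^2\,dx\,dt-\frac{K}{2}\int_{Q_s}ay_{xx}^2\,dx\,dt=\frac{K}{2}\int_0^1[yy_t]_{t=s}^{t=T}dx+\text{(boundary terms at }x=1\text{)}.
\]
Adding this to \eqref{prima uguaglianza} of Proposition \ref{Prop 3.1} (after moving its distributed integrals to one side) gives exactly $(\frac{K}{2}+1)\int_{Q_s}y_t^2$ on the left and the bulk coefficient $3a-xa'-\frac{K}{2}a$ on $y_{xx}^2$. The boundary terms on the right add up and simplify: the $y^2(t,1)$ contribution yields the factor $\frac{K\beta}{2}$, the $y(t,1)y_t(t,1)$ contribution yields $\frac{K}{2}$, the $y_x^2(t,1)$ and $y_x(t,1)y_{tx}(t,1)$ contributions combine into the factor $\frac{K}{2}-2$ (since they received both $+\frac{K}{2}$ from the $\frac{K}{2}y$-multiplier and $-2$ from \eqref{prima uguaglianza}), and the remaining terms $\int_s^T y_t^2(t,1)\,dt$, $-2\beta\int y_x y$, $-2\int y_x y_t$ and $\int a(1)y_{xx}^2(t,1)$ come unchanged from \eqref{prima uguaglianza}.

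Everything here is bookkeeping of boundary terms, so the only genuine technical point is the justification of \eqref{GF0new} with the test function $v=y$; but since classical solutions belong to $\mathcal C([0,+\infty);D(B))$ by Theorem \ref{Theorem 2.6new} and $D(B)=\mathcal W_0(0,1)\subset K^2_{a,0}(0,1)$, the formula \eqref{GF0new} applies directly and no further limiting argument is needed. Therefore I do not anticipate a major obstacle; the main care is in tracking signs and coefficients, and in checking that no additional boundary trace at $x=0$ is produced by the multiplier $\frac{K}{2}y$, which is guaranteed by $y(t,0)=0$ together with the degenerate boundary condition in \eqref{(P_feed)}.
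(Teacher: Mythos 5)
Your proposal is correct and follows essentially the same route as the paper: multiply the equation by (a multiple of) $y$, integrate over $Q_s$, apply \eqref{GF0new} (legitimate since $y(t,\cdot)\in D(B)=\mathcal W_0(0,1)\subset K^2_{a,0}(0,1)$, so no limiting argument at $x=0$ is needed), substitute the conditions at $x=1$, and add the result to \eqref{prima uguaglianza}. Your sign bookkeeping --- keeping $\frac{K}{2}\int_{Q_s}y_t^2-\frac{K}{2}\int_{Q_s}ay_{xx}^2$ on the left before adding --- is exactly what produces the coefficients $\frac{K}{2}+1$ and $3a-xa'-\frac{K}{2}a$ together with the stated boundary terms.
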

	\begin{proof}
		Let $y$ be a classical solution of (\ref{(P_feed)}) and fix $s\in (0,T)$. Multiplying the equation in (\ref{(P_feed)}) by $y$, integrating over $Q_s$ and using (\ref{GF0new}), we obtain
		\begin{equation}\label{eq sommare1}
			\begin{aligned}
				0&=\int_{Q_s}y_{tt}ydx\,dt+\int_{Q_s}y(ay_{xx})_{xx}dx\,dt\\
				&=\int_0^1\Bigl [y_ty \Bigr ]^{t=T}_{t=s}dx-\int_{Q_s}y_t^2dx\,dt+\int_s^T(y(ay_{xx})_x)(t,1)dt\\
				&-\int_s^T(y_xay_{xx})(t,1)dt+\int_{Q_s}ay^2_{xx}dx\,dt.
			\end{aligned}
		\end{equation}
		Obviously,  all the previous integrals make sense and multiplying (\ref{eq sommare1}) by $\displaystyle\frac{K}{2}$, one has
		\begin{equation}\label{eq sommare2}
			\begin{aligned}
				0&=\frac{K}{2}\int_{Q_s}\Bigl (-y_t^2+ay^2_{xx}\Bigr )dx\,dt+\frac{K}{2}\int_0^1\Bigl [y_ty \Bigr ]^{t=T}_{t=s}dx\\
				&+\frac{K}{2}\int_s^Ty(t,1)(ay_{xx})_x(t,1)dt-\frac{K}{2}\int_s^Ta(1)y_x(t,1)y_{xx}(t,1)dt.
			\end{aligned}
		\end{equation}
		By summing (\ref{prima uguaglianza}) and (\ref{eq sommare2}), we get the thesis. 
	\end{proof}
	
	By \eqref{equazione 2}, we can get the next estimate.
	\begin{Proposition}\label{Prop 3.4}
		Assume $a$ (WD) or (SD)  and let $y$ be a classical solution of (\ref{(P_feed)}). Then there exists $\varepsilon_0>0$ such that for any $0<s<T$ 
			\[
			\begin{aligned}
				\frac{\varepsilon_0}{2}\int_{Q_s}\Biggl (y^2_t+ay^2_{xx} \Biggr )dx\,dt&\le\left( \vartheta + \varrho+\varsigma+ \Biggl (2-\frac{K}{2}\Biggr )\frac{1}{\gamma}\right) E_y(s)\\
	&+ \left(\frac{K}{4}+\frac{K\beta}{2}+ \beta\right) \int_s^Ty^2(t,1)dt+ \left(\beta+ 1+ \frac{2\gamma^2}{a(1)}\right)\int_s^Ty_x^2(t,1)dt,
			\end{aligned}
		\]
		where 
		\[\ds\vartheta:=   K\max\Biggl \{1,\frac{2}{a(1)(2-K)}, \frac{2}{\gamma}\Biggr \}, \; \ds\varrho:= 4 \max\left\{ \frac{2}{\gamma}, \frac{2}{a(1)(2-K)}, 1\right\}\]
		and
		\begin{equation}\label{varsigma}
		\ds\varsigma:= \max\left\{\frac{K}{4}+ 2, \frac{2}{a(1)}\right\}.\end{equation}
	\end{Proposition}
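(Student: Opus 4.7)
The plan is to start from the identity \eqref{equazione 2} of Proposition~\ref{Prop 4.4}, bound its left-hand side from below by a positive multiple of $\int_{Q_s}(y_t^2+ay_{xx}^2)\,dx\,dt$, and estimate the ten boundary contributions composing $(B.T.)$ one by one. Since $K<2$ the coefficient $\frac{K}{2}+1$ is already positive, and the pointwise bound $|xa'(x)|\le Ka(x)$ coming from \eqref{sup} yields $3a-xa'-\frac{K}{2}a\ge (3-\frac{3K}{2})a>0$. Choosing $\varepsilon_0:=\min\{K+2,\,6-3K\}>0$ then furnishes the desired lower bound on the LHS.

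For the two time-trace contributions $\frac{K}{2}\bigl[\int_0^1 yy_t\,dx\bigr]_{t=s}^{t=T}$ and $-2\bigl[\int_0^1 xy_ty_x\,dx\bigr]_{t=s}^{t=T}$, I would apply Young's inequality together with Proposition~\ref{normeequivalenti} (in particular \eqref{stimaL2}, which bounds $\|y\|_{L^2}^2$ and $\|y'\|_{L^2}^2$ in terms of $|y_x(1)|^2$ and $\|\sqrt{a}y''\|_{L^2}^2$), then use the pointwise energy bounds $|y_x(\tau,1)|^2\le\tfrac{2}{\gamma}E_y(\tau)$ from \eqref{stimapun0}, $\|\sqrt{a}y_{xx}(\tau,\cdot)\|_{L^2}^2\le 2E_y(\tau)$, $\|y_t(\tau,\cdot)\|_{L^2}^2\le 2E_y(\tau)$, together with the monotonicity $E_y(T)\le E_y(s)$ of Theorem~\ref{teorema energia decr}; these produce bounds of order $\vartheta E_y(s)$ and $\varrho E_y(s)$, respectively. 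Young's inequality applied to $\frac{K}{2}\int_s^T y(t,1)y_t(t,1)\,dt$ and to $-2\beta\int_s^T y_x(t,1)y(t,1)\,dt$ then supplies the remaining parts of the declared coefficients of $\int_s^T y^2(t,1)\,dt$ and $\int_s^T y_x^2(t,1)\,dt$, while the residual $\int_s^T y_t^2(t,1)\,dt$ and $\int_s^T y_{tx}^2(t,1)\,dt$ pieces produced along the way are all controlled by $E_y(s)$ by integrating the identity $\tfrac{dE_y}{dt}=-y_t^2(t,1)-y_{tx}^2(t,1)$ from Theorem~\ref{teorema energia decr}.

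The two structurally delicate contributions are the pair $\gamma(\frac{K}{2}-2)\int_s^T y_x^2(t,1)\,dt+(\frac{K}{2}-2)\int_s^T y_x(t,1)y_{tx}(t,1)\,dt$ and the single term $\int_s^T a(1)y_{xx}^2(t,1)\,dt$. For the first, I would use $\int_s^T y_x(t,1)y_{tx}(t,1)\,dt=\tfrac{1}{2}\bigl[y_x^2(t,1)\bigr]_{t=s}^{t=T}$ to rewrite the combined contribution as $\bigl(\tfrac{K}{2}-2\bigr)\gamma\int_s^T y_x^2(t,1)\,dt+\tfrac{K/2-2}{2}\bigl(y_x^2(T,1)-y_x^2(s,1)\bigr)$; since $\frac{K}{2}-2<0$, both the distributed integral and the $y_x^2(T,1)$ piece give non-positive contributions that may be dropped when estimating from above, leaving at most $\tfrac{2-K/2}{2}y_x^2(s,1)\le\tfrac{2-K/2}{\gamma}E_y(s)$ by \eqref{stimapun0}. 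For the second, use the second boundary condition in \eqref{(P_feed)} to write $a(1)y_{xx}(t,1)=-\gamma y_x(t,1)-y_{tx}(t,1)$; squaring with $(A+B)^2\le 2A^2+2B^2$, integrating, and invoking Theorem~\ref{teorema energia decr} yields $\int_s^T a(1)y_{xx}^2(t,1)\,dt\le\tfrac{2\gamma^2}{a(1)}\int_s^T y_x^2(t,1)\,dt+\tfrac{2}{a(1)}E_y(s)$. Collecting the ten estimates produces the claim. The main obstacle is the first delicate pair: a naive Young's inequality applied directly to $y_xy_{tx}$ would generate an unabsorbable $\int_s^T y_x^2(t,1)\,dt$ on the left-hand side, whereas exploiting the sign of $\frac{K}{2}-2$ is what yields the sharp $(2-K/2)/\gamma$ dependence on $E_y(s)$ and keeps the coefficients $\varsigma$ and $1+\tfrac{2\gamma^2}{a(1)}$ free of distributed contributions.
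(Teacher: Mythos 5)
Your proposal is correct and follows essentially the same route as the paper: starting from \eqref{equazione 2}, lower-bounding the distributed terms via $|xa'|\le Ka$, and estimating the boundary terms exactly as in \eqref{stima*}--\eqref{quinto BT}, including the sign argument for the $(\frac{K}{2}-2)$ pair and the use of the boundary condition for $a(1)y_{xx}^2(t,1)$. The only cosmetic differences are your slightly larger explicit choice $\varepsilon_0=\min\{K+2,6-3K\}$ (the paper takes $\varepsilon_0\le 2-K$), and that to land exactly on $\varsigma$ as a maximum rather than a sum one must group the residual $y_t^2(t,1)$ and $y_{tx}^2(t,1)$ integrals together before invoking $\frac{d}{dt}E_y=-y_t^2(t,1)-y_{tx}^2(t,1)$.
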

	\begin{proof} Since by assumption $K<2$, there exists $\varepsilon _0>0$ such that $2-K \ge \varepsilon _0$. Thus,
		\begin{equation*}
			3a-xa'-\frac{K}{2}a=\frac{3(2-K)a+2(Ka-xa')}{2}\ge \frac{\varepsilon_0}{2}a\,\,\,\text{ and }\,\,\,1+\frac{K}{2}\ge \frac{2-K}{2}\ge  \frac{\varepsilon_0}{2} .
		\end{equation*}
		As a consequence, the boundary terms  in (\ref{equazione 2}) can be estimated by below in the following way
		\begin{equation}\label{saccoferreo}
			(B.T.)=\int_{Q_s}y^2_t\Bigl (1+\frac{K}{2}\Bigr )dx\,dt+\int_{Q_s}y^2_{xx}\Bigl (3a-\frac{K}{2}a-xa'\Bigr )dx\,dt\ge \frac{\varepsilon_0}{2}\int_{Q_s}\Biggl (y^2_t+ay^2_{xx} \Biggr )dx\,dt.
		\end{equation}
		Now, we estimate the boundary terms from above. By Proposition \ref{normeequivalenti}, one has
	\begin{equation*}
			\begin{aligned}
				\Biggl |\int_0^1y(\tau,x)y_t(\tau,x)dx \Biggr |&\le \frac{1}{2}\int_0^1y^2_t(\tau,x)dx+\frac{1}{2}\int_0^1y^2(\tau,x)dx\\
					&\le \frac{1}{2}\int_0^1y^2_t(\tau,x)dx+
			 y_x^2(\tau, 1)+\frac{1}{a(1)(2-K)}\int_0^1a(x)y_{xx}^2(\tau,x)dx\\
				&\le \max\Biggl \{1,\frac{2}{a(1)(2-K)}, \frac{2}{\gamma}\Biggr \}E_y(\tau)
			\end{aligned}
		\end{equation*}
		for all $\tau \in [s, T]$. Hence, by Theorem \ref{teorema energia decr},
		\begin{equation}\label{stima*}
		\int_0^1\Bigl [y(\tau,x)y_t(\tau,x)\Bigr]_{\tau=s}^{\tau=T}dx \le  2\max\Biggl \{1,\frac{2}{a(1)(2-K)}, \frac{2}{\gamma}\Biggr \}E_y(s).
		\end{equation}
	Using again Proposition \ref{normeequivalenti}, one has
		\[
		\begin{aligned}
2\int_0^1xy_x(\tau,x)y_t(\tau,x)dx
			&\le \int_0^1y_x^2(\tau,x)dx + \int_0^1 y_t^2(\tau,x) dx\\
			&\le  2|y_x(\tau, 1)|^2+\frac{2}{a(1)(2-K)}\int_0^1a(x)y_{xx}^2(\tau,x)dx + \int_0^1 y_t^2(\tau,x) dx\\
			&\le 2 \max\left\{ \frac{2}{\gamma}, \frac{2}{a(1)(2-K)}, 1\right\} E_y(\tau),
		\end{aligned}
		\]
			for all $\tau \in [s, T]$,
		and, by Theorem \ref{teorema energia decr},

	\begin{equation}\label{stima**}
		2\int_0^1\Bigl [xy_x(\tau,x)y_t(\tau,x) \Bigr ]_{\tau=s}^{\tau=T}dx \le  4 \max\left\{ \frac{2}{\gamma}, \frac{2}{a(1)(2-K)}, 1\right\} E_y(s).
		\end{equation}
		Now, since $K<2$, we have
\begin{equation}\label{primo BT}
\begin{aligned}
	&\gamma\Biggl (\frac{K}{2}-2\Biggr )\int_s^Ty_x^2(t,1)dt+\Biggl (\frac{K}{2}-2\Biggr )\int_s^Ty_x(t,1)y_{tx}(t,1)dt \\ &\le \Biggl (\frac{K}{2}-2\Biggr )\frac{1}{2}(y_x^2(T,1)-y_x^2(s,1))\le \left(2-\frac{K}{2}\right)\frac{1}{2}y_x^2(s,1)
	\\&\le \Biggl (2-\frac{K}{2}\Biggr )\frac{1}{\gamma}E_y(s).
\end{aligned}
\end{equation}
Obviously
\begin{equation}\label{secondo BT}
	\frac{K}{2}\int_s^Ty(t,1)y_t(t,1)dt\le \frac{K}{4}\int_s^Ty^2(t,1)dt+\frac{K}{4}\int_s^Ty_t^2(t,1)dt,
\end{equation}
\begin{equation}\label{terzo BT}
2\beta \int_s^Ty_x(t,1)y(t,1)dt\le \beta\int_s^Ty_x^2(t,1)dt+\beta\int_s^Ty^2(t,1)dt
\end{equation}
and
\begin{equation}\label{quarto BT}
 \int_s^T2y_x(t,1)y_t(t,1)dt\le \int_s^Ty_x^2(t,1)dt+\int_s^Ty_t^2(t,1)dt.
\end{equation}
Furthermore, recalling that $\gamma y_x(t,1)+a(1)y_{xx}(t,1)+y_{tx}(t,1)=0$,
\begin{equation}\label{quinto BT}
	\frac{1}{a(1)}\int_s^Ta^2(1)y^2_{xx}(t,1)dt\le \frac{2}{a(1)}\gamma^2\int_s^Ty_x^2(t,1)dt+\frac{2}{a(1)}\int_s^Ty_{tx}^2(t,1)dt.
\end{equation}
Hence, by \eqref{saccoferreo}, \eqref{stima*}-\eqref{quinto BT}
\[
\begin{aligned}
	&	\frac{\varepsilon_0}{2}\int_{Q_s}\Biggl (y^2_t+ay^2_{xx} \Biggr )dx\,dt\le  K\max\Biggl \{1,\frac{2}{a(1)(2-K)}, \frac{2}{\gamma}\Biggr \}E_y(s)\\
	&+4 \max\left\{ \frac{2}{\gamma}, \frac{2}{a(1)(2-K)}, 1\right\} E_y(s)+ \Biggl (2-\frac{K}{2}\Biggr )\frac{1}{\gamma}E_y(s)\\
	&+ \left(\frac{K}{4}+\frac{K\beta}{2}+ \beta\right) \int_s^Ty^2(t,1)dt+ \left(\beta+ 1+ \frac{2}{a(1)}\gamma^2\right)\int_s^Ty_x^2(t,1)dt\\
	&+ \left(\frac{K}{4}+ 2\right)\int_s^Ty_t^2(t,1)dt+ \frac{2}{a(1)}\int_s^Ty_{tx}^2(t,1)dt\\
	&\le  K\max\Biggl \{1,\frac{2}{a(1)(2-K)}, \frac{2}{\gamma}\Biggr \}E_y(s)\\
	&+4 \max\left\{ \frac{2}{\gamma}, \frac{2}{a(1)(2-K)}, 1\right\} E_y(s)+ \Biggl (2-\frac{K}{2}\Biggr )\frac{1}{\gamma}E_y(s)\\
	&+ \left(\frac{K}{4}+\frac{K\beta}{2}+ \beta\right) \int_s^Ty^2(t,1)dt+ \left(\beta+ 1+ \frac{2}{a(1)}\gamma^2\right)\int_s^Ty_x^2(t,1)dt\\
	&+ \max\left\{\frac{K}{4}+ 2, \frac{2}{a(1)}\right\}\int_s^T-\frac{d}{dt}E_y(t)dt\\
	& \le K\max\Biggl \{1,\frac{2}{a(1)(2-K)}, \frac{2}{\gamma}\Biggr \}E_y(s)\\
	&+4 \max\left\{ \frac{2}{\gamma}, \frac{2}{a(1)(2-K)}, 1\right\} E_y(s)+ \Biggl (2-\frac{K}{2}\Biggr )\frac{1}{\gamma}E_y(s)\\
	&+ \left(\frac{K}{4}+\frac{K\beta}{2}+ \beta\right) \int_s^Ty^2(t,1)dt+ \left(\beta+ 1+ \frac{2}{a(1)}\gamma^2\right)\int_s^Ty_x^2(t,1)dt\\
	&+ \max\left\{\frac{K}{4}+ 2, \frac{2}{a(1)}\right\} E_y(s).
\end{aligned}
\]
	\end{proof}
In the next proposition we will find an estimate from above for 
\[\ds\int_s^Ty^2(t,1)dt+\int_s^Ty_x^2(t,1)dt.\]

To this aim, since $\beta, \gamma >0$, we can consider the constant
\[
	C_\delta :=1-2C_2\max\{1,C_1\}\delta\Biggl (\frac{1}{\beta}+\frac{1}{\gamma}\Biggr ),
	\] 
	where $C_1$ and $C_2$ are the constants defined in Proposition \ref{prob variazionale}.
\begin{Proposition}\label{Prop 3.3}
	Assume $a$ (WD)  or (SD). If $y$ is a classical solution of (\ref{(P_feed)}), then for every $0<s<T$ and for every  $\delta \in (0, \nu)$
	 we have
		\[
		\begin{aligned}
			\int_s^T y^2(t,1)dt+\int_s^Ty_x^2(t,1)dt&\le\frac{2 \delta}{C_\delta}\int_s^TE_y(t)dt\\
			&+\frac{1}{C_\delta}\left(2+4 \frac{C_1^2C_2^2}{\beta} +  4\frac{C_1C_2^2}{\gamma} + \frac{1}{\delta} + 2\frac{C_1C_2^2\max\{1,C_1\}}{\delta}\right)E_y(s),
		\end{aligned}
	\]
	where
	\[
	\nu:= \frac{\beta \gamma}{2 C_2\max\{1,C_1\} (\beta+\gamma)}.
	\]
	
\end{Proposition}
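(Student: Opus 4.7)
The idea is to construct, for each time slice $t\in(s,T)$, an auxiliary function $z(t,\cdot)$ whose elliptic problem is precisely tuned so that testing the wave equation against $z$ generates the quantity $y^2(t,1)+y_x^2(t,1)$, while all remaining terms can be absorbed into the energy or into the boundary dissipation $y_t^2(t,1)+y_{tx}^2(t,1)$ using Theorem \ref{teorema energia decr}.

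\medskip

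\noindent\emph{Step 1: construction of $z$.} For each $t\in(s,T)$, invoke Proposition \ref{prob variazionale} with $\lambda=y(t,1)$ and $\mu=y_x(t,1)$ to obtain a unique $z(t,\cdot)\in D(B)$ solving
\[
Bz=0,\qquad \beta z(t,1)-(az'')'(t,1)=y(t,1),\qquad \gamma z_x(t,1)+(az'')(t,1)=y_x(t,1),
\]
together with the bounds
$|||z(t,\cdot)|||^2\le C^2_{a,K,y(t,1),y_x(t,1)}$ and $\|z(t,\cdot)\|^2_{L^2(0,1)}\le C_1C_2 C^2_{a,K,y(t,1),y_x(t,1)}$. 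By linearity of the variational problem, $z_t(t,\cdot)$ solves the same elliptic problem driven by $(y_t(t,1),y_{tx}(t,1))$, so the same two estimates hold for $z_t$ after replacing $(y(t,1),y_x(t,1))$ by $(y_t(t,1),y_{tx}(t,1))$.

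\medskip

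\noindent\emph{Step 2: key pointwise identity.} Testing the variational equality for $z(t,\cdot)$ against $\varphi=y(t,\cdot)\in K^2_{a,0}(0,1)$ yields
\[
\int_0^1 a\,y_{xx}z''\,dx+\beta z(t,1)y(t,1)+\gamma z_x(t,1)y_x(t,1)=y^2(t,1)+y^2_x(t,1).
\]
Multiplying the equation $y_{tt}+(ay_{xx})_{xx}=0$ by $z(t,\cdot)$, integrating in $x$, and applying \eqref{GF0new} to rewrite $\int_0^1(ay_{xx})''z\,dx$ produces boundary terms at $x=1$ that, once the boundary conditions of \eqref{(P_feed)} are substituted, combine with the identity above to give
\[
\int_0^1 y_{tt}z\,dx+y_t(t,1)z(t,1)+y_{tx}(t,1)z_x(t,1)+y^2(t,1)+y^2_x(t,1)=0.
\]

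\medskip

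\noindent\emph{Step 3: integrate in time and estimate.} Integrating over $(s,T)$ and integrating $\int_s^T\!\int_0^1 y_{tt}z\,dx\,dt$ by parts in $t$ gives
\[
\int_s^T\!\!\bigl(y^2(t,1)+y_x^2(t,1)\bigr)dt=-\Bigl[\!\int_0^1\! y_t z\,dx\Bigr]_{s}^{T}+\!\int_{Q_s}\!\! y_t z_t\,dx\,dt-\!\int_s^T\!\!\!y_t(t,1)z(t,1)dt-\!\int_s^T\!\!\!y_{tx}(t,1)z_x(t,1)dt.
\]
Now bound each term. For the boundary-in-time bracket, use $\|y_t\|^2\le 2E_y$ and the bound $\|z\|^2_{L^2}\le 2C_1C_2\max\{1,C_1\}(y^2(t,1)+y_x^2(t,1))\le 4C_1C_2\max\{1,C_1\}(1/\beta+1/\gamma)E_y(s)$ together with monotonicity of $E_y$. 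For the double integral $\int_{Q_s}y_tz_t$ apply Young's inequality with weight $\delta$: the part with $y_t^2$ produces $2\delta\int_s^TE_y(t)\,dt$, while the part with $z_t^2$ is controlled by
\[
\int_s^T\|z_t(t,\cdot)\|^2_{L^2}\,dt\le 2C_1C_2\max\{1,C_1\}\!\!\int_s^T\!\!\bigl(y_t^2(t,1)+y_{tx}^2(t,1)\bigr)dt\le 2C_1C_2\max\{1,C_1\}E_y(s)
\]
thanks to Theorem \ref{teorema energia decr}. For the two remaining boundary integrals, apply Young's inequality again with weight $\delta$, using $z^2(t,1)\le|||z|||^2/\beta$ and $z_x^2(t,1)\le|||z|||^2/\gamma$ to transfer the $\delta$-term onto $\int_s^T(y^2(t,1)+y_x^2(t,1))\,dt$, while the $1/\delta$-term is $\le E_y(s)/(4\delta)$ each, after invoking $\int_s^Ty_t^2(t,1)dt,\int_s^Ty_{tx}^2(t,1)dt\le E_y(s)$.

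\medskip

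\noindent\emph{Step 4: absorption.} The above estimates show that $\int_s^T(y^2+y_x^2)(t,1)\,dt$ appears on the right-hand side with total coefficient $2C_2\max\{1,C_1\}(1/\beta+1/\gamma)\delta$; moving this to the left yields the factor $C_\delta=1-2C_2\max\{1,C_1\}(1/\beta+1/\gamma)\delta$, which is strictly positive precisely for $\delta\in(0,\nu)$. Dividing through by $C_\delta$ produces the $2\delta/C_\delta$ coefficient in front of $\int_s^TE_y(t)dt$ and the explicit $E_y(s)$ coefficient displayed in the statement. The main obstacle is not conceptual but bookkeeping: matching exactly the constants $4C_1^2C_2^2/\beta$, $4C_1C_2^2/\gamma$, $1/\delta$ and $2C_1C_2^2\max\{1,C_1\}/\delta$ requires careful balancing of the Young's inequality weights (in particular, taking weight $\delta$ for the time-integrated cross terms but a pure $1/2$ weight for the boundary-in-time evaluations) and care in using the estimates of Proposition \ref{prob variazionale} rather than looser forms of them.
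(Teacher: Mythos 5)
Your proposal follows essentially the same route as the paper's proof: construct the auxiliary elliptic solution $z(t,\cdot)$ via Proposition \ref{prob variazionale} with data $(y(t,1),y_x(t,1))$, derive the identity expressing $\int_s^T(y^2+y_x^2)(t,1)\,dt$ in terms of $\int_0^1[y_tz]_s^T$, $\int_{Q_s}y_tz_t$ and the two boundary cross terms, estimate each with Young's inequality and the energy decay of Theorem \ref{teorema energia decr}, and absorb the $\delta$-weighted boundary contribution to produce $C_\delta$ and the restriction $\delta<\nu$. The only blemishes are bookkeeping slips you already flag (e.g.\ writing $2C_1C_2\max\{1,C_1\}$ instead of $2C_1C_2^2\max\{1,C_1\}$ in the $L^2$ bounds for $z$ and $z_t$), which do not affect the argument.
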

\begin{proof}
Set $\lambda =y(t,1)$, $\mu =y_x(t,1)$, where $t \in [s,T]$, and let $z=z(t,\cdot)\in K^2_{a,0}(0,1)$ be the unique solution of
\begin{equation*}
	\int_0^1az_{xx}\varphi''dx+\beta z(t,1)\varphi(1)+\gamma z_x(t,1)\varphi'(1)=\lambda\varphi(1)+\mu\varphi'(1),\,\,\,\,\,\,\,\quad\forall\;\varphi\in K^2_{a,0}(0,1).
\end{equation*}
By Proposition \ref{prob variazionale}, $z(t,\cdot)\in D(B)$ for all $t$ and solves
 \begin{equation}\label{fogliaferrea}
 	\begin{cases}
				Bz=0, \\
				\beta z(t,1)-(az_{xx})_x(t,1)=\lambda,\\
				\gamma z_x(t,1)+(az_{xx})(t,1)=\mu.
			\end{cases}
 \end{equation}
Now, multiplying the equation in (\ref{(P_feed)}) by $\displaystyle z$ and integrating over $Q_s$, we have
\begin{equation}\label{acquecrespe}
\begin{aligned}
	\int_0^1\Bigl [y_tz\Bigr ]^{t=T}_{t=s}dx-\int_{Q_s}y_tz_tdx\,dt&=-\int_s^Tz(t,1)(ay_{xx})_x(t,1)dt+\int_s^Tz_x(t,1)(ay_{xx})(t,1)dt\\
		&-\int_{Q_s}az_{xx}y_{xx}dx\,dt.
\end{aligned}
\end{equation}
On the other hand, multiplying the equation in (\ref{fogliaferrea}) by $y$ and integrating over $Q_s$, we have
$
	\int_{Q_s}(az_{xx})_{xx}y\,dx\,dt=0.
$
Thus, by (\ref{GGnew}), we get
\begin{equation}\label{acquecrespenew}
\int_{Q_s}az_{xx}y_{xx}dx\,dt=-\int_s^T(az_{xx})_x(t,1)y(t,1)dt+\int_s^Ty_x(t,1)(az_{xx})(t,1)dt.
\end{equation}
Substituting \eqref{acquecrespenew} in (\ref{acquecrespe}), using the fact that $(az_{xx})_x(t,1)=\beta z(t,1)-\lambda$, $(az_{xx})(t,1)=-\gamma z_x(t,1)+\mu$, $\lambda =y(t,1)$ and $\mu=y_x(t,1)$, we have
\begin{equation*}
\begin{aligned}
&		\int_0^1\Bigl [y_tz \Bigr ]^{t=T}_{t=s}dx-\int_{Q_s}y_tz_tdx\,dt=-\int_s^Tz(t,1)(ay_{xx})_x(t,1)dt+\int_s^Tz_x(t,1)(ay_{xx})(t,1)dt\\
		&+\int_s^T(az_{xx})_x(t,1)y(t,1)dt-\int_s^Ty_x(t,1)(az_{xx})(t,1)dt\\
		&=-\int_s^Tz(t,1)(ay_{xx})_x(t,1)dt+\int_s^Tz_x(t,1)(ay_{xx})(t,1)dt+\int_s^Ty(t,1)[\beta z(t,1)-\lambda ]dt\\
		&-\int_s^Ty_x(t,1)[-\gamma z_x(t,1)+\mu ]dt\\
		&=\int_s^Tz(t,1)[\beta y(t,1)-(ay_{xx})_x(t,1)]dt\\ &+\int_s^Tz_x(t,1)[(ay_{xx})(t,1)+\gamma y_x(t,1)]dt-\int_s^Ty^2(t,1)dt-\int_s^Ty_x^2(t,1)dt.
\end{aligned}
\end{equation*}
Then 
\begin{equation}\label{solcoferreo}
\begin{aligned}
	\int_s^Ty^2(t,1)dt+\int_s^Ty_x^2(t,1)dt&=-\int_s^T(y_tz)(t,1)dt-\int_s^T(z_xy_{tx})(t,1)dt\\ &-\int_0^1 [y_tz ]^{t=T}_{t=s}dx+\int_{Q_s}y_tz_tdx\,dt.
\end{aligned}
\end{equation}
Now, we have to estimate the four terms in the previous equality.  
As a first step observe that by \eqref{eroeferreo} and the definition of $C_{a,K, \lambda, \mu}$
\begin{equation}\label{Stimaz}
\begin{aligned}
\int_0^1z^2(t,x) dx&\le C_1C_2 C_{a,K, \lambda, \mu}^2\\
&\le 2C_1^2C_2^2y^2(t,1)+2C_1C_2^2y_x^2(t,1),
\end{aligned}
\end{equation}
since $\lambda =y(t,1)$ and $\mu=y_x(t,1)$. Thus, for all $t\in [s,T]$,
\begin{equation*}
	\begin{aligned}
		\int_0^1|y_tz |(t,x)dx&\le  \frac{1}{2}\int_0^1y^2_t(t,x)dx+\frac{1}{2}\int_0^1z^2(t,x) dx\\
		&\le E_y(t)+C_1^2C_2^2y^2(t,1)+C_1C_2^2y_x^2(t,1)\\
		& \le E_y(t) +2\frac{C_1^2C_2^2}{\beta} E_y(t) +2 \frac{C_1C_2^2}{\gamma} E_y(t)\\
& \le \left(1+2\frac{C_1^2C_2^2}{\beta}   +2\frac{C_1C_2^2}{\gamma} \right)E_y(t).
	\end{aligned}
\end{equation*}
By Theorem \ref{teorema energia decr}, 
\begin{equation}\label{latios}
\int_0^1 [y_tz]^{t=T}_{t=s} dx\le 2\left(1+  2\frac{C_1^2C_2^2}{\beta}   + 2\frac{C_1C_2^2}{\gamma} \right)E_y(s).
\end{equation}
Moreover, for any $\delta >0$ we have
\begin{equation}\label{colloferreo}
	\int_s^T|(y_tz)(t,1)|dt\le \frac{1}{\delta}\int_s^Ty^2_t(t,1)dt+\delta\int_s^Tz^2(t,1)dt.
\end{equation}
By definition of $|||\cdot|||$,  one has

\begin{equation}\label{stimaz1}
\begin{aligned}
	z^2(t,1)&\le \frac{1}{\beta}|||z|||^2\le \frac{1}{\beta}C_{a,K, \lambda, \mu}^2 \le  2\frac{C_1C_2}{\beta}y^2(t,1)+  2\frac{C_2}{\beta}y_x^2(t,1)\\
	&\le  2\frac{C_2\max\{1,C_1\}}{\beta}\left(y^2(t,1)+y_x^2(t,1) \right).
	\end{aligned}
\end{equation}
Thus, by (\ref{colloferreo}), we have
\begin{equation}\label{latias}
		\int_s^T|(y_tz)(t,1)|dt\le \ds \frac{1}{\delta}\int_s^Ty^2_t(t,1)dt+2\frac{C_2\max\{1,C_1\}}{\beta}\delta\int_s^T(y^2+y_x^2)(t,1)dt.
\end{equation}
In a similar way,  it is possible to find the next estimate
\begin{equation}\label{LATIOS}
	\int_s^T|(z_xy_{tx})(t,1)|dt\le  \ds \frac{1}{\delta}\int_s^Ty^2_{tx}(t,1)dt+ 2\frac{C_2\max\{1,C_1\}}{\gamma}\delta\int_s^T(y^2+y_x^2)(t,1)dt,
\end{equation}
being
\begin{equation}\label{stimaz2}
	z_x^2(t,1)\le \frac{1}{\gamma}|||z|||^2\le  \frac{1}{\gamma}C_{a,K, \lambda, \mu}^2.
\end{equation}
Therefore, summing \eqref{latias} and \eqref{LATIOS} and applying Theorem \ref{teorema energia decr} we obtain
\begin{equation}\label{somma}
\begin{aligned}
&\int_s^T|(y_tz)(t,1)|dt+	\int_s^T|(z_xy_{tx})(t,1)|dt \\
&\le \frac{1}{\delta}\int_s^T -\frac{d}{dt}E_y(t)dt+2\max\{1,C_1\}C_2\delta\Biggl (\frac{1}{\beta}+\frac{1}{\gamma}\Biggr )\int_s^T(y^2+y_x^2)(t,1)dt\\
&\le  \frac{E_y(s)}{\delta}+2\max\{1,C_1\}C_2\delta\Biggl (\frac{1}{\beta}+\frac{1}{\gamma}\Biggr )\int_s^T(y^2+y_x^2)(t,1)dt.
\end{aligned}
\end{equation}
Finally, we estimate the last integral in (\ref{solcoferreo}), i.e. $\ds\int_{Q_s}|y_tz_t|dx\,dt$. To this aim, consider again problem (\ref{falenaferrea}) and differentiate with respect to $t$. Thus
\begin{equation*}
	\begin{cases}
		(a(z_t)_{xx})_{xx}=0, \\
		\beta z_t(t,1)-(a(z_t)_{xx})_x(t,1)=y_t(t,1), \\
		\gamma (z_t)_x(t,1)+(a(z_t)_{xx})(t,1)=(y_x)_t(t,1).
	\end{cases}
\end{equation*}
Clearly, $z_t$ satisfies \eqref{eroeferreo} and, proceeding as in \eqref{Stimaz}, we have
\[
			\norm{z_t}^2_{L^2(0, 1)}\le 2C_1^2C_2^2y_t^2(t,1)+2C_1C_2^2y_{tx}^2(t,1)\le 2C_1C_2^2 \max\{1, C_1\} (y_t^2(t,1)+y_{tx}^2(t,1)).
	\]
Thus, for $\delta >0$ we find
\begin{equation}\label{keldeo}
	\begin{aligned}
		\int_{Q_s}|y_tz_t |dx\,dt&\le \delta\int_{Q_s}y^2_tdx\,dt+\frac{1}{\delta}\int_{Q_s}z^2_tdx\,dt \\
		&\le 2\delta\int_s^TE_y(t)dt+\frac{2C_1C_2^2 \max\{1, C_1\} }{\delta}\int_s^T(y^2_t(t,1)+ y^2_{tx}(t,1))dt\\
		&=2\delta\int_s^TE_y(t)dt+\frac{2C_1C_2^2 \max\{1, C_1\} }{\delta}\int_s^T-\frac{d}{dt}E_y(t)dt\\
		&\le 2\delta\int_s^TE_y(t)dt+\frac{2C_1C_2^2 \max\{1, C_1\} }{\delta}E_y(s).
	\end{aligned}
\end{equation}
Coming back to (\ref{solcoferreo}) and using (\ref{latios}), (\ref{somma}) and (\ref{keldeo}), we obtain the thesis for every  $\delta\in (0, \nu)$.
\end{proof}

As a consequence of Propositions \ref{Prop 3.4} and \ref{Prop 3.3}, we can formulate the main result of this section, whose proof is based on  \cite[Theorem 8.1]{Ko}.
\begin{Theorem}\label{teoremaprincipale}
	Assume $a$ (WD) or (SD) and let $y$ be a mild solution of (\ref{(P)}). Then, for all $t>0$ and for all $
	\ds	\delta \in \left(0,\min \left\{\nu, \frac{\varepsilon_0}{C_3}\right\}\right),$  one has
	\[
		E_y(t)\le E_y(0)e^{1-\frac{t}{M}},
	\]
where
$
M:=\ds\frac{C_4}{\varepsilon_0 - \delta C_3},
$ 

	\[C_3:=\Biggl (\frac{K}{4}+\frac{K\beta}{2}+\beta+\varepsilon_0\frac{\beta}{2}\Biggr )\frac{2}{C_\delta} + \left(\beta +1+\frac{2\gamma^2}{a(1)}+\varepsilon_0\frac{\gamma}{2}\right)\frac{2}{C_\delta}, \]

\[
	C_4:=\vartheta + \varrho+\varsigma+ \Biggl (2-\frac{K}{2}\Biggr )\frac{1}{\gamma} + C_5
	\]
	and
\[\begin{aligned}
C_5:=&\ds\frac{1}{C_\delta}\left( 2+4 \frac{C_1^2C_2^2}{\beta} + 4\frac{C_1C_2^2}{\gamma} + \frac{1}{\delta} + \frac{2C_1C_2^2\max\{1,C_1\}}{\delta}\right) \cdot\\
&\cdot\left(\frac{K}{4}+\frac{K\beta}{2}+2\beta+\varepsilon_0\frac{\beta}{2} +1+\frac{2\gamma^2}{a(1)}+\varepsilon_0\frac{\gamma}{2}\right).
\end{aligned}\]
\end{Theorem}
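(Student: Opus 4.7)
The plan is to reduce the decay statement to the classical dissipation lemma (\cite[Theorem 8.1]{Ko}): if a non-increasing, nonnegative function $E$ satisfies $\int_s^{+\infty} E(t)\,dt\le M\, E(s)$ for every $s\ge 0$, then $E(t)\le E(0)\,e^{1-t/M}$. The monotonicity hypothesis is granted by Theorem \ref{teorema energia decr}, so the whole task is to prove the integral inequality with the advertised $M=C_4/(\varepsilon_0-\delta C_3)$, and Propositions \ref{Prop 3.4} and \ref{Prop 3.3} are exactly the two ingredients one needs to combine.

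The bridge between the two propositions is the identity
\[
2\int_s^T E_y(t)\,dt=\int_{Q_s}\bigl(y_t^2+a\,y_{xx}^2\bigr)dx\,dt+\beta\int_s^T y^2(t,1)\,dt+\gamma\int_s^T y_x^2(t,1)\,dt.
\]
I multiply the inequality of Proposition \ref{Prop 3.4} by $2$ and add $\varepsilon_0\beta\int_s^T y^2(t,1)\,dt+\varepsilon_0\gamma\int_s^T y_x^2(t,1)\,dt$ to both sides; by the identity above, the left-hand side collapses to $2\varepsilon_0\int_s^T E_y(t)\,dt$, while the right-hand side becomes $2\bigl(\vartheta+\varrho+\varsigma+(2-K/2)/\gamma\bigr)E_y(s)+\alpha_1\int_s^T y^2(t,1)\,dt+\alpha_2\int_s^T y_x^2(t,1)\,dt$, where $\alpha_1:=2(K/4+K\beta/2+\beta)+\varepsilon_0\beta$ and $\alpha_2:=2(\beta+1+2\gamma^2/a(1))+\varepsilon_0\gamma$. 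Since $y^2(t,1),\,y_x^2(t,1)\ge 0$, one bounds $\alpha_1\int y^2+\alpha_2\int y_x^2\le(\alpha_1+\alpha_2)\int_s^T(y^2+y_x^2)(t,1)\,dt$, and Proposition \ref{Prop 3.3} applied to this sum (which requires $\delta\in(0,\nu)$), followed by division by $2$, gives
\[
\varepsilon_0\int_s^T E_y(t)\,dt\le C_4\, E_y(s)+\delta C_3\int_s^T E_y(t)\,dt.
\]
The constants match the statement exactly because $(\alpha_1+\alpha_2)/C_\delta=C_3$ and $\tfrac{1}{2C_\delta}(\alpha_1+\alpha_2)\bigl(2+4C_1^2C_2^2/\beta+4C_1C_2^2/\gamma+1/\delta+2C_1C_2^2\max\{1,C_1\}/\delta\bigr)=C_5$, with $C_4=\vartheta+\varrho+\varsigma+(2-K/2)/\gamma+C_5$.

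For $\delta<\varepsilon_0/C_3$, which together with $\delta<\nu$ gives the full hypothesis $\delta\in\bigl(0,\min\{\nu,\varepsilon_0/C_3\}\bigr)$, I rearrange and let $T\to+\infty$ by monotone convergence to obtain $\int_s^{+\infty} E_y(t)\,dt\le M\,E_y(s)$; Komornik's lemma then delivers the exponential bound for classical solutions, and the passage to mild solutions is by density as in \cite{CF_Beam}. The only genuine obstacle I foresee is constant bookkeeping: one has to verify that the particular linear combination $2\cdot(\text{Prop.~\ref{Prop 3.4}})+\varepsilon_0\cdot(\text{boundary identity})$ produces precisely the stated $C_3$ and $C_4$ rather than constants of merely the right qualitative form; the remaining analytical content is already packaged in the two supporting propositions and in the classical dissipation lemma.
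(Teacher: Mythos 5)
Your proposal is correct and follows essentially the same route as the paper: both express $\varepsilon_0\int_s^T E_y(t)\,dt$ via the definition of the energy, absorb the interior term with Proposition \ref{Prop 3.4}, control the two boundary integrals through Proposition \ref{Prop 3.3} (each bounded by the sum $\int_s^T(y^2+y_x^2)(t,1)\,dt$, exactly your $(\alpha_1+\alpha_2)$ bookkeeping), rearrange to get $(\varepsilon_0-\delta C_3)\int_s^T E_y(t)\,dt\le C_4E_y(s)$, and invoke \cite[Theorem 8.1]{Ko} together with a density argument for mild solutions. Your constants $\alpha_1,\alpha_2$ match the paper's $C_3$, $C_5$ identically, so only the cosmetic factor of $2$ and the reference used for the mild-solution passage differ.
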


	\begin{proof}
		As a first step, consider $y$ a classical solution of \eqref{(P)} and take $\delta>0$ such that
$
	\ds	\delta <\min \left\{\frac{\varepsilon_0}{C_3}, \nu\right\}.
$
	By definition of $E_y$ and Propositions \ref{Prop 3.4}, \ref{Prop 3.3}, we have
		\[
		\begin{aligned}
&		\varepsilon_0  \int_s^TE_y(t)dt=\frac{\varepsilon_0}{2}\int_{Q_s} \Biggl (y^2_t(t,x)+a(x)y^2_{xx}(t,x) \Biggr )dxdt\\
		&+\varepsilon_0\frac{\beta}{2}\int_s^Ty^2(t,1)dt+\varepsilon_0\frac{\gamma}{2}\int_s^Ty_x^2(t,1)dt\\
		&\le \left( \vartheta + \varrho+\varsigma+ \Biggl (2-\frac{K}{2}\Biggr )\frac{1}{\gamma}\right) E_y(s)+\Biggl (\frac{K}{4}+\frac{K\beta}{2}+\beta+\varepsilon_0\frac{\beta}{2}\Biggr )\int_s^Ty^2(t,1)dt\\
		&+\left(\beta +1+\frac{2\gamma^2}{a(1)}+\varepsilon_0\frac{\gamma}{2}\right)\int_s^Ty_x^2(t,1)dt\\
	&	\le  \left( \vartheta + \varrho+\varsigma+ \Biggl (2-\frac{K}{2}\Biggr )\frac{1}{\gamma}\right) E_y(s)+\Biggl (\frac{K}{4}+\frac{K\beta}{2}+\beta+\varepsilon_0\frac{\beta}{2}\Biggr )\frac{2}{C_\delta} \delta\int_s^TE_y(t)dt\\
	&+\Biggl (\frac{K}{4}+\frac{K\beta}{2}+\beta+\varepsilon_0\frac{\beta}{2}\Biggr )\frac{1}{C_\delta}\left(2+4 \frac{C_1^2C_2^2}{\beta} +  4\frac{C_1C_2^2}{\gamma} + \frac{1}{\delta} + \frac{2C_1C_2^2\max\{1,C_1\}}{\delta}\right)E_y(s)\\
	&+\left(\beta +1+\frac{2\gamma^2}{a(1)}+\varepsilon_0\frac{\gamma}{2}\right)\frac{2}{C_\delta} \delta\int_s^TE_y(t)dt\\
	&+ \left(\beta +1+\frac{2\gamma^2}{a(1)}+\varepsilon_0\frac{\gamma}{2}\right)\frac{1}{C_\delta}\left(2+4 \frac{C_1^2C_2^2}{\beta} +  4\frac{C_1C_2^2}{\gamma} + \frac{1}{\delta} + \frac{2C_1C_2^2\max\{1,C_1\}}{\delta}\right)E_y(s).
		\end{aligned}
	\]
This implies $
			\Biggl [\varepsilon_0-\delta C_3 \Biggr ]\int_s^TE_y(t)dt\le C_4E_y(s).
		$
Hence, we can apply \cite[Theorem 8.1]{Ko} with $M:= \frac{C_4}{\varepsilon_0-\delta C_3}$ and \eqref{Stabilità} holds.
	If $y$ is the mild solution of the problem, we can proceed as in \cite{Stability_Genni_Dimitri}, obtaining the thesis.
	\end{proof}
	
	\subsection{Stabilization if $\beta, \gamma \ge0$ in the weakly degenerate case}
	Through this subsection we will assume that $a$ is (WD). In this situation, we know that if $y \in K^2_{a,0}(0,1)$ then $y'(0)=0$; thus, using this property we can prove the previous results if $\beta=0$ and/or $\gamma=0$. Moreover, the same results are improved when  $\beta$ and $\gamma$ are strictly positive. Indeed
\begin{equation}\label{stimawd}
|y'(x)|= \left|\int_0^x y''(t)dt\right| \le \int_0^1\frac{\sqrt{a(t)}|y''(t)|	}{\sqrt{a(t)}}dt\le \left\| \frac{1}{a}\right\|^{\frac{1}{2}}_{L^1(0,1)} \|\sqrt{a}y''\|_{L^2(0,1)}
\end{equation}
for all $x \in (0,1]$.
In particular,
\[
|y'(1)|^2 \le  \left\| \frac{1}{a}\right\|_{L^1(0,1)} \|\sqrt{a}y''\|_{L^2(0,1)}^2.
\]
Hence, for all $y \in K^2_{a,0}(0,1)$,
\[
\begin{aligned}
\|y\|^2_{2,\circ}&= |y'(1)|^2 +  \|\sqrt{a} y''\|_{L^2(0,1)}^2\le \left(1+   \left\| \frac{1}{a}\right\|_{L^1(0,1)}\right) \|\sqrt{a} y''\|_{L^2(0,1)}^2\\
&\le  \left(1+   \left\| \frac{1}{a}\right\|_{L^1(0,1)}\right) |||y|||^2.
\end{aligned}
\]
Observe that the previous inequality holds also if $\beta$ and $\gamma$ are non negative.

Hence, if $a$ is (WD), Proposition \ref{prob variazionale} still holds with the following estimates
		\begin{equation}\label{eroeferreonew}
			\norm{y}^2_{L^2(0, 1)}\le2\max\left\{1, \frac{1}{a(1)(2-K)}\right\}A_\gamma C_{a,K, \lambda, \mu}^2\,\,\,\,\,\,\text{ and }\,\,\,\,\,\,	|||y|||^2\le C^2_{a,K, \lambda, \mu},
		\end{equation}
		where 
		\[
		A_\gamma:= \begin{cases}\ds\min \left\{ \max\left\{ 1, \frac{1}{\gamma}\right\},  \left(1+   \left\| \frac{1}{a}\right\|_{L^1(0,1)}\right)\right\}, & \gamma \neq 0,\\
	\ds	1+   \left\| \frac{1}{a}\right\|_{L^1(0,1)}, &\gamma =0,
		\end{cases}
		\]
		and
		$C_{a,K, \lambda, \mu}:= \sqrt{A_\gamma}\left(|\lambda|\sqrt{C_1} +|\mu|\right)$. Observe that if $\gamma \ge 1$, then $A_\gamma =1$.

Moreover, observe that if $y$ is a mild solution of \eqref{(P_feed)}, then, by \eqref{star}, \eqref{stimawd}, the fact that $y_x(t,0)=0$ and the definition of $|||\cdot|||$, we also have that
\[
y^2(t,1) \le  \left\| \frac{1}{a}\right\|_{L^1(0,1)} \|\sqrt{a} y''\|_{L^2(0,1)}^2\le 2  \left\| \frac{1}{a}\right\|_{L^1(0,1)}E_y(t),
\]
\[
y_x^2(t,1)  \le  \left\| \frac{1}{a}\right\|_{L^1(0,1)} \|\sqrt{a} y''\|_{L^2(0,1)}^2\le 2  \left\| \frac{1}{a}\right\|_{L^1(0,1)}E_y(t),
\]
\[
y^2(t,1) \le \left\| \frac{1}{a}\right\|_{L^1(0,1)} \|\sqrt{a} y''\|_{L^2(0,1)}^2\le  \left\| \frac{1}{a}\right\|_{L^1(0,1)} |||y|||^2
\]
and
\[
y_x^2(t,1) \le   \left\| \frac{1}{a}\right\|_{L^1(0,1)} \|\sqrt{a} y''\|_{L^2(0,1)}^2\le  \left\| \frac{1}{a}\right\|_{L^1(0,1)} |||y|||^2
\]
for all $t \ge0$. Clearly, these estimates hold also if $\beta$ and $\gamma$ are strictly positive provided $a$ (WD). Thus
\begin{equation}\label{stimamigliore1}
y^2(t,1) \le C_\beta E_y(t),
\end{equation}
\begin{equation}\label{stimamigliore2}
y_x^2(t,1) \le C_\gamma E_y(t),
\end{equation}
\begin{equation}\label{stimamigliore3}
y^2(t,1) \le \frac{1}{2}C_{\beta} |||y|||^2
\end{equation}
and
\begin{equation}\label{stimamigliore4}
y_x^2(t,1) \le \frac{1}{2}C_{\gamma} |||y|||^2,
\end{equation}
where
\[
C_\beta := \begin{cases} \ds2\min\left \{   \left\| \frac{1}{a}\right\|_{L^1(0,1)}, \frac{1}{\beta} \right\}, & \beta \neq 0,\\
 2  \left\| \frac{1}{a}\right\|_{L^1(0,1)}, &\beta=0\end{cases} \text{ and } C_\gamma:=  \begin{cases}\ds2\min\left\{   \left\| \frac{1}{a}\right\|_{L^1(0,1)},  \frac{1}{\gamma}\right\}, & \gamma \neq 0,\\
 2  \left\| \frac{1}{a}\right\|_{L^1(0,1)}, &\gamma=0. \end{cases}
\]
Clearly \eqref{eroeferreonew},  \eqref{stimamigliore1},\eqref{stimamigliore2}, \eqref{stimamigliore3} and \eqref{stimamigliore4} improve \eqref{eroeferreo} and \eqref{stimapun}, \eqref{stimapun0}, \eqref{stimaz1}, \eqref{stimaz2}, respectively, in the case $\beta, \gamma >0$.

Evidently, under the assumptions of this subsection Theorem \ref{teorema energia decr} and Propositions \ref{Prop 3.1}, \ref{Prop 4.4} still hold. On the other hand, Propositions \ref{Prop 3.4} and \ref{Prop 3.3} rewrite as
\begin{Proposition}\label{Prop 3.4new}
		If $y$ is a classical solution of (\ref{(P_feed)}), then there exists $\varepsilon_0>0$ such that for any $0<s<T$ 
			\[
			\begin{aligned}
				\frac{\varepsilon_0}{2}\int_{Q_s}\Biggl (y^2_t+ay^2_{xx} \Biggr )dx\,dt&\le\left( \vartheta + \varrho+\varsigma+ \Biggl (2-\frac{K}{2}\Biggr )\frac{C_\gamma}{2}\right) E_y(s)\\
	&+ \left(\frac{K}{4}+\frac{K\beta}{2}+ \beta\right) \int_s^Ty^2(t,1)dt+ \left(\beta+ 1+ \frac{2\gamma^2}{a(1)}\right)\int_s^Ty_x^2(t,1)dt,
			\end{aligned}
		\]
		where 
		\[\ds\vartheta:=   K\max\Biggl \{1,\frac{2}{a(1)(2-K)}, C_\gamma\Biggr \}, \; \ds\varrho:= 4 \max\left\{ C_\gamma, \frac{2}{a(1)(2-K)}, 1\right\}\]
		and 
		$\ds\varsigma$ is the one defined in \eqref{varsigma}.
	\end{Proposition}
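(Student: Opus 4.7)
The plan is to repeat the proof of Proposition \ref{Prop 3.4} verbatim, but replace each occurrence of the bound $y_x^2(t,1)\le \frac{2}{\gamma} E_y(t)$ (which required $\gamma>0$) by the improved bound $y_x^2(t,1)\le C_\gamma E_y(t)$, valid thanks to \eqref{stimamigliore2}, which holds for all $\beta,\gamma\ge 0$ under the weakly degenerate assumption (because $y_x(t,0)=0$ in this case). The identity \eqref{equazione 2} from Proposition \ref{Prop 4.4} is available and does not depend on $\beta,\gamma$ being positive, so I would start from there.

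First, exactly as in the proof of Proposition \ref{Prop 3.4}, the hypothesis $K<2$ yields some $\varepsilon_0>0$ such that $2-K\ge \varepsilon_0$, so that the left-hand side of \eqref{equazione 2} is bounded from below by $\frac{\varepsilon_0}{2}\int_{Q_s}(y_t^2+ay_{xx}^2)\,dx\,dt$; this step is unchanged. It remains to estimate the boundary terms from above, which is where the dependence on $\gamma$ enters and where the argument must be adapted.

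Next, for the mixed terms, I would use Proposition \ref{normeequivalenti} combined with \eqref{stimamigliore2} in place of \eqref{stimapun0}. Concretely, in the estimate
\[
\Bigl|\int_0^1 y(\tau,x)y_t(\tau,x)\,dx\Bigr|\le \tfrac12\!\int_0^1\! y_t^2\,dx+y_x^2(\tau,1)+\tfrac{1}{a(1)(2-K)}\!\int_0^1\! a y_{xx}^2\,dx,
\]
the term $y_x^2(\tau,1)$ is now bounded by $C_\gamma E_y(\tau)$, producing a factor $\max\{1,\frac{2}{a(1)(2-K)},C_\gamma\}$ in place of $\max\{1,\frac{2}{a(1)(2-K)},\frac{2}{\gamma}\}$; applying Theorem \ref{teorema energia decr} as before yields the new $\vartheta$. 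The same substitution on the $\int_0^1 x y_x(\tau,x)y_t(\tau,x)\,dx$ estimate yields the new $\varrho$. For the bracket in \eqref{primo BT}, the bound $\frac{1}{2}y_x^2(s,1)\le \frac{C_\gamma}{2}E_y(s)$ replaces $\frac{1}{\gamma}E_y(s)$, producing the term $\bigl(2-\frac{K}{2}\bigr)\frac{C_\gamma}{2}E_y(s)$.

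All remaining terms in \eqref{equazione 2}---the pure boundary integrals in \eqref{secondo BT}, \eqref{terzo BT}, \eqref{quarto BT} and the integral of $a(1)y_{xx}^2(t,1)$ treated via \eqref{quinto BT} using the feedback $\gamma y_x(t,1)+a(1)y_{xx}(t,1)+y_{tx}(t,1)=0$---carry through without modification, and the resulting integrals of $y_t^2(t,1)$ and $y_{tx}^2(t,1)$ on $(s,T)$ are absorbed via Theorem \ref{teorema energia decr} into the constant $\varsigma$ defined in \eqref{varsigma}, producing the extra $\varsigma E_y(s)$. Collecting, one obtains the stated inequality. There is no real obstacle beyond bookkeeping: the only point that actually uses the (WD) hypothesis is the validity of \eqref{stimamigliore2}, which in turn comes from $y_x(t,0)=0$ and \eqref{stimawd}, and this is precisely what allows the constant $C_\gamma$ to remain finite when $\gamma=0$. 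Finally, if $y$ is only a mild solution, the extension is carried out by density as in \cite{CF_Beam}.
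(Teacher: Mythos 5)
Your proposal is correct and is precisely the paper's intended argument: the paper explicitly omits this proof, stating only that it is ``similar to the previous ones,'' i.e.\ one reruns the proof of Proposition \ref{Prop 3.4} with \eqref{stimamigliore2} in place of \eqref{stimapun0} (and the identity \eqref{equazione 2}, which indeed does not require $\beta,\gamma>0$), which is exactly what you do. The only caveat, inherited from the paper's own statement of the constants rather than introduced by you, is that when $\gamma=0$ the bound $y_x^2(\tau,1)\le C_\gamma E_y(\tau)$ draws on the same energy component $\frac{1}{2}\int_0^1 a y_{xx}^2\,dx$ as the term $\frac{1}{a(1)(2-K)}\int_0^1 a y_{xx}^2\,dx$, so the step that distributes a maximum over the distinct summands of $E_y$ strictly produces a sum rather than a maximum inside $\vartheta$ and $\varrho$; this only enlarges the constants and does not affect the validity of the estimate or of the ensuing stability result.
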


\begin{Proposition}\label{Prop 3.3new}
	If $y$ is a classical solution of (\ref{(P_feed)}), then for every $0<s<T$ and for every  $\delta \in (0, \nu)$
	 we have
		\[
		\begin{aligned}
			\int_s^T y^2(t,1)dt+\int_s^Ty_x^2(t,1)dt&\le\frac{2 \delta}{C_\delta}\int_s^TE_y(t)dt\\
			&+\frac{1}{C_\delta}\left( 2+2C_1^2 A_\gamma^2C_\beta + 2C_1A_\gamma^2 C_\gamma+ \frac{1}{\delta} + 2\frac{C_1A_\gamma^2\max\{1,C_1\}}{\delta}\right)E_y(s),
		\end{aligned}
	\]
	where
	\[
	\nu:= \frac{1}{\max\{ 1,C_1\} (C_{\beta}+ C_{\gamma})A_\gamma},
	\]
$C_1$ is as in \eqref{C1}
	and
		\[
	C_\delta:= 1- \delta\max\{1, C_1\} (C_{\beta}+ C_{\gamma})A_\gamma.
	\]
\end{Proposition}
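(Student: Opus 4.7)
The strategy mirrors that of Proposition \ref{Prop 3.3}, but systematically replaces the constant $C_2=\max\{1,1/\gamma\}$ (which blows up as $\gamma\to 0$) by the (WD)-friendly constant $A_\gamma$ furnished by \eqref{eroeferreonew}, and replaces the boundary trace bounds $y^2(t,1)\le 2E_y(t)/\beta$, $y_x^2(t,1)\le 2E_y(t)/\gamma$ by the improved inequalities \eqref{stimamigliore1}--\eqref{stimamigliore4}, which hold even when $\beta=0$ and/or $\gamma=0$ since $y\in K^2_{a,0}(0,1)$ enjoys $y_x(t,0)=0$.

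First, fix $0<s<T$, set $\lambda=y(t,1)$, $\mu=y_x(t,1)$, and apply Proposition \ref{prob variazionale} with the sharper estimates \eqref{eroeferreonew} valid in the weakly degenerate case to obtain $z(t,\cdot)\in D(B)$ solving \eqref{fogliaferrea} and satisfying $\|z(t,\cdot)\|^2_{L^2(0,1)}\le C_1 A_\gamma C^2_{a,K,\lambda,\mu}$ and $|||z(t,\cdot)|||^2\le C^2_{a,K,\lambda,\mu}$. Multiplying the equation in (\ref{(P_feed)}) by $z$, multiplying the equation in (\ref{fogliaferrea}) by $y$, integrating over $Q_s$, and combining via \eqref{GGnew} yields \emph{exactly} the identity \eqref{solcoferreo}, since that manipulation is independent of the sign/positivity of $\beta,\gamma$.

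Next, estimate the four terms on the right-hand side of \eqref{solcoferreo}. For the volume term $\int_0^1[y_tz]_{t=s}^{t=T}dx$, use $\int_0^1 z^2(t,x)dx\le 2C_1^2 A_\gamma^2 y^2(t,1)+2C_1 A_\gamma^2 y_x^2(t,1)$ (from \eqref{eroeferreonew}) together with \eqref{stimamigliore1}--\eqref{stimamigliore2} to bound it by $2(1+C_1^2 A_\gamma^2 C_\beta+C_1 A_\gamma^2 C_\gamma)E_y(s)$ via Theorem \ref{teorema energia decr}. For the two boundary trace terms $\int_s^T (y_tz)(t,1)dt$ and $\int_s^T(z_xy_{tx})(t,1)dt$, apply Young's inequality with weight $\delta$, then use the improved bounds $z^2(t,1)\le \tfrac12 C_\beta |||z|||^2$ and $z_x^2(t,1)\le \tfrac12 C_\gamma |||z|||^2$ from \eqref{stimamigliore3}--\eqref{stimamigliore4} combined with $|||z|||^2\le 2A_\gamma \max\{1,C_1\}(y^2(t,1)+y_x^2(t,1))$; summing the two bounds and dissipating the $y_t^2(t,1)+y_{tx}^2(t,1)$ terms via the identity in Theorem \ref{teorema energia decr} gives $E_y(s)/\delta$ plus $\delta\max\{1,C_1\}A_\gamma(C_\beta+C_\gamma)\int_s^T(y^2+y_x^2)(t,1)dt$. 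For the last term $\int_{Q_s}y_tz_t\,dxdt$, differentiate \eqref{falenaferrea} in $t$, apply the same $L^2$ bound to $z_t$ in terms of $y_t(t,1)$ and $y_{tx}(t,1)$, and absorb using Young's inequality and energy decay as in \eqref{keldeo}.

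Collecting all four estimates into \eqref{solcoferreo} produces an inequality of the form
\[
\bigl(1-\delta\max\{1,C_1\}A_\gamma(C_\beta+C_\gamma)\bigr)\int_s^T(y^2+y_x^2)(t,1)dt\le 2\delta\int_s^TE_y(t)dt+(\cdots)E_y(s).
\]
The restriction $\delta\in(0,\nu)$ precisely guarantees that the left coefficient equals $C_\delta>0$, after which division yields the claimed bound. The only delicate point is the bookkeeping of constants: one must track carefully how $A_\gamma$ replaces $C_2$ throughout (note $A_\gamma\le C_2$ when $\gamma>0$, so all estimates are indeed refinements), and how the factors of $2$ appearing in \eqref{stimaz1}--\eqref{stimaz2} become $C_\beta/2$ and $C_\gamma/2$ here, which is responsible for the halved bound on $\nu$.
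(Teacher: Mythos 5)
Your proposal is correct and follows essentially the same route as the paper, which omits this proof precisely because it is the proof of Proposition \ref{Prop 3.3} with $C_2$ replaced by $A_\gamma$ and the trace bounds \eqref{stimapun}, \eqref{stimapun0}, \eqref{stimaz1}, \eqref{stimaz2} replaced by \eqref{stimamigliore1}--\eqref{stimamigliore4}; your constant bookkeeping reproduces the stated $\nu$, $C_\delta$ and the coefficient of $E_y(s)$ exactly. (Only your closing remark about a ``halved bound on $\nu$'' is off: when $\beta,\gamma>0$ and $A_\gamma=C_2$ the new $\nu$ coincides with the old one, since $C_\beta+C_\gamma$ reduces to $2/\beta+2/\gamma$.)
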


We omit the proofs since they are similar to the previous ones and, thanks to the previous results, Theorem \ref{teoremaprincipale} still holds with different constants. In particular:
\begin{Theorem}\label{teoremaprincipalenew}
	Assume $a$ (WD) and let $y$ be a mild solution of (\ref{(P)}). Then, for all $t>0$ and for all $
	\ds	\delta \in \left(0,\min \left\{\nu, \frac{\varepsilon_0}{C_3}\right\}\right),$ one has
	\begin{equation}\label{Stabilità}
		E_y(t)\le E_y(0)e^{1-\frac{t}{M}},
	\end{equation}
where
$
M:=\ds\frac{C_4}{\varepsilon_0 - \delta C_3},
$ 

	\[C_3:=\Biggl (\frac{K}{4}+\frac{K\beta}{2}+\beta+\varepsilon_0\frac{\beta}{2}\Biggr )\frac{2}{C_\delta} + \left(\beta +1+\frac{2}{a(1)}\gamma^2+\varepsilon_0\frac{\gamma}{2}\right)\frac{2}{C_\delta}, \]

\[
	C_4:=\vartheta + \varrho+\varsigma+ \Biggl (2-\frac{K}{2}\Biggr )\frac{C_\gamma}{2} + C_5
	\]
	and
\[\begin{aligned}
C_5:=&\ds\frac{1}{C_\delta}\left( 2+2C_1^2 A_\gamma^2C_\beta + 2C_1A_\gamma^2 C_\gamma+ \frac{1}{\delta} + 2\frac{C_1A_\gamma^2\max\{1,C_1\}}{\delta}\right) \cdot\\
&\cdot\left(\frac{K}{4}+\frac{K\beta}{2}+2\beta+\varepsilon_0\frac{\beta}{2} +1+\frac{2\gamma^2}{a(1)}+\varepsilon_0\frac{\gamma}{2}\right).
\end{aligned}\]
Here  $\vartheta, \varrho$, $\varsigma$ and $\nu, C_\delta, C_1$ are considered in Propositions \ref{Prop 3.4new} and \ref{Prop 3.3new}, respectively.
\end{Theorem}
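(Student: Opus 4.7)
The plan is to mimic the proof of Theorem \ref{teoremaprincipale}, replacing the constants coming from Propositions \ref{Prop 3.4} and \ref{Prop 3.3} with the sharper ones from Propositions \ref{Prop 3.4new} and \ref{Prop 3.3new}, which now incorporate the bounds $C_\beta$, $C_\gamma$, $A_\gamma$ available in the weakly degenerate case (and which, crucially, remain finite when $\beta=0$ and/or $\gamma=0$ because in $K^2_{a,0}(0,1)$ one has $y'(0)=0$ and the inequality $|y'(x)|\le \|1/a\|_{L^1(0,1)}^{1/2}\|\sqrt a y''\|_{L^2(0,1)}$ holds).

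First I would take $y$ a classical solution of \eqref{(P_feed)} and fix $\delta\in(0,\min\{\nu,\varepsilon_0/C_3\})$, the range where both Proposition \ref{Prop 3.3new} (which requires $\delta<\nu$ so that $C_\delta>0$) and the absorption step below are valid. By definition of $E_y$,
\[
\varepsilon_0\int_s^T E_y(t)\,dt=\frac{\varepsilon_0}{2}\int_{Q_s}(y_t^2+ay_{xx}^2)\,dx\,dt+\varepsilon_0\frac{\beta}{2}\int_s^T y^2(t,1)\,dt+\varepsilon_0\frac{\gamma}{2}\int_s^T y_x^2(t,1)\,dt.
\]
Estimating the distributed part by Proposition \ref{Prop 3.4new} and collecting the three boundary integrals under a common coefficient, one obtains a bound of the form
\[
\varepsilon_0\int_s^T E_y(t)\,dt \le \Bigl(\vartheta+\varrho+\varsigma+(2-\tfrac{K}{2})\tfrac{C_\gamma}{2}\Bigr)E_y(s)+P\int_s^T\bigl(y^2(t,1)+y_x^2(t,1)\bigr)\,dt,
\]
where $P$ is the maximum of the two coefficients appearing in front of $\int_s^T y^2(t,1)\,dt$ and $\int_s^T y_x^2(t,1)\,dt$ in Proposition \ref{Prop 3.4new} augmented by the $\varepsilon_0 \beta/2$ and $\varepsilon_0\gamma/2$ terms above.

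Next I would apply Proposition \ref{Prop 3.3new} to the last integral, producing on the right-hand side a term $\delta C_3 \int_s^T E_y(t)\,dt$ plus a multiple of $E_y(s)$; the precise coefficient $C_3$ is exactly the one stated in the theorem. Since $\delta<\varepsilon_0/C_3$, the factor $\varepsilon_0-\delta C_3$ on the left-hand side is strictly positive, and one can absorb the $\delta\int_s^T E_y(t)\,dt$ term on the left. This yields the key integral bound
\[
(\varepsilon_0-\delta C_3)\int_s^T E_y(t)\,dt \le C_4\,E_y(s),
\]
with $C_4=\vartheta+\varrho+\varsigma+(2-K/2)C_\gamma/2+C_5$ as defined in the statement. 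Setting $M=C_4/(\varepsilon_0-\delta C_3)$ and letting $T\to+\infty$ (which is legitimate because $E_y$ is non-increasing by Theorem \ref{teorema energia decr}, so $\int_s^\infty E_y(t)\,dt<+\infty$), one gets $\int_s^{+\infty}E_y(t)\,dt\le M\,E_y(s)$ for every $s\ge 0$.

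This is precisely the hypothesis of Komornik's classical integral inequality (\cite[Theorem 8.1]{Ko}) applied to the non-increasing function $E_y$, and it yields the exponential decay $E_y(t)\le E_y(0)\,e^{1-t/M}$ for every $t\ge 0$. Finally, the extension from classical to mild solutions is handled by a density argument as in \cite{Stability_Genni_Dimitri}: choose a sequence of initial data in $D(\mathcal B)$ converging to $(y_0,y_1)\in\mathcal K_0$, note that the associated energies converge uniformly on $[0,+\infty)$ by the contraction property of the semigroup $(R(t))_{t\ge 0}$, and pass to the limit in \eqref{Stabilità}. The only real subtlety is the bookkeeping of constants: one must verify that with the improvements \eqref{eroeferreonew}, \eqref{stimamigliore1}--\eqref{stimamigliore4} the constants $C_3$, $C_4$, $C_5$ reduce to those in the statement and remain finite when $\beta=0$ or $\gamma=0$; everything else is a faithful transcription of the argument in Theorem \ref{teoremaprincipale}.
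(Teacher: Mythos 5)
Your proposal follows exactly the route the paper intends: it repeats the proof of Theorem \ref{teoremaprincipale} with the constants of Propositions \ref{Prop 3.4new} and \ref{Prop 3.3new}, absorbs the $\delta\int_s^T E_y$ term, applies Komornik's integral inequality, and passes to mild solutions by density, which is precisely what the authors mean when they omit the proof as ``similar to the previous ones.'' The only cosmetic discrepancy is that bounding the two boundary integrals by a common coefficient $P$ equal to their maximum would yield $C_3=2P/C_\delta$ rather than the stated sum; since this only makes $C_3$ (and hence $M$) smaller, the claimed estimate still follows, but to reproduce the constants verbatim you should, as in Theorem \ref{teoremaprincipale}, apply Proposition \ref{Prop 3.3new} separately to each of the two boundary integrals and add the resulting contributions.
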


	\section{Appendix}\label{appendice}
	\begin{proof}[Proof of Theorem \ref{th generazione}]
		According to \cite[Corollary 3.20]{nagel}, it is sufficient to prove that $ \mathcal A:D(\mathcal A)\to \mathcal H_0$ is dissipative and that $\mathcal I-\mathcal A$ is surjective, where
		\[
		\mathcal {I}:=\begin{pmatrix}
			Id & 0 \\
			0 & Id
		\end{pmatrix}.
		\]

		\underline{$ \mathcal A$ is dissipative:} take $(u,v) \in D(\mathcal A)$. Clearly, we can apply \eqref{GF0} obtaining
		\[
		\begin{aligned}
			\langle \mathcal A (u,v), (u,v) \rangle_{\mathcal H_0} &=\langle (v, -Au), (u,v) \rangle _{\mathcal H_0}\\
			&=\int_0^1 au''v''dx- \int_0^1 vAu\,dx =0;
		\end{aligned}
		\] 
		thus $\mathcal A$ is dissipative (see
		\cite[Chapter 2.3]{nagel}).
		
		\underline{$\mathcal I - \mathcal A$ is surjective:} 
		take  $(f,g) \in \mathcal H_0$. We have to prove that there exists $(u,v) \in D(\mathcal A)$ such that
		\begin{equation}\label{4.3'}
			( \mathcal I-\mathcal A)\begin{pmatrix} u\\
				v\end{pmatrix} = \begin{pmatrix}f\\
				g \end{pmatrix} \Longleftrightarrow  \begin{cases} v= u -f,\\
				Au + u= f+ g.\end{cases}
		\end{equation}
		To this purpose, introduce the bilinear form $L:\Ho \times \Ho\to \R$ given by
		\[
		L(u,z):=  \int_0^1 u z\, dx + \int_0^1au''z''dx 
		\]
		for all $u, z \in \Ho$.  Thanks  to the equivalence of the norms, the operator $L$ is coercive and continuous: in fact, for all $u,z \in \Ho$, we have
		\[
		|L(u,z)| \le  \|u\|_{ L^2(0,1) }\|z\|_ {L^2(0,1) } +\|\sqrt{a}u''\|_{L^2(0,1)}\|\sqrt{a}z''\|_ {L^2(0,1)}\le 2 \|u\|^2_2\|z\|^2_2.
		\]
			Now, define $F: \Ho \rightarrow \R$ as
		\[
		F(z)=\int_0^1(f+g) z\,  dx,
		\]
		for all $z \in \Ho $.
		Obviously, $F\in \left(\Ho \right)^*$, being $\left(\Ho \right)^*$ the dual space of $\Ho $ with respect to the pivot space $L^2(0,1)$. Hence, by the Lax-Milgram Theorem, there exists a unique solution $u \in \Ho $ of
		\[
		L(u,z)= F(z)  \mbox{ for all }z\in \Ho ,\]
	i.e.
		\begin{equation}\label{4.4'}
			\int_0^1 u z\, dx + \int_0^1 au''z''dx = \int_0^1(f+g) z\, dx 
		\end{equation}
		for all $z \in \Ho $.
		
		Now, take $v:= u-f$; then $v \in \Ho $.
		We will prove that $(u,v) \in D(\mathcal A)$ and solves \eqref{4.3'}. To begin with, \eqref{4.4'} holds for every $z \in \mathcal C_c^\infty(0,1).$ Thus we have
		\[
		\int_0^1 au''z''dx = \int_0^1(f+g-u) z\, dx 
		\]
		for every $z \in\mathcal  C_c^\infty(0,1).$ Hence $\ds (au'')''= (f+g- u)$ a.e. in $(0,1)$, i.e. $
		Au = f+g-u
		$ a.e. in $(0,1)$, and $Au \in L^2(0,1)$. Thus $au'' \in H^2(0,1)$ (by \cite[Lemma 2.1]{CF}), $u \in \mathcal Z(0,1)$ and $u'(0)=u'(1)=0$, if $a$ is (WD), or $u'(1)=0$ if $a$ is (SD) (recall that $u \in H^2_{a,0}(0,1)$). In the last case, coming back to \eqref{4.4'} and  applying the formula of integration by parts \eqref{GG}, we obtain
		\[
\int_0^1 (au'')''z\,dx - (au''z')(0)=  \int_0^1 au''v''dx= \int_0^1(f+g-u) z\, dx 
\]
for all  $z \in \Ho $. Since $\ds (au'')''= (f+g- u)$ a.e. in $(0,1)$, one has $ (au''z')(0)=0$ for all $z \in \Ho $; hence $(au'')(0)=0$. This implies that $(u,v) \in D(\mathcal A)$, $ u +A u =f+g$, and, recalling that $v = u - f$,  we have that $(u,v)$ solves \eqref{4.3'}.
	\end{proof}
	
	\begin{proof}[Proof of Theorem \ref{th generazionenew}]
		Since the proof is similar to the previous one, we sketch it.

	\underline{$ \mathcal B$ is dissipative:} take $(u,v) \in D(\mathcal B)$. Then $(u,v) \in D(B)\times K^2_{a,0}(0,1)$ and so \eqref{GF0new} holds. Hence, 
	\[
	\begin{aligned}
		\langle \mathcal B (u,v), (u,v) \rangle_{\mathcal K_0} &=\langle (v, -Bu), (u,v) \rangle _{\mathcal K_0}\\
		&=\int_0^1 au''v''dx- \int_0^1 vAudx+\beta u(1)v(1)+\gamma u'(1)v'(1)\\
		&=- [(au'')'v](1) +  [au''v'](1)+\beta u(1)v(1)+\gamma u'(1)v'(1)\\
		&=v(1)[\beta u(1)-(au'')'(1)]+v'(1)[\gamma u'(1)+(au'')(1)]=-v^2(1)-(v'(1))^2 \le 0,
	\end{aligned}
	\]
thus  $\mathcal B$ is dissipative.
	
	\underline{$\mathcal I - \mathcal B$ is surjective:} 
	take  $(f,g) \in \mathcal K_0=K^2_{a, 0}(0,1)\times L^2(0,1)$. We have to prove that there exists $(u,v) \in D(\mathcal B)$ such that
	\begin{equation}\label{B}
		( \mathcal I-\mathcal B)\begin{pmatrix} u\\
			v\end{pmatrix} = \begin{pmatrix}f\\
			g \end{pmatrix} \Longleftrightarrow  \begin{cases} v= u -f,\\
			Au + u= f+ g.\end{cases}
	\end{equation}
	Thus, define $G: K^2_{a,0}(0,1) \rightarrow \R$ as
	\[
	G(z)=\int_0^1(f+g) z\,dx+z(1)f(1)+z'(1)f'(1),
	\]
	for all $z \in K^2_{a,0}(0,1)$.
	Obviously, $G\in \left(K^{2}_{a,0}(0,1)\right)^*$, being $\left(K^{2}_{a,0}(0,1)\right)^*$ the dual space of $K^2_{a,0}(0,1)$ with respect to the pivot space $L^2(0,1)$. Now, introduce the bilinear form $\mathcal{P}:K^{2}_{a,0}(0,1)\times K^{2}_{a,0}(0,1)\to \R$ given by
	\[
	\mathcal{P}(u,z):= \int_0^1au''z''dx +\int_0^1 u z \, dx+(\beta +1)u(1)z(1)+(\gamma +1)u'(1)z'(1)
	\]
	for all $u, z \in K^{2}_{a,0}(0,1)$. Clearly, since $\beta$ and $\gamma$ are non negative constants, thanks to the equivalence of the norms given in Proposition \ref{normeequivalenti}, $\mathcal{P}(u,z)$ is coercive. Furthermore, $\mathcal{P}(u,z)$ is continuous: indeed,  by \eqref{star} and \eqref{stimau'(1)}, we have
	\[
	\begin{aligned}
	|\mathcal{P}(u,z)| &\le  \|\sqrt{a}u''\|_{L^2(0,1)}\|\sqrt{a}z''\|_ {L^2(0,1)}+\|u\|_{ L^2 (0,1) }\|z\|_ {L^2 (0,1) }\\
	&+(\beta +1)|u(1)||z(1)|+(\gamma +1)|u'(1)||z'(1)|\\
	&\le \|\sqrt{a}u''\|_{L^2(0,1)}\|\sqrt{a}z''\|_ {L^2(0,1)}+\|u\|_{ L^2 (0,1) }\|z\|_ {L^2 (0,1) }+(\beta +1)\|u'\|_{L^2(0,1)}\|z'\|_{L^2(0,1)}\\
	&+ 4(\gamma+1)\left(\frac{\|\sqrt{a}u''\|^2_{L^2(0,1)}}{a(1)(2-K)}+  \|u'\|^2_{L^2(0,1)}\right)\left(\frac{\|\sqrt{a}z''\|^2_{L^2(0,1)}}{a(1)(2-K)}+ \|z'\|^2_{L^2(0,1)}\right)\\
	&\le C \|u\|_{2,a}\|z\|_{2,a}
	\end{aligned}
	\]
	for all $u,z \in K^{2}_{a,0}(0,1)$ and for a positive constant $C$.
	
	As a consequence, by the Lax-Milgram Theorem, there exists a unique solution $u \in K^{2}_{a,0}(0,1)$ of
	\[
	\mathcal{P}(u,z)= G(z),\]
	for all $z\in K^{2}_{a,0}(0,1)$, namely
	\begin{equation}\label{B1}
	\begin{aligned}
			&\int_0^1 au''z''dx +\int_0^1 u z\,dx+(\beta +1)u(1)z(1)+(\gamma +1)u'(1)z'(1)=\\ &\int_0^1(f+g) z\,dx +z(1)f(1)+z'(1)f'(1)
	\end{aligned}
	\end{equation}
	for all $z \in K^{2}_{a,0}(0,1)$.
	
	Now, take $v:= u-f$; then $v \in K^{2}_{a,0}(0,1)$.
	We will prove that $(u,v) \in D(\mathcal B)$ and solves \eqref{B}. As in the proof of Theorem \ref{th generazione}, one can prove that $u \in \mathcal W(0,1)$.
	
	Coming back to (\ref{B1}) and applying the formula of integration by parts (\ref{GGnew}), we obtain
	
\begin{equation}\label{raikou}
	\begin{aligned}
		&	\int_0^1 (au'')'' z\,dx-(au'')'(1)z(1)+[au''z']_{x=0}^{x=1} +(\beta +1)u(1)z(1)+(\gamma +1)u'(1)z'(1)=\\ &\int_0^1(f+g-u) z\,dx +z(1)f(1)+z'(1)f'(1).
	\end{aligned}
	\end{equation}
	Since $(au'')''=(f+g-u)$ a.e. in $(0,1)$, (\ref{raikou}) implies that
	\begin{equation}\label{B2}
		-(au'')'(1)z(1)+(\beta +1)u(1)z(1)=f(1)z(1)\,\,\,\,\,\quad\forall\;z \in K^{2}_{a}(0,1),
	\end{equation}
	\begin{equation}\label{B3}
	(au'')(1)z'(1)+(\gamma +1)u'(1)z'(1)=f'(1)z'(1)\,\,\,\,\,\quad\forall\;z \in K^{2}_{a}(0,1)
\end{equation}
and
\begin{equation}\label{B4}
(au''z')(0) =0\,\,\,\,\,\quad\forall\;z \in K^{2}_{a}(0,1).
\end{equation}
	From \eqref{B2} and \eqref{B3} it follows
	\begin{equation*}
		-(au'')'(1)+(\beta +1)u(1)=f(1)\Longleftrightarrow 	-(au'')'(1)+\beta u(1)=f(1)-u(1)=-v(1)
	\end{equation*}
	and
	\begin{equation*}
	(au'')(1)+(\gamma +1)u'(1)=f'(1)\Longleftrightarrow 	(au'')(1)+\gamma u'(1)=f'(1)-u'(1)=-v'(1)
\end{equation*}
	respectively, i.e. $-(au'')'(1)+\beta u(1)+v(1)=0$ and $(au'')(1)+\gamma u'(1)+v'(1)=0$.
	On the other hand, if $a$ is (WD), then  $u'(0)=0$ and \eqref{B4} is clearly satisfied; if $a$ is (SD), then \eqref{B4} implies $(au'')(0)=0$.
	Hence $(u,v)$ belongs to $D(\mathcal B)$ and  solves \eqref{B}.
\end{proof}

\end{document}